\newtheorem{Thm}{Theorem}[section]
\newtheorem{Lem}[Thm]{Lemma}
\newtheorem{Cor}[Thm]{Corollary}
\newtheorem{Prop}[Thm]{Proposition}
\theoremstyle{definition}
 \newtheorem{Rem}[Thm]{Remark}
\newcommand{\blue}[1]{\textcolor{blue}{#1}}
\newcommand{\FF}{\mathbb{F}}
\newcommand{\GG}{\mathbb{G}}
\newcommand{\NN}{\mathbb{N}}
\newcommand{\RR}{\mathbb{R}}
\newcommand{\ZZ}{\mathbb{Z}}
\newcommand{\Z}{\mathbb{Z}}   
\newcommand{\cT}{{\mathcal T}}
\newcommand{\Del}{\Delta}
\newcommand{\Ome}{\Omega}
\newcommand{\alp}{\alpha}
\newcommand{\bet}{\beta}
\newcommand{\eps}{\varepsilon}
\newcommand{\veps}{\epsilon}
\newcommand{\sig}{\sigma}
\newcommand{\vph}{\varphi}
\newcommand{\df}{\colon}
\newcommand{\diag}{\operatorname{diag}}
\newcommand{\Red}{\operatorname{Red}}
\newcommand{\Loc}{\operatorname{Loc}}
\newcommand{\rk}{\operatorname{rank}}
\newcommand{\rep}{\operatorname{rep}}
\newcommand{\add}{\operatorname{add}}
\newcommand{\gldim}{\operatorname{gl.dim}}
\newcommand{\pdim}{\operatorname{proj.dim}}
\newcommand{\idim}{\operatorname{inj.dim}}
\newcommand{\dimv}{\underline{\dim}}
\newcommand{\rkv}{\underline{\rk}}
\newcommand{\rad}{\operatorname{rad}}
\newcommand{\fac}{\operatorname{fac}}
\newcommand{\Hom}{\operatorname{Hom}}
\newcommand{\Ext}{\operatorname{Ext}}
\newcommand{\End}{\operatorname{End}}
\newcommand{\Aut}{\operatorname{Aut}}
\newcommand{\Ker}{\operatorname{Ker}}
\newcommand{\GL}{\operatorname{GL}}
\newcommand{\Quot}{\operatorname{Frac}}
\newcommand{\bil}[1]{\langle #1\rangle}
\newcommand{\abs}[1]{\left| #1\right|}
\newcommand{\Iff}{\Longleftrightarrow}
\newcommand{\ra}{\rightarrow}
\newcommand{\lcm}{\operatorname{lcm}}
\newcommand{\real}{\mathrm{re}}
\newcommand{\bbo}{\mathbbm{1}}
\newcommand{\lf}{{\mathrm{lf}}}        
\newcommand{\rS}{{\mathrm{rS}}}        
\newcommand{\Mat}{\operatorname{Mat}}  
\newcommand{\NC}{\operatorname{NC}}  
\newcommand{\ualp}{\underline{\alp}}
\newcommand{\tS}{{\widetilde{S}}}
\newcommand{\tH}{{\widetilde{H}}}
\newcommand{\hS}{{\widehat{S}}}
\newcommand{\hH}{{\widehat{H}}}
\newcommand{\hB}{{\widehat{B}}}
\newcommand{\pow}[1]{[\![ #1]\!]} 
\newcommand{\lpw}[1]{(\!( #1 )\!)} 
\newcommand{\ul}[1]{{\underline{#1}}}
\newcommand{\bsm}{\left(\begin{smallmatrix}}
\newcommand{\esm}{\end{smallmatrix}\right)}
\newcommand{\bbsm}{\left[\begin{smallmatrix}}
\newcommand{\besm}{\end{smallmatrix}\right]}
\newcommand{\bit}{\begin{itemize}\itemsep2mm}
\newcommand{\eit}{\end{itemize}}
\begin{document}
\date{29.06.2019}
\parskip10pt

\title{Rigid modules and Schur roots}

\author{Christof Gei{\ss}}
\address{Christof Gei{\ss}, Instituto de Matem\'aticas,
Universidad Nacional Aut\'onoma de M\'exico,
Ciudad Universitaria,
04510 Cd.~de M\'exico,
MEXICO}
\email{christof.geiss@im.unam.mx}

\author{Bernard Leclerc}
\address{Bernard Leclerc,
LMNO, Univ. de Caen,
CNRS, UMR 6139,
F-14032 Caen Cedex,
FRANCE}
\email{bernard.leclerc@unicaen.fr}

\author{Jan Schr\"oer}
\address{Jan Schr\"oer,
Mathematisches Institut,
Universität Bonn,
Endenicher Allee 60,
53115 Bonn,
GERMANY}
\email{schroer@math.uni-bonn.de}


\begin{abstract}
Let $C$ be a symmetrizable generalized Cartan matrix with 
symmetri-zer 
$D$ and orientation $\Ome$. 
In \cite{GLS1} we constructed for any field $\FF$ an $\FF$-algebra 
$H := H_\FF(C,D,\Ome)$, defined in terms of a quiver with relations, such that the locally free $H$-modules behave in many aspects like representations of a hereditary algebra $\tH$ of the corresponding type. 
We define
a Noetherian algebra $\hH$ over a power series ring, which provides
a direct link between the representation theory of $H$ and
of $\tH$.
We define and study a reduction and a localization functor
relating the module categories of $\hH$, $\tH$ and $H$.
These are used to show that there are natural bijections between the
sets of isoclasses of
tilting modules over the three algebras $\hH$, $\tH$ and $H$.
We show that the indecomposable rigid locally free 
$H$-modules are parametrized, via their rank vectors, by the real Schur roots associated to $(C,\Ome)$. 
Moreover, the left finite bricks of $H$, in the sense of Asai, are parametrized, via their dimension vectors, by the real Schur roots associated to $(C^T,\Ome)$.
\end{abstract}

\maketitle
\setcounter{tocdepth}{1}
\tableofcontents


\section{Introduction}\label{sec:intro}


\subsection{Main results}\label{subsec:mainresults}
Let $I=\{1, 2, \ldots, n\}$ and $C\in\ZZ^{I\times I}$ be a generalized
symmetrizable Cartan matrix with 
symmetrizer $D=\diag((c_i)_{i\in I})$ for some $(c_i)_{i\in I}\in\ZZ_{>0}^I$. Let $\Ome\subset I\times I$ be an orientation of $C$ (see \cite[Section 1.4]{GLS1}). 
Without loss of generality we
may assume that $(i,j)\in\Ome$ implies $i<j$ for the natural ordering of~$I$.

Let $\FF$ be a field. In \cite{GLS1} we introduced a finite-dimensional,
1-Iwanaga-Gorenstein algebra $H:=H_\FF(C,D,\Ome)$ in terms of a quiver with
relations, such that the exact category $\rep_\lf(H)$ of locally free $H$-modules
resembles in many aspects the representation theory of a 
finite-dimensional
hereditary algebra of type $(C,\Ome)$. 
The algebras $H$ (and their associated generalized preprojective
algebras, which were also introduced in \cite{GLS1}) provide
a new framework relating the representation theory of symmetrizable Kac-Moody algebras with the representation theory of quivers with relations.
(Such a framework was previously only available for the symmetric case.)

We consider
the Noetherian $\FF\pow{\veps}$-algebra 
\[
\hH:=\varprojlim_k H_\FF(C,kD,\Ome)
\]
together with its canonical homomorphism to $H$ and to the localization
$\tH = \hH_\veps$.
Here $\veps$ is a certain central element of $\hH$, and $\tH$ is a species over the field $\FF\lpw{\veps}$.
In particular, $\tH$ is a finite-dimensional hereditary 
$\FF\lpw{\veps}$-algebra.
The algebras $\hH$, $H$ and $\tH$ are related via the
\emph{reduction functor}
\[
\Red := H \otimes_\hH -\df \rep_\lf(\hH) \to \rep_\lf(H)
\]
and the \emph{localization functor}
\[
\Loc := \tH \otimes_\hH -\df \rep_\lf(\hH) \to \rep(\tH).
\]

It turns out that $\hH$ is an $\FF\pow{\veps}$-order in $\tH$, and the 
$\hH$-lattices are exactly the locally free $\hH$-modules.
The representation theory of orders plays a central role in the
representation theory of finite groups, see for example 
Curtis and Reiner's book \cite{CR81}.
Note that unlike the classical situation, our \emph{ambient algebra} $\tH$
is hereditary and not semisimple. 
Let us also remark that the global dimension of $\hH$ is (with 
the exception of some trivial cases) equal to 2.

The following is our first main result.

\begin{Thm}\label{thm:mainresult1}
The functors
\[
\xymatrix@+3ex{
\rep_\lf(\hH) 
\ar[r]^{\Loc} 
\ar[d]^{\Red} & \rep(\tH)
\\
\rep_\lf(H)
}
\]
induce bijections
\[
\xymatrix{
\{ \text{rigid locally free $\hH$-modules} \}/\!\!\cong \;\;
\ar[r]\ar[d] & \;\;
\{ \text{rigid $\tH$-modules} \}/\!\!\cong 
\\
\{ \text{rigid locally free $H$-modules} \}/\!\!\cong .
}
\]
These bijections and their inverses preserve indecomposability.
Furthermore, they preserve tilting modules and induce isomorphisms
\[
\xymatrix{
\cT(\hH)
\ar[r]\ar[d] & \;\;
\cT(\tH)
\\
\cT(H)
}
\]
between the exchange graphs of support tilting pairs for the
algebras $\hH$, $\tH$ and $H$.
\end{Thm}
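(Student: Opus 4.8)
The plan is to build the correspondence one functor at a time, establishing first that $\Red$ and $\Loc$ each restrict to a bijection on rigid objects, and then gluing. I would begin with $\Loc$. Since $\hH$ is an $\FF\pow{\veps}$-order in the hereditary algebra $\tH$ and locally free $\hH$-modules are exactly the $\hH$-lattices, the functor $\Loc = \tH\otimes_\hH -$ sends a lattice $M$ to $\tH\otimes_\hH M$, and the key computation is $\Ext^1_\hH(M,M)\otimes_{\FF\pow{\veps}}\FF\lpw{\veps} \cong \Ext^1_{\tH}(\Loc(M),\Loc(M))$; because $M$ is a lattice this $\Ext^1$ is finitely generated over $\FF\pow{\veps}$, hence vanishes iff it vanishes after inverting $\veps$. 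So $M$ is rigid iff $\Loc(M)$ is rigid. For surjectivity I would show every $\tH$-module contains a full $\hH$-lattice (standard for orders) and, among such lattices, a rigid one can be chosen; for injectivity one uses that two lattices with isomorphic localizations and both rigid must already be isomorphic over $\hH$ — here I would invoke a Krull–Schmidt/lifting argument over the complete local ring $\FF\pow{\veps}$, together with the fact (stated in the excerpt, $\gldim\hH = 2$) controlling deformations. Indecomposability is preserved because $\End_\hH(M)$ is a complete local ring iff $\End_{\tH}(\Loc M)$ is, both being detected after $\otimes\FF\lpw{\veps}$ on the finitely generated module $\End_\hH(M)$.

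Next I would treat $\Red = H\otimes_\hH -$. The relation $H = H_\FF(C,D,\Ome)$ arises from $\hH = \varprojlim_k H_\FF(C,kD,\Ome)$ by reducing modulo $\veps$ (this is how $H$ and the canonical map $\hH\to H$ are set up in the excerpt), so $\Red$ is "base change mod $\veps$". For a locally free $\hH$-module $M$, I expect $\Red(M)$ to be locally free over $H$ and, crucially, $\Ext^1_\hH(M,M)\otimes_\hH H \twoheadrightarrow \Ext^1_H(\Red M,\Red M)$ with kernel controlled by $\Tor$ and $\Ext^2$; using $\gldim\hH\le 2$ and Nakayama, rigidity of $M$ should force rigidity of $\Red(M)$, and conversely a lift argument (deforming a rigid $H$-module to $\hH$, obstruction in $\Ext^2$) gives that every rigid locally free $H$-module is $\Red$ of a rigid locally free $\hH$-module. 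Injectivity again uses that rigid lifts are unique up to isomorphism over the complete ring. Indecomposability transfers by the usual idempotent-lifting along the nilpotent-ish kernel of $\hH\to H$.

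With both legs of the triangle understood, the composite bijection $\Loc\circ\Red^{-1}\colon \{\text{rigid l.f. }H\}\to\{\text{rigid }\tH\}$ is forced, and it automatically preserves indecomposability. To get tilting modules I would characterize tilting inside "rigid" purely numerically: a rigid locally free module is tilting iff it has the maximal number of non-isomorphic indecomposable summands, equivalently iff its rank vector spans. Since $\Red$ and $\Loc$ preserve rank/dimension vectors up to the canonical identifications of Grothendieck groups of $\rep_\lf$ (this compatibility is implicit in the construction and should be recorded as a lemma), the "maximal number of summands" condition is preserved, so tilting corresponds to tilting; support tilting pairs follow by applying the same to the relevant idempotent quotients / restricting the supports. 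Finally, compatibility with the exchange graphs: mutation of a support tilting pair is defined by the combinatorics of summands and approximations, all of which commute with the exact functors $\Red$, $\Loc$ on the rigid locus (approximations exist and are preserved because these functors are exact and send the relevant $\add$-subcategories to each other), so the bijections are graph isomorphisms.

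\textbf{Main obstacle.} I expect the genuinely hard step to be the surjectivity-and-uniqueness of \emph{lifts} — that every rigid object downstairs (over $\tH$, resp.\ over $H$) lifts \emph{uniquely} to a rigid locally free $\hH$-module. The uniqueness uses rigidity to kill the obstruction/ambiguity space, but one must check the obstruction really lives where $\gldim\hH\le 2$ and the lattice hypotheses make it vanish, and one must ensure the lift is again \emph{locally free}, not merely a module — this is exactly the subtlety that makes the $H$-side (a finite-dimensional, merely $1$-Gorenstein algebra) harder than the classical order situation where the ambient algebra is semisimple.
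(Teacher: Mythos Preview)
Your overall architecture is reasonable, but there is one outright error and one genuine gap where the paper's key idea is missing.

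The error is in your treatment of $\Loc$. You claim that for a lattice $M$ the group $\Ext^1_\hH(M,M)$ ``vanishes iff it vanishes after inverting $\veps$'', and hence that $M$ is rigid iff $\Loc(M)$ is. This is false: the $\Ext^1$ is finitely generated over $R=\FF\pow{\veps}$ but need not be free; it can be pure $\veps$-torsion and hence killed by localization without being zero. The paper gives an explicit counterexample (Section~\ref{subsec:example}): already for type $A_2$ with trivial symmetrizer, the rank-$(1,1)$ lattice $M(1)$ with structure map $\veps_1$ has $\dim_\FF\Ext^1_\hH(M(1),M(1))=1$, yet $\Loc(M(1))\cong\tH e_2$ is projective. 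Only the forward implication holds (Corollary~\ref{cor:Locrigid}), and that is all the paper uses.

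The real gap is surjectivity of $\Loc$ on rigid modules. Your line ``among such lattices, a rigid one can be chosen'' is exactly the hard step, and you give no mechanism for it. The paper does \emph{not} construct a rigid lattice in a given $\tH$-module directly. Instead it reverses your order of attack: it first establishes the $\Red$ bijection completely (Proposition~\ref{prop:Red-bij}), the key input being that rigid locally free modules over both $H$ and $\hH$ are determined up to isomorphism by their rank vectors (Remark~\ref{rem:runique}, Proposition~\ref{prop:hH-uniqueb}). It then invokes Demonet's Lemma~\ref{lem:demonet} (every $\tau$-rigid $H$-module is locally free) to identify support tilting pairs for $H$ with support $\tau$-tilting pairs, so that $\cT(H)$, and hence $\cT(\hH)$, is $|I|$-regular by \cite{AIR14}. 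Now $\Loc$ gives an \emph{injective} map of graphs $\cT(\hH)\hookrightarrow\cT(\tH)$; since $\cT(\tH)$ is connected and $|I|$-regular by \cite{Hu11}, an injective graph map between regular graphs of the same valency with connected target is automatically surjective. Surjectivity of $\Loc$ on rigid modules then follows a posteriori (Corollary~\ref{cor:Loc-bij}): every basic rigid $\tH$-module is a summand of a basic tilting module, and tilting modules now lift. This exchange-graph connectedness argument, powered by Demonet's Lemma, is precisely the lever missing from your plan.
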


By \cite{CK06} for the symmetric and \cite{Ru15} (which is based on \cite{Hu}) for the symmetrizable case, the exchange graph $\cT(\tH)$ is a categorical
realization of the exchange graph of an acyclic cluster algebra
of type $(C,\Omega)$ (here $(C,\Omega)$ encodes an acyclic valued quiver).
Thus Theorem~\ref{thm:mainresult1} provides a new
class of such categorical realizations.

For the proof of Theorem~\ref{thm:mainresult1} we use 
Demonet's Lemma~\ref{lem:demonet} which states that $\tau$-tilting $H$-modules are actually classical
(in particular locally free) tilting modules. 
Our result is similar to Crawley-Boevey's classification of the rigid integral representations of a quiver \cite{CB96}. However, the approach of \cite{CB96} is based on exceptional sequences and 
does not seem to work in our case.

For $H = H_\FF(C,D,\Ome)$ as above, the homological
bilinear form descends to the Grothendieck group $K_0(\rep_\lf(H))\cong\ZZ^I$,
see \cite[Section~4]{GLS1}, giving it the structure of a generalized Cartan
lattice 
\[
L(C,D,\Ome):=\left(\ZZ^I,\bil{-,-},\ualp\right)
\]
(in the sense of \cite{HuK16})
with orthogonal exceptional sequence
$\ualp=(\alp_1,\alp_2,\ldots,\alp_n)$.
Here, $(\alp_i)_{i\in I}$ is the standard
coordinate basis of $\ZZ^I$ and $\alp_i$ corresponds to the class of the
generalized simple $E_i\in\rep_\lf(H)$. 
The corresponding simple $H$-module is denoted by $S_i$.
We refer to \cite{GLS1} for the definition of $E_i$.
In this context we write $\rkv_H(M)$
for the class of a locally free $H$-module $M$ in the above Grothendieck group.
It is easy to see that 
each generalized Cartan lattice is isomorphic to some $L(C,D,\Ome)$.

The generalized Cartan lattice $L(C,D,\Ome)$ comes with its Weyl group 
\[W=W(C)<\Aut(\ZZ^I),\]
generated by the simple reflections
$s_i=s_{\alp_i}$ with $i\in I$, and a Coxeter element $s_1s_2\cdots s_n\in W$ compatible with the orientation $\Omega$. We can thus introduce the set of real roots
\[
\Del_\real=\Del_\real(C)
:= \bigcup_{i\in I} W\alpha_i
\subset \ZZ^I
\]
and the poset of non-crossing partitions
\[
  \NC(C,\Ome):=\{w\in W\mid 1\leq w\leq s_1s_2\cdots s_n\},
\]
where $\leq$ denotes the \emph{absolute order} on $W$ (see below, Section~\ref{sec:schurroots}). 
By a slight abuse of notation
we define the set of \emph{real Schur roots} as
\[
  \Del_\rS(C,\Ome):=\{\alp\in\Del_\real^+\mid s_\alp\in\NC(C,\Ome)\}.
\]
We can now state our second main result.

\begin{Thm}\label{thm:mainresult2}
For $H = H_\FF(C,D,\Ome)$ the following hold:
\bit

\item[(a)]
$M\mapsto \rkv_H(M)$ induces a bijection 
\[
\xymatrix{
\{ \text{indecomposable rigid locally free $H$-modules} \}/\!\!\cong 
\;\;\ar[r] &\;\; \Del_\rS(C,\Ome).  
}
\]

\item[(b)]
If $M\in\rep_\lf(H)$ is indecomposable rigid, then
\[
\End_H(M)\cong\FF[\veps]/(\veps^{c_i})
\] 
where $c_i=\bil{\rkv_H(M),\rkv_H(M)}$ for some  $i\in I$. 
Moreover, $M$ is free as an $\End_H(M)$-module.

\item[(c)]
$N\mapsto \dimv(N)$ induces a bijection 
\[
\xymatrix{
\{ \text{left finite bricks in $\rep(H)$} \}/\!\!\cong 
\;\;\ar[r] &\;\; \Del_\rS(C^T,\Ome).  
}
\]

\eit
\end{Thm}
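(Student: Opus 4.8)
The plan is to transport everything to the hereditary algebra $\tH$ via the bijections of Theorem~\ref{thm:mainresult1}, where the corresponding statements are already known, and then read off the combinatorial descriptions. First I would prove part~(a). By Theorem~\ref{thm:mainresult1} the composition $\Loc\circ\Red^{-1}$ (or more precisely, the two induced bijections) gives a bijection between isoclasses of indecomposable rigid locally free $H$-modules and isoclasses of indecomposable rigid $\tH$-modules, i.e.\ exceptional $\tH$-modules. Since $\tH$ is a finite-dimensional hereditary algebra over the field $\FF\lpw{\veps}$ of type $(C,\Ome)$, its indecomposable rigid (= exceptional) modules are classified, via their dimension vectors, by the real Schur roots $\Del_\rS(C,\Ome)$; this is the classical result of Crawley-Boevey/Ringel in the symmetric case and its species analogue in general (see the references around $\cT(\tH)$ in the text). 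The one technical point to nail down is that $\Red$ and $\Loc$ are compatible with rank/dimension vectors in the appropriate sense: one must check that $\rkv_H(\Red(M)) = \dimv_{\tH}(\Loc(M))$ under the identification $K_0(\rep_\lf(\hH)) \cong \ZZ^I \cong K_0(\rep(\tH))$, and that this identification matches the generalized Cartan lattice $L(C,D,\Ome)$ with the root lattice of $\tH$ as lattices with orthogonal exceptional sequence. Granting this, (a) follows immediately by composing bijections.

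For part~(b), let $M\in\rep_\lf(H)$ be indecomposable rigid, and let $N := \Loc(\Red^{-1}(M))$ be the corresponding exceptional $\tH$-module. Over a hereditary algebra, the endomorphism ring of an exceptional module is a division ring, and for a species of type $(C,\Ome)$ it is one of the division rings attached to a vertex; concretely $\End_{\tH}(N) \cong \FF\lpw{\veps}[\veps^{1/c_i}]$-type object, of $\FF\lpw{\veps}$-dimension $c_i$ where $c_i = \bil{\dimv(N),\dimv(N)}$ (the Euler form evaluates to $\dim_{\FF\lpw{\veps}}\End_{\tH}(N)$ on an exceptional module). I would then argue that $\End_H(M)$ is obtained from $\End_\hH(\Red^{-1}(M))$ by the reduction functor, and that $\End_\hH(\widehat{M})$ is an $\FF\pow{\veps}$-order inside $\End_{\tH}(N)$; being a rigid lattice with local endomorphism ring, $\End_\hH(\widehat{M})$ is the maximal order $\FF\pow{\veps}[\veps^{1/c_i}]$, and applying $-\otimes_\hH H$ yields $\End_H(M)\cong \FF[\veps]/(\veps^{c_i})$. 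The freeness of $M$ over $\End_H(M)$ should follow from the definition of locally free modules together with the fact that the reduction of a lattice over a (commutative, local) order is free when the order is maximal; since $c_i = \bil{\rkv_H(M),\rkv_H(M)}$ by the compatibility established in (a), this gives (b).

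Part~(c) is the dual statement. The key input is Demonet's Lemma~\ref{lem:demonet} together with the general theory of $\tau$-tilting and the description, due to Asai, of left finite bricks in terms of functorially finite torsion classes: left finite bricks of $\rep(H)$ correspond bijectively, via the brick labelling of the Hasse arrows of the lattice of torsion classes, to join-irreducible (equivalently, to arrows out of) torsion classes, hence — via Theorem~\ref{thm:mainresult1} and the identification of $\cT(H)$ with $\cT(\tH)$ — to left finite bricks of $\tH$. Over the hereditary algebra $\tH$ (of type $(C,\Ome)$), left finite bricks are exactly the exceptional modules, classified by $\Del_\rS(C,\Ome)$ via dimension vectors; but the transpose enters because the dimension vector of a brick is computed using the \emph{opposite} convention to the rank vector — more precisely, $\dimv$ and $\rkv$ are related by the Cartan matrix $D$, and the combinatorics of bricks over $\tH$ in the $\dimv$-coordinates is governed by $\NC(C^T,\Ome)$ rather than $\NC(C,\Ome)$. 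So the statement becomes: left finite bricks of $H$ $\leftrightarrow$ left finite bricks of $\tH$ $\leftrightarrow$ $\Del_\rS(C^T,\Ome)$, where the last bijection is the $\dimv$ (as opposed to $\rkv$) parametrization for the hereditary algebra.

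The main obstacle I expect is \emph{not} the transport of structure through Theorem~\ref{thm:mainresult1} — that is essentially formal once the functors are in hand — but rather the bookkeeping of the two dual lattices $L(C,D,\Ome)$ and $L(C^T,D,\Ome)$ and the precise relationship between rank vectors $\rkv_H$ and dimension vectors $\dimv$ under $\Red$, $\Loc$, and the passage $H \leftrightarrow H^{\op}$ (or $C \leftrightarrow C^T$). In particular, justifying carefully why part~(a) produces $\Del_\rS(C,\Ome)$ while part~(c) produces $\Del_\rS(C^T,\Ome)$ — i.e.\ pinning down which Grothendieck-group identification and which Euler form is in play on each side, and checking that the real-Schur-root condition $s_\alp \in \NC(-,\Ome)$ matches the exceptionality condition on the hereditary side — is where the real content lies. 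The endomorphism-ring computation in (b) is then a local calculation with orders over the DVR $\FF\pow{\veps}$, which should be routine given the maximal-order characterization.
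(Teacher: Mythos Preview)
Your strategy for part~(a) is essentially the paper's: transport via $\Red^{-1}$ and $\Loc$ to $\tH$ and invoke Crawley-Boevey/Ringel, checking that $\rkv_H$, $\rkv_\hH$, and $\dimv_\tH$ agree under the functors (this is Lemma~\ref{lem:functors1}). That part is fine.

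The real gap is in part~(b). You assert that $\End_\hH(\widehat{M})$, being a local $R$-order inside $\End_\tH(N)\cong\FF\lpw{\veps_i}$, must be the \emph{maximal} order $\FF\pow{\veps_i}$. This is false in general: for $c_i=2$, the ring $R[\veps^{3/2}]=\FF\pow{\veps}\oplus\FF\pow{\veps}\veps^{3/2}$ is a local $R$-order of rank $2$ in $\FF\lpw{\veps^{1/2}}$ which is not maximal (it is not even regular). Nothing you have said about rigidity rules this out. The paper's argument here is the substantive step of the whole theorem: one completes $\widehat{M}$ to a tilting module $\widehat{M}\oplus N$ using a Bongartz complement on the $\tH$-side with $\Hom_\tH(\Loc(\widehat{M}),\Loc(N))=0$, then uses the tilting bound $\gldim(\End_\hH(\widehat{M}\oplus N))\le\gldim(\hH)+1\le 3$ and the vanishing $\Hom_\hH(\widehat{M},N)=0$ to deduce $\gldim(\End_\hH(\widehat{M}))<\infty$. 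Since $E:=\End_\hH(\widehat{M})$ is a complete local commutative Noetherian ring of Krull dimension $1$, finite global dimension forces $E$ to be regular, hence $E\cong\GG\pow{\eta}$; a separate field-theoretic argument then shows $\GG=\FF$. You should not expect this to be ``routine given the maximal-order characterization'': the global-dimension/regularity step is the crux.

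For part~(c), your route via brick labels on the Hasse quiver and the isomorphism $\cT(H)\cong\cT(\tH)$ is conceptually reasonable but does not explain the appearance of $C^T$ correctly. Over $\tH$, exceptional modules have dimension vectors in $\Del_\rS(C,\Ome)$, not $\Del_\rS(C^T,\Ome)$; there is no ``opposite convention'' switching $C$ to $C^T$ on the hereditary side. The paper instead uses~(b) directly: for indecomposable rigid $M$ with $E=\End_H(M)\cong\FF[\veps]/(\veps^{c_i})$ and $M$ free over $E$, the DIJ brick is $M/\rad_E(M)$, whose $j$-th coordinate is $\dim_\FF(M_j)/c_i = (c_j/c_i)\,r_j$ where $r_j$ is the $j$-th coordinate of $\rkv_H(M)$. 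A short computation then shows $\dimv(M/\rad_E(M))$ is exactly the scaled coroot $\widetilde{\beta}$ corresponding to $\beta=\rkv_H(M)$, which lies in $\Del_\rS(C^T,\Ome)$. So the transpose enters through the explicit formula $d_j=(c_j/c_i)r_j$, which in turn rests on the freeness established in~(b); your proposed transport argument bypasses precisely the information needed to see this.
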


If $A$ is a finite-dimensional hereditary algebra it is easy to see that the
Grothen-dieck group $K_0(A)$ of finite-dimensional $A$-modules equipped with
the Euler bilinear form, and an adequate ordering of the classes of the
simple modules form a generalized Cartan lattice together with an orthogonal
exceptional sequence of the above form. We say then, that $A$ is of type
$(C,\Ome)$. In this setup the class of a module
is identified with its dimension vector. By results of Crawley-Boevey \cite{CB93} and Ringel \cite{Rin94}, this correspondence induces a bijection between the isoclasses
of indecomposable rigid $A$-modules and the set $\Del_\rS(C,\Ome)$, see for
example \cite[Corollary~4.8]{HuK16}. However, in this situation the endomorphism
ring of any indecomposable rigid $A$-module is isomorphic to the endomorphism
ring of some simple $A$-module, which by Schur's lemma is a finite-dimensional division algebra.
This led to the name \emph{Schur root} for the above mentioned class of positive
real roots. 

Theorem~\ref{thm:mainresult1} is used for the proof
of Theorem~\ref{thm:mainresult2}(a),(b).
We will see in Section~\ref{subsec:pf-c} that  Theorem~\ref{thm:mainresult2}(c) is
an almost formal consequence of parts (a) and (b) in view
of \cite[Theorem~4.1]{DIJ17} and Demonet's Lemma~\ref{lem:demonet}.

In case $C$ is a Cartan matrix of finite type, Theorem~\ref{thm:mainresult2} has a relatively easy proof based
on results in \cite{GLS1}.
Namely, in the finite type case,
all positive roots are real Schur roots and part (a)
is \cite[Theorem~1.3]{GLS1}. Part (b) can be deduced from the results
in \cite{GLS1} by elementary Auslander-Reiten and tilting theory.

\subsection{Structure of this article}
Section~\ref{sec:schurroots} recalls some definitions and basic
facts on real Schur roots.
In
Section~\ref{sec:defH} we recall the definition and some
basic properties of the algebras
$H = H_\FF(C,D,\Ome)$.
The species $\tH$ and the Noetherian algebra $\hH$ 
are defined and studied in Section~\ref{sec:species}.
There we also explain that $\hH$ is an $\FF\pow{\veps}$-order
in $\tH$.
Section~\ref{sec:redloc} deals with a reduction functor
$\rep_\lf(\tH) \to \rep_\lf(H)$ and a localization functor
$\rep_\lf(\hH) \to \rep(\tH)$.
We show that rigid locally
free $H$-modules are up to isomorphism determined by their rank vectors.
The same section 
includes the proof of the vertical bijection in Theorem~\ref{thm:mainresult1}.
We also show that the horizontal map in Theorem~\ref{thm:mainresult1} is injective.
The main result of
Section~\ref{sec:tilting} are isomorphisms of the 
exchange graphs of tilting modules over the three 
algebras $H$, $\hH$ and $\tH$.
Section~\ref{subsec:proof2} finishes the proofs of 
Theorem~\ref{thm:mainresult1} and
Theorem~\ref{thm:mainresult2}(a).
Theorem~\ref{thm:mainresult2}(b) is proved in
Section~\ref{subsec:proof3}.
As an easy consequence of (a) and (b), Theorem~\ref{thm:mainresult2}(c) is shown in Section~\ref{subsec:pf-c}.
Finally,
Section~\ref{sec:examples} contains some examples.

\subsection{Conventions}
For an algebra $A$ let $\rep(A)$ denote the category
of finitely generated left $A$-modules.
If not indicated otherwise, by an $A$-\emph{module} we
mean a module in $\rep(A)$.
By a \emph{subcategory} of $\rep(A)$ we always mean a full
additive subcategory.
For $M \in \rep(A)$ let $\add(M)$ be the subcategory
of $\rep(A)$ whose objects are isomorphic to the direct summands of finite direct sums
of copies of $M$.
The category $\rep(A)$ has the 
\emph{Krull-Remak-Schmidt property} if each $M \in \rep(A)$ has
a direct sum decomposition
$M = M_1 \oplus \cdots \oplus M_t$
such that $\End_A(M_i)$ is local for all $i$.
As a consequence, the modules $M_i$
in this decomposition
are uniquely determined up to isomorphism and reordering.
Let $|M|$ be the number of 
isomorphism classes of indecomposable summands appearing in
such a direct sum decomposition.
Then $M$ is called \emph{basic} if $|M| = t$.
Finally, an $A$-module $M$ is \emph{rigid} if $\Ext_A^1(M,M) = 0$.


\section{Schur roots}
\label{sec:schurroots}
%
Following \cite[Section~4]{GLS1}, the bilinear form
\[
\bil{-,-}\df\ZZ^I\times\ZZ^I\ra\ZZ
\]
of the generalized Cartan lattice $L(C,D,\Ome)$ mentioned in
Section~\ref{subsec:mainresults} is defined by
\[
 \bil{\alp_i,\alp_j}= \begin{cases} 
   c_i        &\text{if } i=j,\\
   c_i c_{ij} &\text{if } i<j,\\
   0          &\text{otherwise}. \end{cases}
\]
Here $(\alpha_i)_{i\in I}$ denotes the standard basis of $\Z^n$.
We consider the symmetrization 
\[
(-,-)\df \RR^I \times \RR^I \to \RR
\]
of $\bil{-,-}$, which is defined by
\[
(\alp_i,\alp_j):=\bil{\alp_i,\alp_j}+\bil{\alp_j,\alp_i}=\alp_i^T D C\alp_j.
\]
For each $\bet\in\RR^I$ with $(\bet,\bet)\neq 0$ we define 
a reflection
$s_\bet\in\GL(\RR^I)$ by
\[
s_\bet(u) := u-\frac{2(\bet,u)}{(\bet,\bet)}\bet \qquad (u \in \RR^I).
\]
Observe that $s_\bet=s_{t\bet}$ for all $t\in\RR \setminus \{0\}$. 
Let us abbreviate
$s_i:=s_{\alp_i}$ $(i\in I)$ for the simple reflections, and observe that each $s_i$ induces an automorphism of the lattice $\ZZ^I$ since
\[
s_i(\alp_j)= \alp_j -c_{ij}\alp_i \qquad (i,j\in I).
\]
Then $W=W(C)$ is the subgroup of $\GL(\RR^I)$ which is generated by the
simple reflections, and the set of
\emph{real roots} is 
\[
  \Del_\real(C):=\bigcup_{i\in I} W\alp_i \subset \Z^I.
\]
With 
\[
c := \lcm((c_i)_{i\in I})
\] 
we observe that $D':=\diag((c/c_i)_{i\in I})$ is a symmetrizer of the transposed
Cartan matrix $C^T$.
To each real root $\beta = w(\alp_j)$ is associated the 
\emph{real coroot}
\[
\beta^\vee :=\frac{2\bet}{(\bet,\bet)} = w(\alp^\vee_j). 
\]
Note that each $w\in W$ defines an automorphism of the lattice spanned by the
simple coroots since
\[
s_i(\alp_j^\vee) = \alp_j^\vee - c_{ji}\alp_i^\vee \qquad (i,j\in I).
\]
Thus we have in particular 
\[
\bet^\vee \in \bigoplus_{i\in I} \ZZ\alp_i^\vee 
\quad\text{and}\quad
s_\bet = s_{\bet^\vee} = w^{-1} s_j w \in W.
\]
For $\beta \in \Delta_\real(C)$ we define the \emph{scaled real
coroot}
\[
\widetilde{\bet} := \sqrt{c} \beta^\vee.
\]
Then we have
\[
(\widetilde{\alp}_i,\widetilde{\alp}_j) = \frac{c}{c_i}c_{ji}.
\]
The scaled simple coroots 
$\widetilde{\alp}_i$ can be regarded  as the simple roots of the dual root system 
associated with the generalized Cartan matrix $C^T$. 
This allows us to identify
the set $\Delta_\real(C^T)$ of dual real roots  with the set
$\{ \widetilde{\bet} \mid \beta \in \Del_\real(C) \}$. 
Clearly, we have here
$W(C) = W(C^T)$ as a Coxeter group. 
For
\[
\beta = \sum_{i \in I} b_i \alpha_i\in \Delta_\real(C)\subset\ZZ^I,
\]
it is straightforward that
\[
\widetilde{\bet} = \sum_{i \in I} \frac{2c_ib_i}{(\beta,\beta)} \widetilde{\alpha}_i
\in\bigoplus_{i\in I}\ZZ\widetilde{\alp}_i.
\]

The \emph{absolute length} $l(w)$ of $w\in W$ is the minimal $r\geq 0$ such
that $w$ can be written as a product of reflections
\[
w=s_{\bet_1}s_{\bet_2}\cdots s_{\bet_r} \mbox{ with } \bet_i\in\Del_\real(C).
\]
The 
\emph{absolute order} on $W$ is defined as
\[
  u\leq v \Iff l(u)+l(u^{-1}v)=l(v).
\] 
Following \cite{HuK16}, we can now define the set of \emph{real Schur roots} as
\[
\Del_\rS(C,\Ome) := \{\beta \in \Del_\real(C) \mid s_\beta \le s_1s_2\cdots s_n\}. 
\]

By the above discussion we can identify the Cartan lattice $L(C^T,D',\Ome)$
with $\oplus_{i\in I}\ZZ\widetilde{\alp}_i$, and 
the map $\beta \mapsto \widetilde{\bet}$ is a bijection 
$\Del_\real(C) \to \Del_\real(C^T)$ which restricts to a bijection
$\Del_\rS(C,\Ome) \to \Del_\rS(C^T,\Ome)$.

\begin{Rem}
A \emph{complete real exceptional sequence} is a sequence $(\bet_1,\bet_2,\ldots,\bet_n)$ of
real roots  such that
$\bil{\bet_i,\bet_j}=0$ for $i>j$.

Recall that the \emph{braid group} $B_n$ is defined by generators $\sigma_1,\ldots,\sigma_{n-1}$ with relations $\sigma_i\sigma_j = \sigma_j\sigma_i$
for $|i-j| \ge 2$ and $\sigma_i\sigma_{i+1}\sigma_i = \sigma_{i+1}\sigma_i\sigma_{i+1}$ for $1 \le i \le n-2$.

The braid group $B_n$ acts on the
set of complete real exceptional sequences via
\[
  \sig_i(\bet_1,\cdots,\bet_n)= (\bet_1,\ldots,\bet_{i-1},s_{\bet_i}(\bet_{i+1}),\bet_i,\bet_{i+2},\ldots,\bet_n).
\]
In fact, the semidirect product $\{\pm 1\}^n\rtimes B_n$ of the sign group
with the braid group acts transitively on the set of all complete real
exceptional sequences.  The set of real Schur roots $\Del_\rS(C,\Ome)$
can be described alternatively as the set of positive roots which appear
in some complete real exceptional sequence, see \cite{HuK16}.
\end{Rem}


\section{Algebras associated with Cartan matrices} \label{sec:defH}


\subsection{Combinatorics of symmetrizable Cartan matrices}
Let $c := \lcm((c_i)_{i\in I})$ and recall the following notations
from \cite[Section~1.4]{GLS1}: For $(i,j)\in I\times I$ with $c_{ij}<0$ we define
natural numbers
\[
  g_{ij}:=\gcd(-c_{ij},-c_{ji}),\qquad f_{ij}:=-c_{ij}/g_{ij},\qquad
  k_{ij}:=\gcd(c_i,c_j),\quad l_{ij}=\lcm(c_i,c_j).
\]    
Thus we have
\[
  g_{ij}=g_{ji},\qquad k_{ij}=k_{ji},\qquad c_i=k_{ij}f_{ji},\qquad
  l_{ij}=c_ic_j/k_{ij}.
\]  
For later use we record the following elementary facts:
\begin{itemize}

\item
For $a,b \in \ZZ_{>0}$ we have
\[
\ZZ a+\ZZ b=\ZZ\gcd(a,b)
\text{\quad and \quad}
\NN a + \NN b \supset
\left(\NN + \frac{\lcm(a,b)}{\gcd(a,b)}\right)\gcd(a,b).
\]

\item
On the other hand,
\[
\lcm(\frac{c}{c_i},\frac{c}{c_j}) = \frac{c}{\gcd(c_i,c_j)}
\text{\quad and \quad}
\gcd(\frac{c}{c_i},\frac{c}{c_j}) = \frac{c}{\lcm(c_i,c_j)}.
\]

\end{itemize} 
We deduce immediately the following result:

\begin{Lem} \label{lem:elem}
With the above notation we have
\[
\ZZ\frac{1}{c_i}+\ZZ\frac{1}{c_j}=\ZZ\frac{1}{\lcm(c_i,c_j)}
\text{\quad and \quad}
\NN\frac{1}{c_i} + \NN\frac{1}{c_j} \supset
\left(\NN + 
\frac{\lcm(c_i,c_j)}{\gcd(c_i,c_j)}\right) \frac{1}{\lcm(c_i,c_j)}.
\]
\end{Lem}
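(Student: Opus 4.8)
The plan is to deduce both assertions directly from the two elementary facts recorded immediately before the lemma, applying them with $a = c/c_i$ and $b = c/c_j$ and then rescaling by $c$. Since $c = \lcm((c_i)_{i\in I})$, the integers $c/c_i$ and $c/c_j$ are positive, so the first elementary fact applies to the pair $(c/c_i,\,c/c_j)$; the $\gcd$ and $\lcm$ that appear can then be rewritten in terms of $\gcd(c_i,c_j)$ and $\lcm(c_i,c_j)$ using the identities in the second elementary fact.

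Carrying this out: multiplication by $c$ identifies $\ZZ\frac{1}{c_i} + \ZZ\frac{1}{c_j}$ with $\frac{1}{c}\bigl(\ZZ\frac{c}{c_i} + \ZZ\frac{c}{c_j}\bigr)$, and likewise for the submonoids generated over $\NN$. For the first identity, the first elementary fact gives $\ZZ\frac{c}{c_i} + \ZZ\frac{c}{c_j} = \ZZ\,\gcd(c/c_i,\,c/c_j)$, and $\gcd(c/c_i,\,c/c_j) = c/\lcm(c_i,c_j)$ by the second elementary fact; dividing by $c$ yields $\ZZ\frac{1}{c_i} + \ZZ\frac{1}{c_j} = \ZZ\frac{1}{\lcm(c_i,c_j)}$. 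For the inclusion, the first elementary fact gives $\NN\frac{c}{c_i} + \NN\frac{c}{c_j} \supset \bigl(\NN + \tfrac{\lcm(c/c_i,\,c/c_j)}{\gcd(c/c_i,\,c/c_j)}\bigr)\gcd(c/c_i,\,c/c_j)$; by the second elementary fact the ratio in parentheses equals $\lcm(c_i,c_j)/\gcd(c_i,c_j)$ and the trailing factor equals $c/\lcm(c_i,c_j)$, so dividing by $c$ gives exactly the claimed inclusion $\NN\frac{1}{c_i} + \NN\frac{1}{c_j} \supset \bigl(\NN + \tfrac{\lcm(c_i,c_j)}{\gcd(c_i,c_j)}\bigr)\frac{1}{\lcm(c_i,c_j)}$.

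If one prefers to be self-contained rather than quoting the first elementary fact, that fact is itself Bézout's identity for the $\ZZ$-span together with the classical Sylvester--Frobenius bound for the $\NN$-span: writing $g = \gcd(a,b)$, $a = g a'$, $b = g b'$ with $\gcd(a',b') = 1$, one has $\lcm(a,b)/\gcd(a,b) = a'b'$, and every integer $\geq a'b'$ is a non-negative integer combination of $a'$ and $b'$ since the Frobenius number of the coprime pair $(a',b')$ equals $a'b' - a' - b'$. Apart from this (granted) input there is no genuine obstacle; the only points demanding care are the divisor-lattice duality $x \mapsto c/x$ underlying the second elementary fact, which is valid precisely because $c_i$ and $c_j$ divide $c$, and the consistent use of the convention $0 \in \NN$, which is what makes rescaling by $c$ a bijection of the submonoids in question.
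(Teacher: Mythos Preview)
Your argument is correct and is exactly the deduction the paper has in mind: the paper states the two bullet-point facts and then says ``We deduce immediately the following result,'' leaving the rescaling by $c$ and the substitution $a=c/c_i$, $b=c/c_j$ implicit. You have simply spelled out those implicit steps (and optionally justified the Sylvester--Frobenius input), so there is nothing to add.
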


\subsection{The algebras $H_\FF(C,D,\Omega)$}
We set 
\[
H_i := \FF[\eps_i]/(\eps_i^{c_i})
\] 
for $i\in I$. 
If $(i,j)\in\Ome$
we define the cyclic $H_i$-$H_j$-bimodule ${_iH'_j}$ with generator
$\alp_{ij}$ by the relation 
\[
\eps_i^{f_{ji}}\alp_{ij}=\alp_{ij}\eps_j^{f_{ij}}.
\]
Thus ${_iH'_j}$ is free of rank $f_{ij}$ as a left $H_i$-module and free of
rank $f_{ji}$ as a right $H_j$-module. 
Then we set 
\[
{_iH_j}:=({_iH'_j})^{g_{ij}}
\]
and define
\[
  S:=\prod_{i\in I} H_i\qquad\text{and}\qquad
  B:=\bigoplus_{(i,j)\in\Ome} {_iH_j}
\]
Note that $B$ is naturally an $S\text{-}S$-bimodule. So we can 
define the tensor
algebra
\[
  H := H_\FF(C,D,\Ome) := T_S(B).
\]  
We introduced and studied the algebras $H$ in \cite{GLS1}.
More precisely, we defined them there via quivers with relations and then proved that they are isomorphic to $T_S(B)$, see
\cite[Proposition~6.4]{GLS1}.

Thus, an $H$-module $M$ can be described as a tuple
\[
  (\ul{M}, (M_{ij})_{(i,j)\in\Ome}) \text{ with }
    \ul{M}=(M_i)_{i\in I}\in\rep(S),\text{ and }
    M_{ij}\in\Hom_{H_i}({_iH_j}\otimes_{H_j} M_j, M_i).
\]
We say that $M$ is \emph{locally free} if $\ul{M}$ is a projective
$S$-module, or equivalently if each $M_i$ is a free $H_i$-module. In this case we write
\[
\rkv_H(M):=(\rk_{H_i}(M_i))_{i\in I}.
\]  
Note that for example if $(i,j), (j,k)\in\Ome$, then 
${_iH_j}\otimes_{H_j}{_jH_k}$ is an $H_i\text{-}H_k$-bimodule, which is free
of rank $\abs{c_{ij}c_{jk}}$ as a left $H_i$-module, and free of rank
$\abs{c_{ji}c_{kj}}$ as a right $H_k$-module. Along this line it is easy to
see that the $H$-modules ${_HH}$ and ${_HDH}$ are  locally free. 
It follows that the category $\rep_\lf(H)$ of locally free $H$-modules is an exact category with enough projectives and injectives. 
From the definition of $H$ as the tensor algebra $T_S(B)$ and
the fact that $B$ is projective as a right and left $S$-module we
obtain a short exact sequence of $H\text{-}H$-bimodules
\[
  0\ra H\otimes_S B\otimes_S H\ra H\otimes_S H\ra H\ra 0,
\]
where each term is projective as a left $H$-module and as right $H$-module.
It yields a functorial projective resolution for all locally free $H$-modules.
On the other hand it is easy to see that only locally free modules can have
finite projective dimension. Now it is easy to derive the following result.
The next result is proved in
\cite[Proposition~3.5 and Proposition~4.1]{GLS1}.

\begin{Prop}\label{prop:GLS1}
For  $M,N\in\rep(H)$ we have:
\bit

\item[(a)]
$M$ is locally free $\iff$ $\pdim(M)\leq 1$ $\iff$ $\pdim(M) < \infty$.
   
\item[(b)]
$N$ is locally free $\iff$ $\idim(N)\leq 1$ $\iff$ $\idim(N) < \infty$.
     
\item[(c)]
If $M$ and $N$ are locally free we have
\[
\dim_\FF \Hom_H(M,N) - \dim_\FF \Ext^1_H(M,N) 
= \bil{\rkv_H(M),\rkv_H(N)}.
\]

\eit
\end{Prop}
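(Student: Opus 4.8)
The plan is to extract everything from the standard projective bimodule resolution of the tensor algebra $H=T_S(B)$, namely the short exact sequence of $H\text{-}H$-bimodules
\[
0\ra H\otimes_S B\otimes_S H\ra H\otimes_S H\ra H\ra 0
\]
recorded above, all of whose terms are projective as left and as right $H$-modules. Since they are projective (hence flat) on the right, applying $-\otimes_H M$ preserves exactness for any $M\in\rep(H)$ and produces the functorial exact sequence
\[
0\ra H\otimes_S B\otimes_S M\ra H\otimes_S M\ra M\ra 0.
\]
If $M$ is locally free then $\ul M$ is a projective $S$-module, hence so is $B\otimes_S M$ (a direct summand of a direct sum of copies of $B$), and therefore $H\otimes_S M$ and $H\otimes_S B\otimes_S M$ are projective $H$-modules; the displayed sequence is then a projective resolution of length at most $1$. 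This proves ``locally free $\Rightarrow\pdim\le 1$'' in (a), and ``$\pdim\le 1\Rightarrow\pdim<\infty$'' is trivial.

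For the converse in (a) I would descend to $S$. Being the tensor algebra of the $S$-projective bimodule $B$, the finite-dimensional algebra $H$ is projective as a left $S$-module; hence projective $H$-modules restrict to projective $S$-modules, so a finite projective $H$-resolution of $M$ restricts to a finite projective $S$-resolution and $\pdim_S\ul M<\infty$. But $S=\prod_{i\in I}H_i$ with each $H_i=\FF[\eps_i]/(\eps_i^{c_i})$ self-injective (Frobenius), and over a self-injective algebra a module of finite projective dimension is projective: in a minimal projective resolution a nonzero last term would be an injective module, hence a split summand of its predecessor, contradicting minimality. Thus $\ul M$ is $S$-projective, i.e.\ $M$ is locally free. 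Part (b) then follows from (a) by $\FF$-duality: $D:=\Hom_\FF(-,\FF)$ is an exact anti-equivalence $\rep(H)\to\rep(H^{\op})$ interchanging $\idim_H$ with $\pdim_{H^{\op}}$ and preserving local freeness, since each $H_i$ is commutative Frobenius so that $DH_i\cong H_i$; as $H^{\op}$ is again of this type, applying (a) to $DN$ over $H^{\op}$ translates into the three equivalent statements for $N$.

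For (c), take $M,N$ locally free and apply $\Hom_H(-,N)$ to the above length-$\le 1$ resolution of $M$. The higher $\Ext$ groups vanish, and the adjunction $\Hom_H(H\otimes_S X,N)\cong\Hom_S(X,\ul N)$ produces the exact sequence
\[
0\ra\Hom_H(M,N)\ra\Hom_S(\ul M,\ul N)\ra\Hom_S(B\otimes_S M,\ul N)\ra\Ext^1_H(M,N)\ra 0,
\]
whence $\dim_\FF\Hom_H(M,N)-\dim_\FF\Ext^1_H(M,N)=\dim_\FF\Hom_S(\ul M,\ul N)-\dim_\FF\Hom_S(B\otimes_S M,\ul N)$. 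Now $\Hom_S(\ul M,\ul N)=\bigoplus_{i\in I}\Hom_{H_i}(M_i,N_i)$ with $\dim_\FF\Hom_{H_i}(F,G)=c_i\rk_{H_i}(F)\rk_{H_i}(G)$ for free $H_i$-modules $F,G$, while ${}_iH_j$ is $H_i$-free of rank $\abs{c_{ij}}$, so the vertex-$i$ summand of $B\otimes_S M$ is $H_i$-free of rank $\sum_{j\,:\,(i,j)\in\Ome}\abs{c_{ij}}\rk_{H_j}(M_j)$. Substituting these numbers and comparing with the defining formula for $\bil{-,-}$ (using the symmetry $c_ic_{ij}=c_jc_{ji}$ of $DC$) identifies the right-hand side with $\bil{\rkv_H(M),\rkv_H(N)}$, which is (c).

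The only step with genuine content is the converse in (a): the remainder is formal --- the bimodule resolution, $\FF$-duality, an adjunction and a rank count --- whereas that implication really exploits the structure of $\rep_\lf$, reducing everything to $\rep(S)$ and using that the truncated polynomial algebras $H_i$ are self-injective, which is exactly what forces finite projective dimension down to local freeness.
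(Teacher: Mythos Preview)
Your proof is correct and follows precisely the approach sketched in the paper immediately before the statement (the paper then defers to \cite[Propositions~3.5 and 4.1]{GLS1} for the details): the standard bimodule resolution of $H=T_S(B)$ yields the length~$\le 1$ projective resolution for locally free modules, the converse in (a) is obtained by restricting to $S$ and using that each $H_i$ is self-injective, part (b) follows by $\FF$-duality, and (c) is the adjunction-and-rank computation from the same resolution. There is nothing to add.
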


\begin{Rem} \label{rem:runique}
We observed in the proof of \cite[Proposition~3.2]{GLS2} that for $\FF$ algebraically
  closed there exists at most one isomorphism class of rigid locally free $H$-modules with a given rank vector. 
However, it is easy to extend this result to any field $\FF$ by a weak version of the Noether-Deuring Theorem.
Namely, if $\FF\subseteq\GG$ is a field extension, we have  a natural
isomorphism
\[
H_\GG :=  H_\GG(C,D,\Ome) \cong H\otimes_\FF \GG.
\]  
Thus, $H_\GG$, viewed as an $H\text{-}H$-bimodule,
has a canonical direct summand isomorphic to $H$. It follows, that for any
$H$-module $M$  the natural monomorphism
\begin{align*}
M &\ra M\otimes_\FF\GG
\\
m &\mapsto m \otimes 1
\end{align*}
of $H$-modules splits.
The $H$-module $M\otimes_\FF\GG$ is isomorphic to a (possibly
infinite) direct sum of copies of $M$.
As a consequence, if $M,N\in\rep(H)$ are such that
\[
M\otimes_\FF\GG\cong N\otimes_\FF\GG
\] 
as $H_\GG$-modules, then $M\cong N$ as
$H$-modules. 
Here we are using the Krull-Remak-Schmidt-Azumaya Theorem, see for example Facchini's book \cite{F12}.

Now, if we take for $\GG$ the algebraic closure of $\FF$, and
$M, N\in\rep_\lf(H)$ rigid modules with $\rk_H(M)=\rk_H(N)$, we get
that $M\otimes_\FF\GG$ and $N\otimes_\FF\GG$ are rigid $H_\GG$-modules with
\[
\rk_{H_\GG}(M\otimes_\FF\GG)=\rk_{H_\GG}(N\otimes_\FF\GG). 
\]
By our observation
from \cite{GLS2} we conclude that 
\[
M\otimes_\FF\GG\cong N\otimes_\FF\GG
\] 
as $H_\GG$-modules, and thus $M\cong N$. 
\end{Rem}


\section{Species and completions}\label{sec:species}


\subsection{Standard species over the field of Laurent series $\FF\lpw{\veps}$} \label{ssec:species1}
The following is very similar to the standard construction of species over finite fields.

We fix an indeterminate which we denote by $\veps^{1/c}$. All constructions will take place in the ambient field $\FF\lpw{\veps^{1/c}}$ of formal Laurent series.
Write $\veps := (\veps^{1/c})^c$.
We consider the degree $c$ field extension $\FF\lpw{\veps}\subseteq\FF\lpw{\veps^{1/c}}$ 
of formal Laurent series. More generally, for each positive 
divisor $k$ of $c$, we set
\[
\veps^{1/k}:=(\veps^{1/c})^{c/k} \in \FF\lpw{\veps^{1/c}}.
\]
Moreover we abbreviate 
$$
\veps_i:=\veps^{1/c_i}
$$ 
for $i\in I$.

In particular, the field extension 
$\FF\lpw{\veps} \subseteq \FF\lpw{\veps_i}$ has degree $c_i$.
For $(i,j)\in\Ome$ we have the
following diagram of field extensions:
\[\xymatrix{
    &\FF\lpw{\veps_i,\veps_j}& = &\FF\lpw{\veps^{1/l_{ij}}}\\
    \FF\lpw{\veps_i}\ar@{-}[ru]^{f_{ij}}\ar@{-}[rd]_{f_{ji}}&&\FF\lpw{\veps_j}\ar@{-}[lu]_{f_{ji}}\\
    &\FF\lpw{\veps_i}\cap\FF\lpw{\veps_j}\ar@{-}[d]^{k_{ij}}\ar@{-}[ru]_{f_{ij}}&=&\FF\lpw{\veps^{1/k_{ij}}}\\
    &\FF\lpw{\veps}
  }\]  
Observe that $\FF\lpw{\veps_i,\veps_j}=\FF\lpw{\veps^{1/l_{ij}}}$ by 
the first statement of Lemma~\ref{lem:elem}. 
The claims about the degrees follow, since 
$\veps_i^{f_{ji}}=\veps_j^{f_{ij}}=\veps^{1/k_{ij}}$. Now we set
\[
  \tS:= \prod_{i\in I} \FF\lpw{\veps_i}
\text{\qquad and \qquad}
  {_i\tH_j}:=\left(\FF\lpw{\veps_i,\veps_j}\right)^{g_{ij}}.
\]
In particular,
${_i\tH_j}$ is an $\FF\lpw{\veps_i}\text{-}\FF\lpw{\veps_j}$-bimodule,
which is free of rank $|c_{ij}|$ as a left $\FF\lpw{\veps_i}$-module
and free of rank $|c_{ji}|$ as a right $\FF\lpw{\veps_j}$-module,
for $(i,j) \in \Omega$.
 Finally, we define the tensor algebra
\[
\tH :=  \tH_{\FF\lpw{\veps}}(C,D,\Ome):= T_\tS\left(\bigoplus_{(i,j)\in\Ome} {_i\tH_j}\right).
\]
Then
$\tH$ is a finite-dimensional hereditary $\FF\lpw{\veps}$-algebra.
If $C$ is connected and $D$ is the minimal symmetrizer of $C$, the center of
$\tH$ is $\FF\lpw{\veps}$, otherwise the center may be strictly larger 
than $\FF\lpw{\veps}$.
It is easy to see that the generalized Cartan lattice of the hereditary algebra $\tH$ is isomorphic
to $L(C,D,\Ome)$.
(Each finite-dimensional hereditary $\FF$-algebra 
gives rise to a generalized Cartan lattice, compare 
\cite[Section~4]{HuK16}.)

\subsection{Integral form of the species $\tH$}
We study now intermediate rings in the extension 
$\FF\pow{\veps} \subseteq \FF\pow{\veps^{1/c}}$
of formal power series rings. We think of $\FF\pow{\veps^{1/c}}$ as the
ring of integers of the field $\FF\lpw{\veps^{1/c}}$ and observe
that $\veps^{1/k}\in\FF\pow{\veps^{1/c}}$ for each positive divisor $k$ of $c$.
Note that $\FF\pow{\veps_i}$ is free of rank $c_i$ as an $\FF\pow{\veps}$-module. 
Similarly to the previous section, we obtain 
the following diagram for each $(i,j)\in\Ome$:
\[\xymatrix{
    &\FF\pow{\veps^{1/l_{ij}}}\\
    &\FF\pow{\veps_i,\veps_j}\ar@{^{(}->}[u]_{\mathrm{fin. codim}}\\
    \FF\pow{\veps_i}\ar@{-}[ru]^{f_{ij}}\ar@{-}[rd]_{f_{ji}}&&\FF\pow{\veps_j}\ar@{-}[lu]_{f_{ji}}\\
    &\FF\pow{\veps_i}\cap\FF\pow{\veps_j}\ar@{-}[d]^{k_{ij}}\ar@{-}[ru]_{f_{ij}}&=&\FF\pow{\veps^{1/k_{ij}}}\\
    &\FF\pow{\veps}
  }\]  
Here, an edge 
\[
\xymatrix{B\ar@{-}[d]^d\\A}
\] 
stands 
for an inclusion $A\subset B$ of
rings such that $B$ is free of rank $d$ as an $A$-module.
As in Section~\ref{ssec:species1}, the claims follow easily from the equation
$\veps_i^{f_{ji}}=\veps_j^{f_{ij}}=\veps^{1/k_{ij}}$. 
The subring
\[
\FF\pow{\veps_i,\veps_j} \subseteq \FF\pow{\veps^{1/l_{ij}}}
\]
has finite codimension over $\FF$ by the second statement of Lemma~\ref{lem:elem}. 
In particular we obtain, after localizing with respect to $\veps$, 
the identity
\[
\FF\pow{\veps_i,\veps_j}_\veps=\FF\pow{\veps^{1/l_{ij}}}_\veps=\FF\lpw{\veps^{1/l_{ij}}}.
\]

Now we set
\[
  \hS:= \prod_{i\in I} \FF\pow{\veps_i}
\text{\qquad and \qquad}
  {_i\hH_j}:=\left(\FF\pow{\veps_i,\veps_j}\right)^{g_{ij}}.
\]
We regard $\hS$ as an $\FF\pow{\veps}$-algebra by mapping $\veps$ to
the tuple $(\veps_i^{c_i})_{i\in I}$.
Let $\hH_i := \FF\pow{\veps_i}$.
Then ${_i\hH_j}$ is an $\hH_i$-$\hH_j$-bimodule,
which is free of rank $|c_{ij}|$ as a left $\hH_i$-module and free of
rank $|c_{ji}|$ as a right $\hH_j$-module,
for $(i,j)\in\Ome$. Finally, we define the tensor algebra
\[
\hH :=  \hH_{\FF\pow{\veps}}(C,D,\Ome):= T_\hS\left(\bigoplus_{(i,j)\in\Ome} {_i\hH_j}\right),
\]
which is an $\FF\pow{\veps}$-algebra.

Similarly to the situation for $H$, an $\hH$-module $M$ can be described as a tuple
\[
  (\ul{M}, (M_{ij})_{(i,j)\in\Ome})
\] 
with
\[
    \ul{M}=(M_i)_{i\in I}\in\rep(\hS) \text{\qquad and \qquad}
    M_{ij}\in\Hom_{\FF\pow{\eps_i}}({_i\hH_j}\otimes_{\hH_j} M_j, M_i).
\]
We say that $M\in\rep(\hH)$ is
\emph{locally free} if $M_i$ is free as an $\hH_i$-module for all
$i\in I$.

Let
\[
\hH_\veps = \hH\otimes_{\FF\pow{\veps}}\FF\lpw{\veps}
\]
be the localization of $\hH$ at $\veps$.

\begin{Prop} \label{prop:hH-basic}
The algebra $\hH = \hH_{\FF\pow{\veps}}(C,D,\Ome)$ has the following
properties:
\bit

\item[(a)]
The $\hH$-module
${_\hH}\hH$ is locally free.

\item[(b)]
  $\FF\pow{\veps}\subseteq Z(\hH)$ and $\hH$ is free of finite rank as
  an $\FF\pow{\veps}$-module.

\item[(c)]
We have
$\hH_\veps \cong \tH$.

\item[(d)]
We have
\[
\hH/(\veps^k\hH)\cong H_\FF(C,kD,\Ome)
\] 
for all $k \in \ZZ_{> 0}$.
Thus
\[
\varprojlim_k H_\FF(C,kD,\Ome) \cong \hH.
\] 

\item[(e)]
  The decomposition $1_\hS=\sum_{i\in I} e_i$ of $1_\hS$ into orthogonal
  primitive idempotents lifts via $\hS\hookrightarrow\hH$ to 
  $1_{\hH} =\sum_{i\in I} e_i$, which is also a decomposition into primitive
  orthogonal idempotents,  such that $e_i\hH e_i=\FF\pow{\veps_i}$.

\eit
\end{Prop}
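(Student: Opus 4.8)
The plan is to verify the five assertions essentially in the order (d), (b), (c), (a), (e), since (d) identifies the finite quotients of $\hH$ with the already-understood algebras $H_\FF(C,kD,\Ome)$ and (b)--(c) are then cheap consequences of the tensor-algebra description. For part (d), I would argue directly from the tensor-algebra presentation: reduction modulo $\veps^k$ commutes with $\otimes$ and with finite direct sums, so it suffices to check that $\hS/(\veps^k)\cong\prod_i \FF[\eps_i]/(\eps_i^{kc_i})$, which is the ``$S$-part'' of $H_\FF(C,kD,\Ome)$, and that ${_i\hH_j}/(\veps^k\,{_i\hH_j})\cong {_iH'_j}{}^{g_{ij}}$ for the algebra built from the symmetrizer $kD$. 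Both reduce to the single ring identity $\FF\pow{\veps_i,\veps_j}/(\veps^k)\cong \FF[\eps_i,\eps_j]/(\eps_i^{kf_{ji}}=\eps_j^{kf_{ij}})$ — here one uses $\veps_i^{f_{ji}}=\veps_j^{f_{ij}}=\veps^{1/k_{ij}}$ and the fact, established in the displayed diagram before the statement, that $\FF\pow{\veps_i,\veps_j}$ has finite codimension in $\FF\pow{\veps^{1/l_{ij}}}$, so the quotient by $\veps^k$ is finite-dimensional with the expected presentation. Matching the bimodule structures is then a bookkeeping check against the definition of ${_iH_j}$ recalled in Section~\ref{sec:defH}. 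The inverse-limit statement follows because $(\veps^k)_k$ is a cofinal system of ideals and $\hH$ is $\veps$-adically complete, being built by tensor and direct-sum operations from the complete rings $\FF\pow{\veps_i}$.

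For part (b): by construction $\hS$ is an $\FF\pow{\veps}$-algebra via $\veps\mapsto(\veps_i^{c_i})_i$, and each $\veps_i^{c_i}=\veps$ acts centrally; moreover $\veps$ acts on every ${_i\hH_j}=\FF\pow{\veps_i,\veps_j}^{g_{ij}}$ as multiplication by the element $\veps^{1}\in\FF\pow{\veps_i,\veps_j}$, which is central in that ring and compatible with the $\hH_i$- and $\hH_j$-module structures, so $\veps$ is central in the tensor algebra $\hH$, giving $\FF\pow{\veps}\subseteq Z(\hH)$. Finiteness of the rank follows from part (d): $\hH/(\veps\hH)\cong H_\FF(C,D,\Ome)$ is finite-dimensional over $\FF$, $\hH$ is $\veps$-adically complete and separated (intersection of the $\veps^k\hH$ is zero because this already holds in each factor $\FF\pow{\veps_i}$ and $\FF\pow{\veps_i,\veps_j}$), and $\veps$ is a non-zero-divisor, so by the graded/complete Nakayama argument $\hH$ is free over $\FF\pow{\veps}$ of rank $\dim_\FF H_\FF(C,D,\Ome)$. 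Part (c) is then immediate: $\hH_\veps=\hH\otimes_{\FF\pow{\veps}}\FF\lpw{\veps}$ is obtained by inverting $\veps$, which turns $\hS$ into $\prod_i\FF\lpw{\veps_i}=\tS$ and ${_i\hH_j}$ into $\FF\pow{\veps_i,\veps_j}_\veps^{g_{ij}}=\FF\lpw{\veps^{1/l_{ij}}}^{g_{ij}}={_i\tH_j}$ using the localization identity displayed just before the statement; since localization commutes with tensor algebras we get $\hH_\veps\cong\tH$.

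For part (a), the $\hH$-module ${_\hH}\hH$ decomposes as $\bigoplus_{i} \hH e_i$ and, by the standard tensor-algebra description, $\hH e_i$ restricted along $\hS\hookrightarrow\hH$ is a direct sum of summands of the form $\hH_j$ and of iterated tensor products ${_{i_0}\hH_{i_1}}\otimes_{\hH_{i_1}}\cdots\otimes_{\hH_{i_{m-1}}}\hH_{i_m}$; each ${_i\hH_j}$ is free as a left $\hH_i$-module and as a right $\hH_j$-module of the stated finite ranks $|c_{ij}|$, $|c_{ji}|$, and there are only finitely many nonzero such tensor products because $\hH$ is finite-dimensional over $\FF\pow{\veps}$. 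Hence each $\hH_i$-component of ${_\hH}\hH$ is a finite direct sum of free $\hH_i$-modules, so ${_\hH}\hH$ is locally free. Finally, part (e): lifting the orthogonal primitive idempotent decomposition $1_{\hS}=\sum e_i$ through $\hS\hookrightarrow\hH$ is automatic since the $e_i$ already lie in $\hH$ and remain orthogonal idempotents; to see they stay primitive, reduce modulo $\veps$ and use part (d) to land in $H_\FF(C,D,\Ome)$, where the images $\bar e_i$ are the known primitive idempotents (the algebra is basic with the $\bar e_i$ indexing the simples), and idempotents that are primitive after reduction modulo the ideal $\veps\hH\subseteq\rad\hH$ of a semiperfect ring are primitive — $\hH$ is semiperfect because $\hH/\rad\hH\cong H/\rad H$ is semisimple and $\hH$ is $\veps$-adically complete. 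The identity $e_i\hH e_i=\FF\pow{\veps_i}$ is read off from the tensor-algebra description: the only ``loop'' contributions at vertex $i$ come from the base ring component $\hH_i=\FF\pow{\veps_i}$, since $\Ome$ is acyclic so no nontrivial oriented cycle exists.

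The main obstacle I anticipate is part (d), specifically pinning down the ring isomorphism $\FF\pow{\veps_i,\veps_j}/(\veps^k)\cong \FF[\eps_i,\eps_j]/(\eps_i^{kf_{ji}}=\eps_j^{kf_{ij}})$ and verifying that the resulting bimodule ${_i\hH_j}/(\veps^k)$ matches ${_iH_j}$ for the symmetrizer $kD$ on the nose, including ranks and the defining relation $\eps_i^{kf_{ji}}\alp_{ij}=\alp_{ij}\eps_j^{kf_{ij}}$ — this is where the combinatorial identities among $c_i, c_{ij}, f_{ij}, g_{ij}, k_{ij}, l_{ij}$ from Section~\ref{sec:defH} all have to be used simultaneously, and it is easy to be off by a factor. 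Everything else is formal manipulation of tensor algebras, completions, and localizations.
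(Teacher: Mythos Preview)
Your proposal is correct and rests on the same core computations as the paper (reduction and localization of the tensor-algebra data, plus acyclicity of $\Ome$), but you organize the logic differently. The paper proves (a) first, using directly that $(i,j)\in\Ome$ implies $i<j$ to bound the number of nonzero iterated tensor products ${_{i_1}\hH_{i_2}}\otimes\cdots\otimes{_{i_l}\hH_{i_{l+1}}}$; part (b) is then immediate, since a locally free module is free over each $\FF\pow{\veps_i}$ and hence over $\FF\pow{\veps}$. Your route $(d)\Rightarrow(b)$ via completeness and Nakayama also works, but note that your justification of $\veps$-adic completeness (``built by tensor and direct-sum operations from complete rings'') tacitly uses the same acyclicity/finiteness that the paper invokes for (a); without it the tensor algebra could be an infinite direct sum and completeness would fail. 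So the paper's ordering is slightly more transparent, while yours emphasizes (d) as the conceptual anchor.

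One small correction in your part (d): when passing to the symmetrizer $kD$, the bimodule relation does \emph{not} acquire a factor of $k$. The numbers $c_{ij}$, $g_{ij}$, $f_{ij}$ depend only on $C$, so for $H_\FF(C,kD,\Ome)$ the defining relation of ${_iH'_j}$ is still $\eps_i^{f_{ji}}\alp_{ij}=\alp_{ij}\eps_j^{f_{ij}}$; only the nilpotency degrees change to $\eps_i^{kc_i}=0$. Accordingly, the identity you want is
\[
\FF\pow{\veps_i,\veps_j}/(\veps^k)\;\cong\;\FF[\eps_i,\eps_j]\big/\bigl(\eps_i^{f_{ji}}-\eps_j^{f_{ij}},\ \eps_i^{kc_i}\bigr),
\]
not the relation $\eps_i^{kf_{ji}}=\eps_j^{kf_{ij}}$ you wrote. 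This is exactly the ``off by a factor'' risk you anticipated. The paper's (e) is dismissed as obvious; your argument via semiperfectness and reduction modulo $\veps$ is a correct and more explicit way to see primitivity.
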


\begin{proof}
  (a) follows, similarly to the situation for $H$, from the following
  observation: If $(i_1,i_2), (i_2,i_3),\ldots (i_l,i_{l+1})\in\Ome$ then
  the $\FF\pow{\veps_{i_1}}\text{-}\FF\pow{\veps_{i_{l+1}}}$-bimodule
  \[
    {_{i_1}\hH_{i_2}}\otimes_{\FF\pow{\veps_{i_2}}}{_{i_2}\hH_{i_3}}
 \otimes_{\FF\pow{\veps_{i_3}}}\cdots\otimes_{\FF\pow{\veps_{i_l}}}{_{i_l}\hH_{i_{l+1}}}
  \]  
is free of rank
$\abs{c_{i_1,i_2}\cdot c_{i_2,i_3}\cdot\,\cdots\,\cdot c_{i_l,i_{l+1}}}$,
since ${_i\hH_j}$ is free of rank $-c_{ij}$ as a left 
$\FF\pow{\veps_i}$-module when
$(i,j)\in\Ome$.

Recall that $(i,j)\in\Ome$ implies by our convention $i<j$. Thus there are
only finitely many sequences $i_1,i_2,\ldots,i_l,i_{l+1}$ in $I$ such that
$(i_1,i_2), (i_2,i_3),\ldots,(i_l,i_{l+1})\in\Ome$. 

For (b) we observe, that by construction $\veps\in Z(\hH)$. 
Now, the rest of (b) follows from (a) since $\FF\pow{\veps_i}$, viewed as
an $\FF\pow{\veps}$-module, is free of rank $c_i$ for all $i\in I$.

Next, the localization $\FF\pow{\veps}_\veps$ is obviously
$\Quot(\FF{\pow{\veps}})=\FF\lpw{\veps}$,
the field of Laurent series, and $\FF\pow{\veps_i}_\veps$ is a commutative 
$\FF\lpw{\veps}$-algebra of dimension $c_i$ without zero-divisors. Thus,
$\FF\lpw{\veps} \subseteq \FF\pow{\veps_i}_\veps$ is a field extension of degree $c_i$. 
Since $\veps_i^{c_i}=\veps$ we conclude that $\FF\pow{\veps_i}_\eps=\FF\lpw{\veps_i}$
and $\hS_\veps=\tS$. Now it is clear from the constructions that we have
a natural identification of $\tS\text{-}\tS$-bimodules
\[
  \left(\bigoplus_{(i,j)\in\Ome}{_i\hH_j}\right)_\veps =
  \bigoplus_{(i,j)\in\Ome}{_i\tH_j},
\]
which shows (c).

In order to show (d), we proceed similarly. Note first that obviously
\[
\hS/(\veps) \cong \prod_{i\in I} \FF[\eps_i]/(\eps_i^{c_i}),
\] 
where the isomorphism sends $\veps_i + (\veps)$ to $\eps_i + (\eps_i^{c_i})$.
Next,
recalling the definition of ${_iH_j'}$ from Section~\ref{sec:defH} we observe
that $\FF\pow{\veps^{1/l_{ij}}}/(\veps)$ is under $1\mapsto\alp_{ij}$ isomorphic
to ${_iH'_j}$ as an $\FF[\eps_i]/(\eps_i^{c_i})\text{-}\FF[\eps_j]/(\eps_j^{c_j})$-bimodule. This shows that $\hH/(\veps\hH)\cong H$. The proof that
\[
\hH/(\veps^k\hH)\cong H_\FF(C,kD,\Ome)
\] is similar. Finally, by part (b),
$\hH$ is complete in the $\veps$-adic topology, and thus
\[
\hH\cong \varprojlim_k \hH/(\veps^k\hH).
\] 

Part (e) is obvious.
\end{proof}

\begin{Rem} \label{Rem-cplnoethA}
By Proposition~\ref{prop:hH-basic}(b), $\hH$ is a Noetherian algebra over the complete local ring $\FF\pow{\veps}$,
in the sense of Auslander.
In particular, $\hH$ is a semiperfect ring. 
For any $M \in \rep(\hH)$ the ring $\End_\hH(M)$ is Noetherian and semiperfect, and $M$ is indecomposable if and only if $\End_\hH(M)$ is local.
It follows that $\rep(\hH)$ has the Krull-Remak-Schmidt property, see the beginning of Section~5 in \cite{Aus78}.
\end{Rem}

\begin{Prop} \label{prop:hH-pdim}
\bit

\item[(a)] 
For locally free $\hH$-modules we have a standard projective resolution.
In particular,  $M\in\rep_\lf(\hH)$ implies $\pdim(M)\leq 1$.

\item[(b)]
Each submodule of a locally free $\hH$-module is again locally free.

\item[(c)]
$\gldim(\hH)=2$ if $C\neq\diag(2,2,\ldots,2)$, otherwise $\gldim(\hH)=1$.

\eit
\end{Prop}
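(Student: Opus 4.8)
The plan is to mimic the arguments already used for $H$ in \cite{GLS1}, exploiting that $\hH$ is a tensor algebra $T_\hS(\hB)$ with $\hB = \bigoplus_{(i,j)\in\Ome}{}_i\hH_j$ projective as a left and as a right $\hS$-module (each ${}_i\hH_j$ is free of finite rank on either side, as recorded in Section~\ref{sec:species}). For (a), the standard fact about tensor algebras gives a short exact sequence of $\hH$-$\hH$-bimodules
\[
0 \ra \hH\otimes_\hS \hB\otimes_\hS \hH \ra \hH\otimes_\hS \hH \ra \hH \ra 0,
\]
in which both left-hand terms are projective as left $\hH$-modules (because $\hB$ and $\hS$ are projective as left $\hS$-modules, and $\hH$ is free over $\hS$). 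Applying $-\otimes_\hH M$ for $M\in\rep_\lf(\hH)$ yields
\[
0 \ra \hH\otimes_\hS \hB\otimes_\hS M \ra \hH\otimes_\hS M \ra M \ra 0;
\]
here $M$ locally free means $\ul{M}$ is projective over $\hS$, so $\hB\otimes_\hS M$ and $M$ are projective $\hS$-modules, hence $\hH\otimes_\hS(-)$ of them is projective over $\hH$. This exhibits $\pdim_\hH(M)\le 1$ and is functorial in $M$.

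For (b): if $L\subseteq M$ with $M\in\rep_\lf(\hH)$, then as an $\hS$-module $L_i\subseteq M_i$ with $M_i$ free over $\hH_i=\FF\pow{\veps_i}$, which is a (commutative, regular, one-dimensional) discrete valuation ring; hence $L_i$, being a finitely generated torsion-free $\hH_i$-module, is itself free. Thus $\ul L$ is a projective $\hS$-module and $L$ is locally free. (One should note finite generation of $L$ follows from $\hH$ being Noetherian, Remark~\ref{Rem-cplnoethA}.)

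For (c): by (a) every locally free module has projective dimension $\le 1$, and by (b) the kernel of a projective cover of an arbitrary $M\in\rep(\hH)$ is locally free (a projective $\hH$-module is locally free, e.g.\ by Proposition~\ref{prop:hH-basic}(a) and (e)), so it has projective dimension $\le 1$; hence $\pdim_\hH(M)\le 2$ for all $M$, i.e.\ $\gldim(\hH)\le 2$. To decide when equality holds, compute $\pdim$ of a simple module $\hH/(\veps^k\hH)$-style reduction is not available here since $\hH$ is infinite-dimensional, so instead I would look directly at the simple $\hH$-modules: these are the $S_i$ (supported at a single vertex, with $\veps_i$ acting as $0$), and one checks $\pdim_\hH(S_i)=2$ exactly when the standard resolution of $S_i$ does not terminate at step~1, which happens precisely when either $c_i>1$ (so that $E_i:=\hH_i$ is not simple, forcing $\rad$ to be nontrivial at vertex $i$) or some arrow enters/leaves $i$ with a nontrivial relation — equivalently, unless $C=\diag(2,\dots,2)$, in which case $\hH=\prod_i\FF\pow{\veps_i}$ is a product of DVRs and $\gldim(\hH)=1$. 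The main obstacle is this last step: pinning down $\pdim_\hH(S_i)$ requires explicitly resolving $S_i$, using the kernel from (a)/(b) and verifying that $\Ext^2_\hH(S_i,-)\ne0$ — essentially identifying $\rkv$ or multiplicity data showing the second syzygy is nonzero — but this is parallel to the $\gldim(H)\le 1$ computation in \cite{GLS1} (where $H$ is $1$-Iwanaga--Gorenstein), shifted up by one because of the extra DVR direction $\veps_i$, and should follow by the same bookkeeping.
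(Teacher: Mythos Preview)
Your proof of (a) and (b) is essentially identical to the paper's. For the upper bound in (c), the paper argues that every left ideal of $\hH$ has $\pdim\le 1$ (by (a) and (b)) and then invokes Auslander's theorem \cite{Aus55} to conclude $\gldim(\hH)\le 2$; your route via ``first syzygy of any module is locally free'' is a direct variant of the same idea and is fine.

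For the lower bound in (c), your discussion contains a genuine confusion: you write that $\pdim_\hH(S_i)=2$ ``when $c_i>1$ (so that $\hH_i$ is not simple)''. But $\hH_i=\FF\pow{\veps_i}$ is a DVR for \emph{every} $c_i\ge 1$, so the value of $c_i$ is irrelevant here---you appear to be thinking of $H_i=\FF[\eps_i]/(\eps_i^{c_i})$ rather than $\hH_i$. The paper's argument is cleaner and avoids this: observe that $C\ne\diag(2,\dots,2)$ is equivalent to $\Ome\ne\emptyset$, pick any $(i,j)\in\Ome$, and verify directly that $\pdim_\hH(S_j)=2$. (Concretely, $\rad(\hH e_j)$ requires both $\veps_j e_j$ and the arrow generators as a minimal generating set, so its projective cover is strictly larger than $\hH e_j$ and the second syzygy is nonzero.) When $\Ome=\emptyset$, $\hH=\hS$ is a product of DVRs, giving $\gldim=1$. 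So your bottom line is correct, but the stated criterion for which simples witness $\pdim=2$ should be replaced by the presence of an outgoing arrow rather than anything involving $c_i$.
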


\begin{proof}
By the description of $\hH$ as $T_\hS(\hB)$ for
\[
\hB := \bigoplus_{(i,j)\in\Ome} \left(\FF\pow{\veps_i,\veps_j}\right)^{g_{ij}}
\] 
we obtain, as in
the case of $H$, a short exact sequence of $\hH\text{-}\hH$-bimodules
\[
  0\ra \hH\otimes_\hS\hB\otimes_\hS\hH\ra\hH\otimes_\hS\hH\ra\hH\ra 0
\]
with each term projective as a left $\hH$-module and as a right $\hH$-module.
This yields for each locally free $\hH$-module $M$ a functorial projective
resolution
\[
  0\ra \hH\otimes_\hS\hB\otimes_\hS M\ra\hH\otimes_\hS M\ra M\ra 0.
\]
In particular, we have $\pdim(M)\leq 1$.
Thus (a) is proved.

Since $\FF\pow{\veps_i}$ is a (local) principal ideal domain,
each submodule of a free $\FF\pow{\veps_i}$-module is again free.
In other words, each submodule of a projective $\hS$-module is again
projective. Thus, in particular each submodule of a locally free $\hH$-module
is locally free.
This proves (b).

Finally, $\hH$ is locally free as a left $\hH$-module by
Proposition~\ref{prop:hH-basic}(a). Thus, for each left ideal $I\leq\hH$ we
have $\pdim_\hH(I)\leq 1$ by parts (a) and (b).
By \cite[Theorem~1]{Aus55} this implies $\gldim({\hH})\leq 2$. 

Note that $C \not= \diag(2,\ldots,2)$ if and only if $\Ome \not= \emptyset$.
In this case, let $(i,j) \in \Ome$.
Then one checks easily that $\pdim(S_j) = 2$ and therefore
$\gldim(\hH) = 2$.
(Here $S_j$ is the simple $\hH$-module associated with
$j$.)
For $C = \diag(2,\ldots,2)$ we have $\hH \cong \hS$ and 
therefore
$\gldim(\hH) = 1$.
This finishes the proof of (c).
\end{proof}

Note however, that the simple $\tH$-module $S_1$, which is not locally free,
has also $\pdim(S_1)=1$. We will see later that partial tilting $\hH$-modules
are locally free.

\begin{Lem}\label{lem:free1}
For $M \in \rep(\hH)$ the following 
are equivalent:
\bit

\item[(a)]
$M$ is locally free;

\item[(b)]
$M$ is free as an $\FF\pow{\veps}$-module.

\eit
\end{Lem}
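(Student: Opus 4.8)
The plan is to prove the two implications separately, with $(a)\Rightarrow(b)$ being essentially immediate and $(b)\Rightarrow(a)$ requiring the real work. For $(a)\Rightarrow(b)$: if $M$ is locally free, then each $M_i$ is free as an $\hH_i = \FF\pow{\veps_i}$-module, say of rank $r_i$. By Proposition~\ref{prop:hH-basic}(b) (or rather the computation preceding it), $\FF\pow{\veps_i}$ is free of rank $c_i$ as an $\FF\pow{\veps}$-module, hence each $M_i$ is free of rank $c_i r_i$ over $\FF\pow{\veps}$. Since $\ul{M} = \bigoplus_{i\in I} M_i$ as an $\FF\pow{\veps}$-module (the underlying $\FF\pow{\veps}$-module structure of $M$ only sees $\ul M$), we conclude $M$ is free over $\FF\pow{\veps}$ of rank $\sum_i c_i r_i$.

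For $(b)\Rightarrow(a)$: assume $M$ is free as an $\FF\pow{\veps}$-module; I must show each $M_i$ is free over $\FF\pow{\veps_i}$. The key point is that $\FF\pow{\veps_i}$ is a (commutative, Noetherian, local) principal ideal domain — indeed a discrete valuation ring — so a finitely generated $\FF\pow{\veps_i}$-module is free if and only if it is torsion-free. So it suffices to show each $M_i$ is torsion-free over $\FF\pow{\veps_i}$, i.e.\ that $\veps_i$ acts injectively on $M_i$. Now $M_i = e_i M$ and $\FF\pow{\veps_i} = e_i \hH e_i$ acts on $e_i M$; the element $\veps_i \in e_i\hH e_i$ satisfies $\veps_i^{c_i} = \veps$ (this is how $\FF\pow{\veps_i}$ sits over $\FF\pow{\veps}$, cf.\ the defining relation and Proposition~\ref{prop:hH-basic}(e)). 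Since $M$ is free, hence torsion-free, over $\FF\pow{\veps}$, the central element $\veps$ acts injectively on all of $M$, in particular on $M_i$. But if $\veps_i v = 0$ for some $v \in M_i$, then $\veps v = \veps_i^{c_i} v = 0$, forcing $v = 0$. Hence $\veps_i$ acts injectively on $M_i$, so $M_i$ is torsion-free and therefore free over the PID $\FF\pow{\veps_i}$. This holds for every $i\in I$, so $M$ is locally free.

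The main (minor) obstacle is being careful about the interaction between the $\FF\pow{\veps}$-module structure and the decomposition $M = \bigoplus_i M_i$: one must note that $1_\hH = \sum_{i\in I} e_i$ is an orthogonal idempotent decomposition with $e_i \hH e_i = \FF\pow{\veps_i}$ (Proposition~\ref{prop:hH-basic}(e)), so that as an $\FF\pow{\veps}$-module $M$ decomposes as the direct sum of the $M_i = e_i M$, each of which is then automatically $\FF\pow{\veps}$-free (being a summand of a free module over the local ring $\FF\pow{\veps}$), and that the $\FF\pow{\veps_i}$-action on $M_i$ restricts the central $\veps$-action with $\veps = \veps_i^{c_i}$. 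Everything else is the standard fact that finitely generated torsion-free modules over a discrete valuation ring are free.
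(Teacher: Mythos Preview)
Your argument is correct. The direction $(a)\Rightarrow(b)$ matches the paper's proof exactly: locally free means projective over $\hS=\prod_i\FF\pow{\veps_i}$, and since each $\FF\pow{\veps_i}$ is free of rank $c_i$ over $\FF\pow{\veps}$, projective $\hS$-modules are free $\FF\pow{\veps}$-modules.

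For $(b)\Rightarrow(a)$ you actually supply more than the paper does. The paper's proof is extremely terse and, read literally, only writes down the forward implication (``all projective $\hS$-modules are free $\FF\pow{\veps}$-modules'') before declaring ``This yields the result.'' Your torsion-freeness argument via $\veps=\veps_i^{c_i}$ and the DVR structure of $\FF\pow{\veps_i}$ is exactly the missing converse, and it is the natural way to justify what the paper leaves implicit. So your approach is not genuinely different---it is the same idea, just written out in full where the paper is elliptical.
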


\begin{proof}
By definition
$M$ is locally free if and only if $M$ is a projective $\hS$-module.
Now $\FF\pow{\veps}$ is a subalgebra of $\hS$ and all projective $\hS$-modules are free $\FF\pow{\veps}$-modules.
This yields the result.
\end{proof}

\subsection{Orders, lattices and locally free modules}
We repeat some definitions from the theory of orders and lattices
over orders.
For more details we refer for example to
\cite[Chapter~3]{CR81}.

Let 
\[
R := \FF\pow{\veps} 
\text{\qquad and \qquad}
K := \FF\lpw{\veps}.
\]
An $R$-\emph{lattice} is a finitely generated projective $R$-module.
Thus (since we do not work with arbitrary Dedekind domains $R$ as in
\cite{CR81}),
an $R$-lattice is a free $R$-module of finite rank.

An $R$-\emph{order} is a ring $\Lambda$ such that the following hold:
\bit

\item[(i)]
The center $Z(\Lambda)$ of $\Lambda$
contains $R$;

\item[(ii)]
$\Lambda$ is an $R$-lattice.

\eit

Now let $A$ be a finite-dimensional $K$-algebra.
An $R$-\emph{order in} $A$ is a subring $\Lambda$ of $A$
such that the following hold:
\bit

\item[(i)]
$\Lambda$ is an $R$-order;

\item[(ii)]
$\Lambda$ generates $A$ as a $K$-vector space.

\eit

Now let $\Lambda$ be an $R$-order in a $K$-algebra $A$.
A $\Lambda$-\emph{lattice} is a $\Lambda$-module, which is an $R$-lattice.

\begin{Prop}\label{prop:free2}
The algebra $\hH$ is an $R$-order in the $K$-algebra $\tH$.
Furthermore, for $M \in \rep(\hH)$ the following 
are equivalent:
\bit

\item[(a)]
$M$ is an $\hH$-lattice;

\item[(b)]
$M$ is locally free.

\eit
\end{Prop}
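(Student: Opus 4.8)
The plan is to prove the two assertions of Proposition~\ref{prop:free2} essentially independently, using the structural results already established for $\hH$. For the first claim, that $\hH$ is an $R$-order in $\tH$: by Proposition~\ref{prop:hH-basic}(b) we have $R = \FF\pow{\veps} \subseteq Z(\hH)$ and $\hH$ is free of finite rank as an $R$-module, so $\hH$ is an $R$-lattice and conditions (i) and (ii) in the definition of an $R$-order hold. It remains to view $\hH$ as a subring of $\tH$ generating it over $K = \FF\lpw{\veps}$. For this I would invoke Proposition~\ref{prop:hH-basic}(c), which gives $\hH_\veps \cong \tH$; since $\hH$ is $R$-torsion-free (being free over $R$), the canonical map $\hH \to \hH_\veps = \hH \otimes_R K$ is injective, so $\hH$ embeds in $\tH$, and $\hH \otimes_R K = \tH$ shows that $\hH$ spans $\tH$ as a $K$-vector space. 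Hence $\hH$ is an $R$-order in $\tH$.

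For the equivalence of (a) and (b), the implication (b)$\Rightarrow$(a) is immediate: if $M \in \rep(\hH)$ is locally free, then by Lemma~\ref{lem:free1} it is free as an $\FF\pow{\veps} = R$-module, hence an $R$-lattice, hence an $\hH$-lattice by definition. Conversely, suppose $M$ is an $\hH$-lattice, i.e. $M$ is an $R$-lattice, which here just means $M$ is a finitely generated free (equivalently, torsion-free) $R$-module. I then need to deduce that $M$ is locally free as an $\hH$-module, i.e. that each $M_i = e_i M$ is free over $\hH_i = \FF\pow{\veps_i}$. The key point is that $\FF\pow{\veps_i}$ is a complete discrete valuation ring, so a finitely generated $\hH_i$-module is free if and only if it is torsion-free over $\hH_i$. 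But $M_i$ is a submodule of the $R$-torsion-free module $M$, and the $\hH_i$-torsion submodule of $M_i$ is certainly $R$-torsion (since $\veps = \veps_i^{c_i}$ acts as the $R$-action), hence zero. Therefore each $M_i$ is torsion-free over the DVR $\hH_i$, hence free, so $M$ is locally free.

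The routine but slightly delicate step is the torsion-freeness argument in (a)$\Rightarrow$(b): one must check that $\hH_i$-torsion in $M_i$ forces $R$-torsion, which follows because a nonzero element killed by a power of $\veps_i$ is killed by a power of $\veps = \veps_i^{c_i}$, contradicting freeness of $M$ over $R$. I do not expect any real obstacle here; the essential inputs — $\hH$ free of finite rank over $R$, $\hH_\veps \cong \tH$, $\FF\pow{\veps_i}$ a PID, and Lemma~\ref{lem:free1} — are all already in hand. Alternatively, one can phrase (a)$\Rightarrow$(b) as: an $\hH$-lattice is in particular a projective $\hS$-module because each $\hS$-component, being a finitely generated torsion-free module over the principal ideal domain $\FF\pow{\veps_i}$, is free, which is exactly the definition of locally free; this is really the same argument as in the proof of Proposition~\ref{prop:hH-pdim}(b).
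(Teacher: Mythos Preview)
Your proposal is correct and follows essentially the same approach as the paper: invoke Proposition~\ref{prop:hH-basic}(b),(c) for the order statement, and reduce the equivalence (a)$\Leftrightarrow$(b) to Lemma~\ref{lem:free1} via the observation that an $\hH$-lattice is by definition a finitely generated $\hH$-module that is free over $R$. The only difference is that the paper simply cites Lemma~\ref{lem:free1} for both directions, whereas you invoke it for (b)$\Rightarrow$(a) and then give an explicit torsion-freeness argument over the DVRs $\FF\pow{\veps_i}$ for (a)$\Rightarrow$(b); your argument is exactly the content needed to justify that direction of Lemma~\ref{lem:free1}, so nothing is lost and nothing is gained beyond a little extra detail.
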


\begin{proof}
The first part follows from the definitions.
By definition $M$ is an $\hH$-lattice if and only if $M$ is free as
an $R$-module.
Now the result follows from Lemma~\ref{lem:free1}.
\end{proof}

\begin{Lem}\label{lem:homfree}
Let $M \in \rep(\hH)$ and $N \in \rep_\lf(\hH)$.
Then
$\Hom_\hH(M,N)$ is free 
as an $R$-module.
\end{Lem}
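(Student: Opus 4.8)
The plan is to exploit that $R = \FF\pow{\veps}$ is a complete discrete valuation ring, hence in particular a principal ideal domain, so that a \emph{finitely generated} $R$-module is free if and only if it is torsion-free. It therefore suffices to verify two things: that $\Hom_\hH(M,N)$ is finitely generated over $R$, and that it has no $R$-torsion. Note first that the $R$-action on $\Hom_\hH(M,N)$ is unambiguous: since $R\subseteq Z(\hH)$ by Proposition~\ref{prop:hH-basic}(b), for $f\in\Hom_\hH(M,N)$ and $r\in R$ the map $m\mapsto rf(m)$ coincides with $m\mapsto f(rm)$ and is again $\hH$-linear.

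For finite generation, I would argue as follows. By Proposition~\ref{prop:hH-basic}(b) the algebra $\hH$ is free of finite rank as an $R$-module, so every finitely generated $\hH$-module, in particular $M$ and $N$, is finitely generated as an $R$-module. Since $R$ is Noetherian, $\Hom_R(M,N)$ is then a finitely generated $R$-module, and $\Hom_\hH(M,N)$ is an $R$-submodule of it, hence also finitely generated over $R$.

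For torsion-freeness, suppose $f\in\Hom_\hH(M,N)$ satisfies $\veps^k f=0$ for some $k\geq 1$. Then $\veps^k f(m)=0$ in $N$ for every $m\in M$. But $N$ is locally free, so by Lemma~\ref{lem:free1} it is free as an $\FF\pow{\veps}$-module, in particular torsion-free over $R$; hence $f(m)=0$ for all $m$, i.e. $f=0$. Combined with the previous step this shows that $\Hom_\hH(M,N)$ is a finitely generated torsion-free module over the PID $R$, and therefore free.

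I do not expect a genuine obstacle here; the only points that need a moment of care are the reduction of finite generation over $R$ to finite generation over $\hH$ (which rests on Proposition~\ref{prop:hH-basic}(b)) and the passage from ``$N$ locally free'' to ``$N$ torsion-free over $R$'' via Lemma~\ref{lem:free1}.
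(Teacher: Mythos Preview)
Your proof is correct. Both your argument and the paper's ultimately rest on the fact that $R$ is a PID and that $N$, being locally free, is free (hence torsion-free) over $R$ via Lemma~\ref{lem:free1}. The difference is purely in how finite generation is obtained: the paper takes a projective cover $P_0\to M$, computes $\Hom_\hH(\hH e_i,N)\cong e_iN$ directly to see that $\Hom_\hH(P_0,N)$ is a free $R$-module of finite rank, and then embeds $\Hom_\hH(M,N)$ into it; you instead embed $\Hom_\hH(M,N)$ into $\Hom_R(M,N)$ and invoke Noetherianness of $R$ together with Proposition~\ref{prop:hH-basic}(b). Your route is slightly more abstract and avoids the explicit computation for projectives; the paper's route has the minor side benefit of identifying $\Hom_\hH(\hH e_i,N)$ concretely. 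Either way the argument is short and the conclusion is the same.
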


\begin{proof}
Since $R$ is a subalgebra of the center $Z(\hH)$ of $\hH$,
we get that $M = {_\hH}M_R$ is an $\hH$-$R$-bimodule.
This turns $\Hom_\hH(M,N)$ into an $R$-module.

Next, let $\hH e_i$ be an indecomposable projective $\hH$-module.
Then there is an $R$-module isomorphism 
$\Hom_\hH(\hH e_i,N) \cong e_iN$ defined by $f \mapsto f(e_i)$.
It follows that
$\Hom_\hH(\hH e_i,N)$ is a free $R$-module of finite rank.
(Here we used that $N$ is locally free.)
By the additivity of Hom-functors, we get that $\Hom_\hH(P,N)$
is a free $R$-module of finite rank for all finitely generated projective $\hH$-modules $P$.

Let $P_0 \to M$ be a projective cover of $M$.
Appyling $\Hom_\hH(-,N)$ gives an exact sequence
$$
0 \to \Hom_\hH(M,N) \to \Hom_\hH(P_0,N).
$$
We know already that
$\Hom_\hH(P_0,N)$ is free of finite
rank as an $R$-module. 
So
$\Hom_\hH(M,N)$ is a submodule of a free $R$-module and is therefore also free as an $R$-module.
\end{proof}


\section{Reduction and localization functors}\label{sec:redloc}


\subsection{Definitions and first properties}
For the rest of the paper we set
\[
R := \FF\pow{\veps} 
\text{\qquad and \qquad}
K := \FF\lpw{\veps}.
\]
By Proposition~\ref{prop:hH-basic} we have two canonical homomorphisms 
\[
\xymatrix{\hH_R(C,D,\Ome)\ar@{^(->}[r]^{\iota}\ar@{->>}[d]_{\pi} &\tH_K(C,D,\Ome)\\  
  H_\FF(C,D,\Ome)}
\]
of $\FF$-algebras.
In particular we can view $H$ as an $H\text{-}{\hH}$-bimodule and $\tH$ as an
$\tH\text{-}\hH$-bimodule. 
It is easy to see that the full subcategory of
locally free $\hH$-modules, $\rep_\lf(\hH)$, is an exact subcategory of
$\rep(\hH)$. 
Via restrictions of the tensor product functors we obtain
the \emph{reduction functor}
\begin{align*}
\Red\df\rep_\lf(\hH)&\ra\rep_\lf(H)
\\
M & \mapsto H\otimes_\hH M 
\end{align*}
and the \emph{localization functor}
\begin{align*}
\Loc\df\rep_\lf(\hH)&\ra\rep(\tH)
\\  
M & \mapsto\tH\otimes_\hH M.
\end{align*}
We have the
identifications
\[
\Red(M) = \bar{M} := M/(\veps M)
\text{\qquad and \qquad} 
\Loc(M) = M_\veps,
\]
where $M_\veps$ is the localization with respect
to $\veps\in Z(\hH)$.

\begin{Lem}\label{lem:functors1}
Assume that $M \in \rep_\lf(\hH)$.
Then
\[
\rkv_\hH(M) = \rkv_H(\Red(M)) = \dimv(\Loc(M)).
\]
\end{Lem}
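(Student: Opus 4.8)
The plan is to compute all three vectors componentwise by reducing to the local picture at each vertex $i \in I$, using the idempotent description from Proposition~\ref{prop:hH-basic}(e) and the fact that $\veps$ acts on the $i$-th component through $\veps_i^{c_i}$. First I would fix $M \in \rep_\lf(\hH)$ and note that, by definition of local freeness, $M_i := e_i M$ is a free $\hH_i = \FF\pow{\veps_i}$-module, say of rank $r_i$; thus $\rkv_\hH(M) = (r_i)_{i \in I}$. Since $\FF\pow{\veps_i}$ is free of rank $c_i$ over $R = \FF\pow{\veps}$ (via $\veps \mapsto \veps_i^{c_i}$), the module $M_i$ is free of rank $c_i r_i$ over $R$, which by Lemma~\ref{lem:free1} is just a consistency check rather than something needed here.

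Next I would handle $\Red(M) = \bar M = M/(\veps M)$. Applying $e_i$ and using that $e_i$ commutes with the $R$-action, we get $e_i \bar M = M_i/(\veps M_i) = M_i/(\veps_i^{c_i} M_i)$. Since $M_i \cong \FF\pow{\veps_i}^{\,r_i}$ as an $\hH_i$-module, this quotient is $(\FF\pow{\veps_i}/(\veps_i^{c_i}))^{r_i} = H_i^{r_i}$, which is free of rank $r_i$ over $H_i = \FF[\eps_i]/(\eps_i^{c_i})$ (using the identification of $\hH/(\veps\hH)$ with $H$ from Proposition~\ref{prop:hH-basic}(d), under which $\veps_i \mapsto \eps_i$ and $e_i\hH e_i = \FF\pow{\veps_i}$ maps onto $H_i$). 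Hence $\bar M$ is locally free with $\rk_{H_i}(e_i\bar M) = r_i$, i.e. $\rkv_H(\Red(M)) = (r_i)_{i\in I} = \rkv_\hH(M)$.

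For the localization, I would apply $e_i$ to $\Loc(M) = M_\veps = \tH \otimes_\hH M$, obtaining $e_i M_\veps = (M_i)_\veps$, the localization of $M_i$ at $\veps = \veps_i^{c_i}$, which is the same as inverting $\veps_i$. So $e_i M_\veps \cong \FF\lpw{\veps_i}^{\,r_i} = \tH_i^{\,r_i}$ as a module over $\tH_i = e_i\tH e_i = \FF\lpw{\veps_i}$. The dimension vector $\dimv(\Loc(M))$ of a $\tH$-module is, by the conventions recalled for species in Section~\ref{sec:species} (each component measured over the relevant division ring $\FF\lpw{\veps_i}$), exactly $(\dim_{\FF\lpw{\veps_i}} e_i M_\veps)_{i\in I} = (r_i)_{i\in I}$. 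This gives $\dimv(\Loc(M)) = \rkv_\hH(M)$ and completes the chain of equalities.

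The only genuine subtlety — the main obstacle, such as it is — is making sure the three notions of "rank vector / dimension vector" are all being read off in mutually compatible coordinates, i.e. that the dimension vector of a $\tH$-module really is indexed so that its $i$-th entry is the $\FF\lpw{\veps_i}$-dimension of the $i$-th component (and not, say, the $\FF\lpw{\veps}$-dimension, which would introduce spurious factors of $c_i$); this is fixed by the conventions set up for the species $\tH$ in Section~\ref{sec:species}, where the simple $\tH$-module at $i$ is the one-dimensional $\FF\lpw{\veps_i}$-space. Everything else is a direct computation with free modules over the principal ideal domains $\FF\pow{\veps_i}$ and is routine once the idempotent reduction in the first paragraph is in place.
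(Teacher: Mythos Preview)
Your proof is correct and follows essentially the same approach as the paper: a componentwise computation at each vertex $i$, using that $M_i$ is free of finite rank over $\FF\pow{\veps_i}$ so that reducing modulo $\veps = \veps_i^{c_i}$ and inverting $\veps$ produce free modules of the same rank over $H_i$ and $\FF\lpw{\veps_i}$, respectively. The paper phrases the same calculation via tensor products $S\otimes_{\hS}M_i$ and $\tS\otimes_{\hS}M_i$ rather than quotients and localizations, but this is only a cosmetic difference.
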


\begin{proof}
Let
$M = ((M_i)_{i \in I},(M_{ij})_{(i,j) \in \Ome}) \in \rep_\lf(\hH)$.
Then
$$
\Red(M) = 
((S \otimes_\hS M_i)_{i \in I},(S \otimes_\hS M_{ij})_{(i,j) \in \Omega})
$$
and
$$
\Loc(M) = 
((\tS \otimes_\hS M_i)_{i \in I},(\tS \otimes_\hS M_{ij})_{(i,j) \in \Omega}).
$$

We have
$$
S \otimes_\hS M_i 
\cong \FF[\eps_i]/(\eps_i^{c_i}) \otimes_{\FF\pow{\veps_i}} M_i,
$$
where $\FF[\eps_i]/(\eps_i^{c_i})$ is regarded as an $\FF\pow{\veps_i}$-module via $\veps_i \mapsto \eps_i + (\eps_i^{c_i})$.
This is a free $\FF[\eps_i]/(\eps_i^{c_i})$-module whose rank is $\rk(M_i)$.
It follows that 
\[
\rkv_\hH(M) = \rkv_H(\Red(M)).
\]
Furthermore, we have
$$
\tS \otimes_\hS M_i 
\cong \FF\lpw{\veps_i} \otimes_{\FF\pow{\veps_i}} M_i.
$$
This is an $\FF\lpw{\veps_i}$-vector space of dimension $\rk_{\hH_i}(M_i)$.
It follows that 
\[
\rkv_\hH(M) = \dimv(\Loc(M)).
\]
\end{proof}

\begin{Lem}\label{lem:exact}
The functors $\Red$ and $\Loc$ are exact.
\end{Lem}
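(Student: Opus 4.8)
The plan is to prove exactness of both functors by recognizing each as a base-change (tensor) functor along a flat ring map, so that exactness is automatic once the relevant flatness is established. Recall that $\Red = H \otimes_{\hH} -$ and $\Loc = \tH \otimes_{\hH} -$, both restricted to $\rep_\lf(\hH)$; in the identifications given just above, $\Red(M) = M/(\veps M)$ and $\Loc(M) = M_\veps$. Since both functors are right exact (being tensor functors), the only issue is left exactness, i.e.\ preservation of injections between locally free $\hH$-modules.

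For $\Loc$ the argument is immediate: localization at the central element $\veps \in Z(\hH)$ is an exact functor on all of $\rep(\hH)$, since $\FF\lpw{\veps}$ is a flat (indeed a localization) $\FF\pow{\veps}$-module and $\tH = \hH_\veps = \hH \otimes_R K$ by Proposition~\ref{prop:hH-basic}(c). So $\Loc$ is exact even without restricting to locally free modules. For $\Red$, I would argue as follows. Given a short exact sequence $0 \to L \to M \to N \to 0$ in $\rep_\lf(\hH)$, by Proposition~\ref{prop:hH-pdim}(b) every submodule of a locally free module is locally free, so $L$ is locally free, and then $N$ is locally free as well. Applying $\Red = -\otimes_\hH H$ gives a right-exact sequence, and the only possible failure of left-exactness is measured by $\Tor_1^{\hH}(H,N)$. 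Using the $\hH\text{-}\hH$-bimodule presentation $0 \to \hH\otimes_\hS\hB\otimes_\hS\hH \to \hH\otimes_\hS\hH \to \hH \to 0$ from the proof of Proposition~\ref{prop:hH-pdim}, tensoring with $N$ over $\hH$ gives the standard projective resolution $0 \to \hH\otimes_\hS\hB\otimes_\hS N \to \hH\otimes_\hS N \to N \to 0$; since $N$ is locally free, i.e.\ free over $R = \FF\pow\veps$ by Lemma~\ref{lem:free1}, and $\hH$ is free over $R$ by Proposition~\ref{prop:hH-basic}(b), one sees that $\Red$ kills this resolution compatibly, and more directly that $\Tor_1^{\hH}(H, N) = \Tor_1^{\hH}(\hH/\veps\hH, N)$ vanishes because $\veps$ acts as a nonzerodivisor on the free $R$-module $N$. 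Concretely, $0 \to N \xrightarrow{\veps} N \to N/\veps N \to 0$ is exact, so $\Tor_1^\hH(\hH/\veps\hH, N) = \{x \in N : \veps x = 0\} = 0$.

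Putting these together: for $0 \to L \to M \to N \to 0$ with all terms locally free, $\Tor_1^{\hH}(H,N) = 0$ gives exactness of $0 \to \Red(L) \to \Red(M) \to \Red(N) \to 0$, so $\Red$ is exact; and $\Loc$ is exact by flatness of $K$ over $R$. I expect the only mild subtlety to be the bookkeeping point that one must first invoke Proposition~\ref{prop:hH-pdim}(b) to know that the kernel term $L$ in a short exact sequence inside $\rep_\lf(\hH)$ is again locally free, so that the statement even makes sense as a statement about exact sequences in the exact category $\rep_\lf(\hH)$; the vanishing of $\Tor_1$ itself is the routine computation with $\veps$ a nonzerodivisor on free $R$-modules.
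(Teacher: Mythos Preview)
Your proof is correct, but it takes a different route from the paper's for the functor $\Red$. Both you and the paper begin by noting that $\Red$ and $\Loc$ are right exact as tensor functors, and for $\Loc$ your flatness argument is essentially the same observation the paper makes. For $\Red$, however, the paper invokes Lemma~\ref{lem:functors1}: since $\rkv_\hH$ is additive on short exact sequences in $\rep_\lf(\hH)$ and $\rkv_\hH(M)=\rkv_H(\Red(M))$, the total $\FF$-dimension of $\Red(M)$ equals the sum of those of $\Red(L)$ and $\Red(N)$, so the right-exact sequence $\Red(L)\to\Red(M)\to\Red(N)\to 0$ is forced to be exact on the left by a dimension count. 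Your approach instead computes $\Tor_1^\hH(\hH/\veps\hH,N)$ directly from the resolution $0\to\hH\xrightarrow{\veps}\hH\to H\to 0$, using that $\veps$ is a nonzerodivisor on the free $R$-module $N$; this is clean, self-contained, and arguably more conceptual, while the paper's argument is shorter given that Lemma~\ref{lem:functors1} is already in hand.

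Two small remarks. First, your opening sentence about recognizing $\Red$ as base change along a \emph{flat} ring map is misleading: the map $\hH\to H=\hH/\veps\hH$ is not flat, and indeed your actual argument does not use flatness but rather the vanishing of the specific $\Tor_1$ term for locally free $N$. Second, the detour through the tensor-algebra bimodule resolution is unnecessary; the short resolution $0\to\hH\xrightarrow{\veps}\hH\to H\to 0$ alone suffices, as you yourself observe in the last line of that paragraph.
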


\begin{proof}
Being tensor functors, both $\Red$ and $\Loc$ are right exact.
The exactness follows now from
Lemma~\ref{lem:functors1} due to dimension reasons.
\end{proof}

\begin{Lem}\label{lem:Proj-bij}
$\Red$ and $\Loc$ induce bijections
\[
\xymatrix{
\{ \text{projective $\hH$-modules} \}/\!\!\cong \;\;
\ar[r]\ar[d] & \;\;
\{ \text{projective $\tH$-modules} \}/\!\!\cong 
\\
\{ \text{projective $H$-modules} \}/\!\!\cong .
}
\]
\end{Lem}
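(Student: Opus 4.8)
The plan is to identify the indecomposable projective modules over all three algebras explicitly and to check that $\Red$ and $\Loc$ carry each indecomposable projective of $\hH$ to an indecomposable projective of $H$ resp.\ $\tH$, in a way that is clearly bijective on isoclasses and hence extends additively to all projectives (using the Krull--Remak--Schmidt property, available for all three algebras: for $\hH$ by Remark~\ref{Rem-cplnoethA}, for $H$ as a finite-dimensional algebra, and for $\tH$ likewise). By Proposition~\ref{prop:hH-basic}(e) the indecomposable projective $\hH$-modules are the $\hH e_i$ for $i\in I$, and correspondingly $He_i$, $\tH e_i$ are the indecomposable projectives of $H$ and $\tH$.

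First I would observe that the tensor functors are right exact and, more importantly, that they send projectives to projectives: $\Red(\hH e_i)=H\otimes_\hH\hH e_i\cong He_i$ and $\Loc(\hH e_i)=\tH\otimes_\hH\hH e_i\cong\tH e_i$, since $H$ and $\tH$ are $H$-$\hH$- resp.\ $\tH$-$\hH$-bimodules and $-\otimes_\hH\hH e_i$ is just the idempotent-truncation functor $He_i$-part. In particular $\Red$ and $\Loc$ restrict to well-defined maps on isoclasses of projectives. Next I would check indecomposability of the images: $\End_\hH(\hH e_i)=e_i\hH e_i=\FF\pow{\veps_i}$ is local, $\End_H(He_i)=e_iHe_i=\FF[\eps_i]/(\eps_i^{c_i})$ is local, and $\End_{\tH}(\tH e_i)=e_i\tH e_i=\FF\lpw{\veps_i}$ is a field; so all three families consist of pairwise non-isomorphic indecomposables (non-isomorphic because applying $\Red$ or $\Loc$ to the rank vector, via Lemma~\ref{lem:functors1}, distinguishes the $e_i$, or more directly because $e_i(-)e_j$ separates them). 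Thus $\hH e_i\mapsto He_i$ and $\hH e_i\mapsto\tH e_i$ are bijections between the three sets of indecomposable projectives, and by Krull--Remak--Schmidt they extend uniquely to bijections on all (basic, or all) projective modules, compatible with direct sums; commutativity of the triangle is immediate since all three assignments are $i\mapsto(\text{the }i\text{-th indecomposable projective})$.

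The one point requiring a little care, and the \emph{main obstacle}, is to produce the \emph{inverse} maps — i.e.\ to see that every indecomposable projective $H$-module and every indecomposable projective $\tH$-module actually arises this way, equivalently that $\Red$ and $\Loc$ are surjective on isoclasses of projectives. For $H$ this is clear because $H=\hH/(\veps\hH)$ by Proposition~\ref{prop:hH-basic}(d), so $He_i=\Red(\hH e_i)$ exhausts the indecomposable projective $H$-modules. For $\tH$ one uses Proposition~\ref{prop:hH-basic}(c): $\tH=\hH_\veps$, so $\tH e_i=\Loc(\hH e_i)$ and again these are all of them. Since in each case the number of indecomposable projectives is $|I|$ on both sides and the map is injective (endomorphism rings, or ranks, are preserved faithfully enough to separate the $e_i$), bijectivity follows. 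I would conclude by noting that the diagram of bijections commutes on the nose because all three are the tautological assignment $He_i \leftrightarrow \hH e_i \leftrightarrow \tH e_i$ indexed by $I$, and extend additively via Krull--Remak--Schmidt.
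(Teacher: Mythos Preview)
Your proposal is correct and follows essentially the same approach as the paper: compute $\Red(\hH e_i)\cong He_i$ and $\Loc(\hH e_i)\cong\tH e_i$, then conclude by additivity. The paper's own proof is two lines and leaves the injectivity, surjectivity, and Krull--Remak--Schmidt bookkeeping implicit, whereas you spell these out; your remarks about commutativity of a triangle are harmless but unnecessary since the lemma only asserts two separate bijections.
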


\begin{proof}
We have
\[
\Red(\hH e_i) = (\hH e_i)/(\veps \hH e_i) \cong H e_i
\text{\qquad and \qquad}
\Loc(\hH e_i) = (\hH e_i)_\veps \cong \tH e_i.
\]
Now the result follows by additivity.
\end{proof}

\begin{Lem}\label{lem:dense}
The functors $\Red$ and $\Loc$ are dense.
\end{Lem}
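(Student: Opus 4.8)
The plan is to handle the two functors separately, since density of $\Loc$ reduces to finding $\hH$-lattices inside $\tH$-modules, while density of $\Red$ is a lifting argument along projective resolutions.

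For $\Loc$: given $N \in \rep(\tH)$, I would choose generators $n_1,\dots,n_r$ of $N$ as a $\tH$-module and set $M := \sum_{k=1}^{r} \hH n_k \subseteq N$. This is a finitely generated $\hH$-submodule of $N$; since $\hH$ is free of finite rank over $R = \FF\pow{\veps}$ by Proposition~\ref{prop:hH-basic}(b), $M$ is finitely generated over $R$, and it is $\veps$-torsion-free because $N$, being a finite-dimensional $K = \FF\lpw{\veps}$-vector space, is. A finitely generated torsion-free module over the discrete valuation ring $R$ is free, so $M$ is an $\hH$-lattice and hence locally free by Proposition~\ref{prop:free2}. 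Finally $\Loc(M) = M_\veps$ contains every $n_k$ and is contained in $N_\veps = N$, so $\Loc(M) = N$. This settles the easy half.

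For $\Red$: fix $\bar N \in \rep_\lf(H)$. By Proposition~\ref{prop:GLS1}(a) we have $\pdim_H(\bar N) \le 1$, so there is a projective resolution $0 \to P_1 \xrightarrow{f} P_0 \to \bar N \to 0$ with $P_0, P_1$ projective $H$-modules. Using Lemma~\ref{lem:Proj-bij} I would pick projective $\hH$-modules $\hat P_0, \hat P_1$ with $\hat P_i/(\veps\hat P_i) \cong P_i$, and lift $f$ to $\hat f \colon \hat P_1 \to \hat P_0$: this is possible because $\hat P_1$ is projective and the restriction map $\Hom_\hH(\hat P_1,\hat P_0) \to \Hom_\hH(\hat P_1,P_0) = \Hom_H(P_1,P_0)$ is surjective. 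Set $M := \Coker(\hat f)$. Right exactness of $- \otimes_\hH H$ together with $\hat f \otimes_\hH H = f$ gives $\Red(M) = M/(\veps M) \cong \Coker(f) = \bar N$, so the only thing left is to show that $M$ is locally free.

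This last step is the main obstacle. First, $\Ker(\hat f)$ is a submodule of the locally free module $\hat P_1$, hence locally free by Proposition~\ref{prop:hH-pdim}(b), in particular $\veps$-torsion-free; since $f$ is injective, any $x \in \Ker(\hat f)$ maps to $0$ in $P_1$ and so lies in $\veps\hat P_1$, and dividing by $\veps$ (using that $\hat P_0$ is $\veps$-torsion-free) one gets $\Ker(\hat f) = \veps\Ker(\hat f)$, whence $\Ker(\hat f) = 0$ by Nakayama, as $\Ker(\hat f)$ is finitely generated over the local ring $R$. Thus $0 \to \hat P_1 \xrightarrow{\hat f} \hat P_0 \to M \to 0$ is exact. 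Applying the snake lemma to multiplication by $\veps$ on this sequence and using that $\veps$ acts injectively on $\hat P_0$ and $\hat P_1$, one identifies the $\veps$-torsion submodule of $M$ with $\Ker(f \colon P_1 \to P_0) = 0$. Hence $M$ is $\veps$-torsion-free and finitely generated over $R$, therefore free over $R$, and so locally free by Lemma~\ref{lem:free1}. This shows $\Red$ is dense. The crux, as noted, is precisely the torsion-freeness of the lifted cokernel, and it genuinely relies on $\pdim_H(\bar N) \le 1$ (so that $f$ is injective), not merely on the existence of a projective presentation.
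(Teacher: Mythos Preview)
Your argument is correct. Both halves work, and in fact your proof for $\Loc$ is precisely the standard argument behind the reference the paper cites (\cite[Proposition~23.16]{CR81}): pick an $\hH$-stable $R$-lattice inside the $\tH$-module.

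For $\Red$, however, you take a genuinely different route from the paper. The paper exploits directly the description of locally free modules as tuples $((M_i)_{i\in I},(M_{ij})_{(i,j)\in\Ome})$: given $\bar N\in\rep_\lf(H)$ with $\bar N_i=H_i^{m_i}$, one simply lifts each structure map $\bar N_{ij}\colon {_iH_j}\otimes_{H_j}H_j^{m_j}\to H_i^{m_i}$ to a map $\hat N_{ij}\colon {_i\hH_j}\otimes_{\hH_j}\hH_j^{m_j}\to \hH_i^{m_i}$, which is possible because the source is a free $\hH_i$-module and $\hH_i\twoheadrightarrow H_i$. The lifted tuple is manifestly locally free by construction, with no further verification needed. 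Your approach instead lifts a projective resolution and then has to \emph{prove} local freeness of the cokernel via the snake-lemma computation of $\veps$-torsion. This is slicker in that it uses nothing about the explicit shape of $H$ and $\hH$; it would work for any surjection of Noetherian $R$-algebras $\hH\twoheadrightarrow H$ with kernel $\veps\hH$, provided the target module has projective dimension at most one. The paper's argument is more elementary but tied to the quiver-with-relations description. Your observation that the injectivity of $f$ (i.e., $\pdim_H\bar N\le 1$) is essential for the torsion computation is exactly the point where the two arguments meet: the paper's direct lift automatically produces a locally free module, whereas your homological lift only does so because the resolution has length one.
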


\begin{proof}
Let $M = ((M_i)_{i \in I},(M_{ij})_{(i,j) \in \Omega}) \in \rep_\lf(H)$.
We can assume that for each $i$ we have 
$M_i = H_i^{m_i}$ for some $m_i \ge 0$.
Then the $H_i$-linear map $M_{ij}\df {_i}H_j \otimes_{H_j} H_j^{m_j} \to H_i^{m_i}$
can be lifted to an $\hH_i$-linear map
$\widehat{M}_{ij}\df {_i}\hH_j \otimes_{\hH_j} \hH_j^{m_j} \to \hH_i^{m_i}$.
This yields a representation $\widehat{M} \in \rep_\lf(\hH)$ with
$\Red(\widehat{M}) \cong M$.
Thus $\Red$ is dense.

On the other hand, the denseness of $\Loc$ is a special case of
\cite[Proposition~23.16]{CR81}.
\end{proof}

\subsection{Example}\label{subsec:example} 
Consider
\[
C=\begin{pmatrix} 2 & -1\\ -1&2\end{pmatrix},
\qquad
D=\begin{pmatrix} 1 & 0\\ 0& 1\end{pmatrix}
\]
and $\Ome=\{(1,2)\}$.
Thus, $\hH_R(C,D,\Ome)$ is isomorphic to the completed path algebra over the field $\FF$ of the quiver with relations 
  \[
    \xymatrix{\ar@(ul,dl)[]_{\veps_1} 1&\ar[l]_\alp 2\ar@(ur,dr)[]^{\veps_2}}
    \quad\text{and}\quad \veps_1\alp-\alp\veps_2,
  \]  
  whilst $\tH_K(C,D,\Ome)$ is the path algebra over the field $K$
  of the quiver $\xymatrix{1&\ar[l]_\alp 2}$, and $H_\FF(C,D,\Ome)$ is the
  path algebra over $\FF$ for the same quiver.
  Now, a locally free $\hH$-module $M$ of rank $(1,1)$ is determined precisely
   by an element $m\in\FF\pow{\veps_1}$, which defines the ``structure map'' in
\[
  \Hom_{\FF\pow{\veps_1}}({_1\hH_2}\otimes_{\FF\pow{\veps_2}}\FF\pow{\veps_2},\FF\pow{\veps_1})
\]
where
\[ 
{_1\hH_2}
  \otimes_{\FF\pow{\veps_2}}\FF\pow{\veps_2} \cong \FF\pow{\veps_1}
\]
as $\FF\pow{\veps_1}$-modules.
In this sense the $\veps_1^k$ with $k \ge 0$  represent the isoclasses of
indecomposable $\hH$-modules with rank $(1,1)$.
Let $M(k)$ be the indecomposable $\hH$-module 
represented by $\veps_1^k$.
Note that $M(k)$ is projective if and only if $k=0$. 
Now
$\Loc(M(k))$ is isomorphic to the projective $\tH$-module $\tH e_2$
for all $k \ge 0$.
On the other hand, we have 
$$
\Red(M(k)) \cong
\begin{cases}
He_2 & \text{if $k = 0$},
\\
E_1 \oplus E_2 & \text{otherwise}.
\end{cases}
$$
(Note that in this example we have $E_i = S_i$ for $i=1,2$.)

The above example shows the following:
\bit

\item
Neither $\Red$ nor $\Loc$ respects isomorphism classes of modules.

\item
$\Red$ does not preserve indecomposability.
(To see this, take $\Red(M(k))$ for $k \ge 1$.)

\item
$\Loc$ might map a non-rigid module to a rigid module.
(Take $\Loc(M(k))$ for $k \ge 1$.)

\item
Neither $\Red$ nor $\Loc$ is full.
(Take $\End_\hH(M(k)) \to \End_H(\Red(M(k)))$ for $k \ge 1$ and 
take any $\End_\hH(M(k)) \to \End_\tH(\Loc(M(k)))$.)

\item
$\Red$ is not faithful.
(Take any $\End_\hH(M(k)) \to \End_H(\Red(M(k)))$.)

\item
In view of Lemma~\ref{lem:homfree} let us remark that there
are $M,N \in \rep_\lf(\hH)$ such that
$\Ext_\hH^1(M,N)$ is not free
as an $R$-module.
(For example, take $M = N = M(1)$. 
In this case, one easily checks that
$\dim_\FF \Ext_\hH^1(M,M) = 1$. So $\Ext_\hH^1(M,M)$ cannot
be a free $R$-module.)

\eit
%

\subsection{Properties of the reduction functor}

\begin{Lem}\label{lem:functorRed1}
Let $M,N \in \rep_\lf(\hH)$, and let 
$$
0 \to P_1 \to P_0 \to M \to 0
$$
be a projective resolution of $M$.
This yields a projective resolution
$$
0 \to \bar{P_1} \to \bar{P_0} 
\to \bar{M} \to 0
$$
of $\bar{M}$ and a commutative diagram 
$$
\xymatrix@-0.6pc{
0 \ar[r] & \Hom_\hH(M,N) \ar[r]\ar[d]^{\Red_{M,N}} & 
\Hom_\hH(P_0,N) \ar[r]\ar[d]^{\Red_{P_0,N}} & 
\Hom_\hH(P_1,N) \ar[r]\ar[d]^{\Red_{P_1,N}} & 
\Ext_\hH^1(M,N) \ar[r]\ar[d]^{\Red_{M,N}^1} & 0
\\
0 \ar[r] & \Hom_H(\bar{M},\bar{N}) \ar[r] & 
\Hom_H(\bar{P_0},\bar{N}) \ar[r] & 
\Hom_H(\bar{P_1},\bar{N}) \ar[r] & 
\Ext_H^1(\bar{M},\bar{N}) \ar[r] & 0
}
$$
of $R$-linear natural maps
with exact rows.
Furthermore, the following hold:
\bit

\item[(a)]
$\Red_{P_0,N}$, $\Red_{P_1,N}$ and
$\Red_{M,N}^1$ are surjective.

\item[(b)]
If $\Ext_H^1(\bar{M},\bar{N}) =0$, then
$\Ext_\hH^1(M,N) = 0$ and $\Red_{M,N}$ is surjective.

\item[(c)]
$\Ext^1_\hH(M,N)=0$ if and only if 
$\Ext^1_H(\bar{M},\bar{N})=0$.

\eit
\end{Lem}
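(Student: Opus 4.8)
The plan is to set up the commutative diagram first and then read off all three statements from it. Start with a projective resolution $0 \to P_1 \to P_0 \to M \to 0$ of $M$ in $\rep_\lf(\hH)$; such a resolution exists with $P_0, P_1$ projective by Proposition~\ref{prop:hH-pdim}(a), and $P_1$ is a submodule of the locally free module $P_0$, hence locally free by Proposition~\ref{prop:hH-pdim}(b). Applying $\Red = H \otimes_\hH -$, which is exact by Lemma~\ref{lem:exact}, yields a complex $0 \to \bar{P_1} \to \bar{P_0} \to \bar M \to 0$; it is exact again by Lemma~\ref{lem:exact}, and by Lemma~\ref{lem:Proj-bij} the modules $\bar{P_0} \cong \Red(P_0)$ and $\bar{P_1} \cong \Red(P_1)$ are projective $H$-modules, so this is a projective resolution of $\bar M$. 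Applying $\Hom_\hH(-,N)$ to the first resolution and $\Hom_H(-,\bar N)$ to the second gives the two rows; both are exact because $\pdim M \le 1$ and $\pdim \bar M \le 1$, so the connecting maps to $\Ext^1$ are surjective and there is nothing further on the right. The vertical maps $\Red_{X,N}\colon \Hom_\hH(X,N) \to \Hom_H(\bar X,\bar N)$, $f \mapsto \bar f = H \otimes_\hH f$, are $R$-linear and natural in $X$; naturality of the resolution maps gives commutativity of the two left squares, and this induces the vertical map $\Red^1_{M,N}$ on cokernels, giving the commutative square on the right as well.

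For part (a): the modules $P_0, P_1$ are (finitely generated) projective $\hH$-modules, so it suffices to treat $X = \hH e_i$. Then $\Hom_\hH(\hH e_i, N) \cong e_i N$ and $\Hom_H(\overline{\hH e_i}, \bar N) \cong \Hom_H(H e_i, \bar N) \cong e_i \bar N = e_i (N/\veps N)$, and under these identifications $\Red_{\hH e_i, N}$ is the canonical surjection $e_i N \twoheadrightarrow e_i N / \veps(e_i N)$; summing over a decomposition of $P_0$ resp.\ $P_1$ into indecomposable projectives gives surjectivity of $\Red_{P_0,N}$ and $\Red_{P_1,N}$. Surjectivity of $\Red^1_{M,N}$ then follows from the diagram: $\Ext^1_\hH(M,N)$ is the cokernel of $\Hom_\hH(P_0,N) \to \Hom_\hH(P_1,N)$, $\Ext^1_H(\bar M,\bar N)$ is the cokernel of $\Hom_H(\bar{P_0},\bar N) \to \Hom_H(\bar{P_1},\bar N)$, and $\Red_{P_1,N}$ is surjective, so the induced map on cokernels is surjective.

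For part (b): if $\Ext^1_H(\bar M,\bar N) = 0$, then $\Hom_H(\bar{P_0},\bar N) \to \Hom_H(\bar{P_1},\bar N)$ is surjective. I want to conclude the same upstairs, i.e.\ that $\Hom_\hH(P_0,N) \to \Hom_\hH(P_1,N)$ is surjective. Here is where the key input enters: all four Hom-modules in question are free of finite rank over $R = \FF\pow{\veps}$ by Lemma~\ref{lem:homfree} (using that $N$ is locally free), and $\Red_{P_0,N}$, $\Red_{P_1,N}$ are precisely the reductions modulo $\veps$ of these free $R$-modules, compatibly with the map $\Hom_\hH(P_0,N) \to \Hom_\hH(P_1,N)$ and its reduction $\Hom_H(\bar{P_0},\bar N) \to \Hom_H(\bar{P_1},\bar N)$. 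So the hypothesis says a map between finitely generated free $R$-modules becomes surjective after $\otimes_R R/\veps R$; since $R$ is a complete (Noetherian) local ring with maximal ideal $(\veps)$, Nakayama's lemma gives that the original map is surjective. Hence $\Ext^1_\hH(M,N) = 0$, and then from the diagram the map $\Red_{M,N}\colon \Hom_\hH(M,N) \to \Hom_H(\bar M,\bar N)$ is a map between kernels of two surjections whose middle and right vertical maps are surjective — a diagram chase (or the snake lemma applied to the obvious short exact sequences) gives that $\Red_{M,N}$ is surjective. Part (c) is then immediate: the "only if" direction follows from (a) (surjectivity of $\Red^1_{M,N}$ with vanishing source forces vanishing target), and the "if" direction is the first assertion of (b).

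The main obstacle is the Nakayama step in part (b): one must be careful that the vertical maps $\Red_{P_0,N}$, $\Red_{P_1,N}$ really are the scalar-reduction maps $(-)\otimes_R R/\veps R$ on the relevant free $R$-modules, compatibly with the horizontal maps, so that the hypothesis translates literally into "cokernel killed by $\veps$." This identification uses the explicit computation $\Hom_\hH(\hH e_i,N) \cong e_i N$ together with the fact that $\overline{\hH e_i} \cong He_i$ and $e_i \bar N \cong \overline{e_i N}$ as $R/\veps R$-modules; once this is pinned down for indecomposable projectives it extends by additivity, and the rest is formal homological algebra.
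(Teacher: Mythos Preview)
Your proposal is correct and follows the standard approach: the paper simply refers to \cite[Proposition~2.2(c),(d)]{GLS2}, whose argument is precisely the one you give (reduction of the projective resolution via exactness of $\Red$, explicit identification $\Hom_\hH(\hH e_i,N)\cong e_iN$ to see that the vertical maps on projectives are reduction modulo $\veps$, and then Nakayama over the complete local ring $R=\FF\pow{\veps}$ together with a diagram chase). Your care in verifying that $\Red_{P_k,N}$ really is $(-)\otimes_R R/\veps R$ is exactly the point that makes the Nakayama step go through.
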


\begin{proof}
The proof is almost identical to the proof of \cite[Proposition~2.2(c),(d)]{GLS2}.
\end{proof}

\begin{Cor}\label{cor:Redrigid}
For $M \in \rep_\lf(\hH)$ the following are equivalent:
\bit

\item[(a)]
$M$ is rigid;

\item[(b)]
$\Red(M)$ is rigid.

\eit
\end{Cor}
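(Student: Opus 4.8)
The plan is to derive Corollary~\ref{cor:Redrigid} directly from Lemma~\ref{lem:functorRed1}(c), which is precisely the statement that $\Ext^1_\hH(M,M)=0$ if and only if $\Ext^1_H(\bar M,\bar N)=0$ with $N=M$.

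First I would recall that $M$ is rigid by definition means $\Ext^1_\hH(M,M)=0$, and that $\Red(M)=\bar M$, so $\Red(M)$ rigid means $\Ext^1_H(\bar M,\bar M)=0$. Applying Lemma~\ref{lem:functorRed1}(c) with $N=M$ gives exactly the equivalence of these two vanishing statements, which is the claim. That is essentially the whole argument; there is no real obstacle since the work was already done in Lemma~\ref{lem:functorRed1}.

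Concretely:

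\begin{proof}
Apply Lemma~\ref{lem:functorRed1}(c) with $N = M$. This gives
\[
\Ext^1_\hH(M,M) = 0 \quad\Longleftrightarrow\quad \Ext^1_H(\bar M,\bar M) = 0.
\]
Since $\Red(M) = \bar M$, the left-hand side says that $M$ is rigid and the right-hand side says that $\Red(M)$ is rigid. Hence (a) and (b) are equivalent.
\end{proof}

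If one wanted to make the corollary self-contained without invoking part (c) of the lemma, one could instead argue from parts (a) and (b): the map $\Red^1_{M,M}\colon \Ext^1_\hH(M,M)\to\Ext^1_H(\bar M,\bar M)$ is surjective by (a), which immediately gives the implication (a)$\Rightarrow$(b); and (b) says $\Ext^1_H(\bar M,\bar M)=0$, so part (b) of the lemma yields $\Ext^1_\hH(M,M)=0$, which is (a). But since Lemma~\ref{lem:functorRed1}(c) already packages both directions, the one-line proof above is cleanest.
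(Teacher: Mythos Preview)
Your proof is correct and matches the paper's approach: the paper states this as an immediate corollary of Lemma~\ref{lem:functorRed1} with no further proof, and applying part~(c) with $N=M$ is exactly the intended argument.
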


\begin{Cor}\label{cor:hH-unique}
Let $M \in\rep_\lf(\hH)$ be rigid.
Then 
\[
\End_H(\Red(M)) \cong \End_\hH(M)/(\veps \End_\hH(M)).
\]
\end{Cor}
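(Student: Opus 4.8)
The plan is to show that the reduction functor induces a \emph{ring} isomorphism
$\bar\rho\colon \End_\hH(M)/(\veps\End_\hH(M)) \to \End_H(\Red(M))$.
First I would observe that since $\Red$ is a (tensor) functor, $f \mapsto \Red(f)$ is a unital ring homomorphism $\rho := \Red_{M,M}\colon \End_\hH(M) \to \End_H(\bar M)$, and it is $R$-linear by Lemma~\ref{lem:functorRed1}. Since $\veps$ annihilates $\bar M = M/(\veps M)$, it also annihilates $\End_H(\bar M)$, so $\rho$ factors through a ring homomorphism $\bar\rho$ on $\End_\hH(M)/(\veps\End_\hH(M))$. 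Because $M$ is rigid, Corollary~\ref{cor:Redrigid} gives that $\bar M = \Red(M)$ is rigid, i.e.\ $\Ext^1_H(\bar M,\bar M)=0$; then Lemma~\ref{lem:functorRed1}(b) yields both $\Ext^1_\hH(M,M)=0$ and surjectivity of $\rho$, hence of $\bar\rho$.

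It remains to prove that $\ker\rho = \veps\End_\hH(M)$. Here I would use the commutative diagram of Lemma~\ref{lem:functorRed1} with $N=M$, applied to a projective resolution $0 \to P_1 \to P_0 \to M \to 0$ (such a resolution with $P_0,P_1$ projective exists since $\pdim_\hH M \le 1$ by Proposition~\ref{prop:hH-pdim}(a)). For a projective $P = \hH e_i$, the identifications $\Hom_\hH(\hH e_i,M) \cong e_iM$ and $\Hom_H(\overline{\hH e_i},\bar M) = \Hom_H(He_i,\bar M) \cong e_i\bar M = (e_iM)/(\veps e_iM)$ show that $\Red_{P,M}$ is simply reduction modulo $\veps$, with kernel $\veps\Hom_\hH(P,M)$; by additivity this holds for every finitely generated projective $P$. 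Since $\Ext^1_\hH(M,M)=0$, applying $\Hom_\hH(-,M)$ to the resolution gives a short exact sequence $0 \to \End_\hH(M) \to \Hom_\hH(P_0,M) \to \Hom_\hH(P_1,M) \to 0$ of finitely generated free $R$-modules (freeness by Lemma~\ref{lem:homfree}); as $R$ is a (discrete) valuation ring this sequence of $R$-modules splits, so it stays exact after $-\otimes_R\FF$. Using the identifications above together with the commutativity of the diagram, this tensored sequence is identified with $0 \to \End_H(\bar M) \to \Hom_H(\bar P_0,\bar M) \to \Hom_H(\bar P_1,\bar M) \to 0$, the exactness on the right coming from $\Ext^1_H(\bar M,\bar M)=0$. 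Chasing the left-hand square then exhibits $\rho$ as the composite $\End_\hH(M) \twoheadrightarrow \End_\hH(M)\otimes_R\FF \hookrightarrow \End_H(\bar M)$, so $\ker\rho = \veps\End_\hH(M)$ and $\bar\rho$ is injective. Hence $\bar\rho$ is a ring isomorphism.

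The only real work is in this last paragraph: making precise that each $\Red_{P_i,M}$ is literally reduction modulo $\veps$ on the relevant free Hom-modules, and that the (truncated) bottom row of the diagram in Lemma~\ref{lem:functorRed1} is obtained from the $\hH$-side short exact sequence by base change along $R \to \FF$. Everything else is formal once Corollary~\ref{cor:Redrigid}, Lemma~\ref{lem:functorRed1} and Lemma~\ref{lem:homfree} are available; in particular, $\bar\rho$ respects multiplication automatically because $\rho$ does.
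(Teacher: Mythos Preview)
Your proof is correct and shares the paper's outline: obtain surjectivity of $\Red_{M,M}$ from Lemma~\ref{lem:functorRed1}(b), then identify the kernel as $\veps\End_\hH(M)$. The difference lies in the kernel computation. The paper dispatches it in one line (``by construction''), relying implicitly on the description $\Red(M)=M/\veps M$: then $\Red_{M,M}(f)=0$ iff $f(M)\subseteq\veps M$, and since $M$ is locally free, hence $R$-free by Lemma~\ref{lem:free1} and in particular $\veps$-torsion-free, any such $f$ is of the form $\veps g$ with $g\in\End_\hH(M)$. Your projective-resolution argument is valid but more elaborate than needed here; it is essentially the proof of the later Lemma~\ref{lem:hH-unique} (which treats general $M,N$ with $\Ext^1_\hH(M,N)=0$) specialized to $M=N$. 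What your route buys is an explicit verification that $\FF\otimes_R\End_\hH(M)\hookrightarrow\End_H(\bar M)$ without invoking torsion-freeness directly; what the paper's route buys is brevity.
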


\begin{proof}  
By Lemma~\ref{lem:functorRed1}(b)
there is a surjective $R$-algebra homomorphism
\[
\Red_{M,M}\df \End_\hH(M) \to \End_H(\Red(M)).
\]
By construction we have 
\[
\Ker(\Red_{M,M}) = \veps\End_\hH(M).
\]
The result follows.
\end{proof}

\begin{Lem} \label{lem:re-basic}
For $M,N\in\rep_\lf(\hH)$ the natural map
\[
\Red_{M,N}^1\df \FF \otimes_R \Ext^1_\hH(M,N) \to \Ext^1_H(\Red(M),\Red(N)) 
\]
is an isomorphism of $\FF$-vector spaces.
\end{Lem}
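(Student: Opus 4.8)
The plan is to extract the claimed isomorphism from the commutative diagram with exact rows already set up in Lemma~\ref{lem:functorRed1}, after tensoring the top row with $\FF$ over $R$. First I would recall the standard projective resolution $0 \to P_1 \to P_0 \to M \to 0$ of $M \in \rep_\lf(\hH)$ furnished by Proposition~\ref{prop:hH-pdim}(a), together with its reduction $0 \to \bar{P_1} \to \bar{P_0} \to \bar{M} \to 0$, which is a projective resolution of $\bar M = \Red(M)$ by Lemma~\ref{lem:Proj-bij} and Lemma~\ref{lem:functorRed1}. The key observation is that, since $P_0$ and $P_1$ are projective and $N$ is locally free, Lemma~\ref{lem:homfree} tells us that $\Hom_\hH(P_0,N)$ and $\Hom_\hH(P_1,N)$ are free $R$-modules of finite rank; moreover the natural maps $\Red_{P_0,N}$ and $\Red_{P_1,N}$ are precisely the reduction-mod-$\veps$ maps, identifying $\Hom_H(\bar{P_k},\bar N) \cong \Hom_\hH(P_k,N)/(\veps\,\Hom_\hH(P_k,N)) \cong \FF \otimes_R \Hom_\hH(P_k,N)$. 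This is the essential point: for projective sources the reduction functor induces an isomorphism after applying $\FF \otimes_R -$.

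Next I would tensor the entire top row of the diagram in Lemma~\ref{lem:functorRed1} with $\FF$ over $R$. The relevant piece is the right-hand cokernel: $\Ext^1_\hH(M,N)$ is the cokernel of $\Hom_\hH(P_0,N) \to \Hom_\hH(P_1,N)$, and $\FF \otimes_R -$ is right exact, so $\FF \otimes_R \Ext^1_\hH(M,N)$ is the cokernel of the induced map $\FF \otimes_R \Hom_\hH(P_0,N) \to \FF \otimes_R \Hom_\hH(P_1,N)$. Under the identifications of the previous paragraph this induced map is exactly $\Hom_H(\bar{P_0},\bar N) \to \Hom_H(\bar{P_1},\bar N)$, whose cokernel is $\Ext^1_H(\bar M,\bar N) = \Ext^1_H(\Red(M),\Red(N))$ since $0 \to \bar{P_1} \to \bar{P_0} \to \bar M \to 0$ is a projective resolution. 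Chasing the diagram, the composite $\FF \otimes_R \Ext^1_\hH(M,N) \twoheadrightarrow \Ext^1_H(\Red(M),\Red(N))$ obtained this way agrees with $\Red^1_{M,N}$ (extended $\FF$-linearly from the surjection of Lemma~\ref{lem:functorRed1}(a)), and it is an isomorphism because both sides are cokernels of the same $\FF$-linear map.

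The main technical point to nail down carefully is the compatibility of the reduction maps $\Red_{P_k,N}$ with the identification $\FF \otimes_R \Hom_\hH(P_k,N) \cong \Hom_\hH(P_k,N)/(\veps)$ — i.e.\ that reducing a homomorphism modulo $\veps$ is the same as applying $\Red = H \otimes_\hH -$ on the level of $\Hom$ when the source is projective. For $P_k = \bigoplus \hH e_{i}$ this reduces, via the isomorphism $\Hom_\hH(\hH e_i, N) \cong e_i N$ used in the proof of Lemma~\ref{lem:homfree}, to the statement that $e_i N / (\veps\, e_i N) \cong e_i \bar N$, which is immediate from $\bar N = N/(\veps N)$ and $\veps$ being central. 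Everything else is a formal diagram chase; I do not anticipate any genuine obstacle, only the bookkeeping of making the identifications explicit. One should also note $\Ext^2$ does not intervene here even though $\gldim\hH$ can be $2$, because we only compare the degree-one cokernels and $\FF \otimes_R -$ is right exact — no flatness of $\Ext^1_\hH(M,N)$ over $R$ is needed, which is fortunate in view of the example in Section~\ref{subsec:example} showing $\Ext^1_\hH(M,N)$ need not be $R$-free.
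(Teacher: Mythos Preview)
Your proposal is correct and follows essentially the same route as the paper: tensor the four-term exact sequence from Lemma~\ref{lem:functorRed1} with $\FF$ over $R$, use right exactness to identify $\FF\otimes_R\Ext^1_\hH(M,N)$ as the cokernel of the induced map on $\Hom$'s, and invoke the natural isomorphism $\FF\otimes_R\Hom_\hH(P_k,N)\cong\Hom_H(\Red(P_k),\Red(N))$ for projective $P_k$. Your write-up is in fact more explicit than the paper's about verifying this last identification and about why no flatness hypothesis on $\Ext^1$ is needed.
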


\begin{proof}
As in Lemma~\ref{lem:functorRed1} there is 
an exact sequence 
\begin{equation*}
0 \to \Hom_\hH(M,N) \to \Hom_\hH(P_0,N) \to 
\Hom_\hH(P_1,N) \to \Ext_\hH^1(M,N) \to 0.
\end{equation*}
Since $\FF$ is not flat as an $R$-module, we can only use 
the right exactness of tensor product functors to obtain
an exact sequence
\begin{equation}\label{eq:1}
\FF \otimes_R \Hom_\hH(P_0,N) \to 
\FF \otimes_R \Hom_\hH(P_1,N) \to 
\FF \otimes_R \Ext_\hH^1(M,N) \to 0.
\end{equation}
It is straightforward to check that
there is a natural isomorphism
\[
\FF \otimes_R \Hom_\hH(P_i,N) \cong \Hom_H(\Red(P_i),\Red(N))
\]
of $\FF$-vector spaces
for $i = 0,1$.
(Here one uses that for an indecomposable $\hH$-module
$\hH e_i$ we have $\Red(\hH e_i) \cong H e_i$.)
Using the exact sequence (\ref{eq:1}) 
and the commutative diagram in Lemma~\ref{lem:functorRed1}
yields a 
natural isomorphism
\[
\FF \otimes_R \Ext_\hH^1(M,N) \cong \Ext_H^1(\Red(M),\Red(N)).
\]
\end{proof}

\begin{Lem} \label{lem:rig-indec}
$\Red$ maps indecomposable rigid modules to indecomposable 
rigid modules.
\end{Lem}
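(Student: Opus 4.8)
The statement to prove is Lemma~\ref{lem:rig-indec}: the reduction functor $\Red$ sends indecomposable rigid $\hH$-modules to indecomposable rigid $H$-modules. Rigidity of $\Red(M)$ is already handled by Corollary~\ref{cor:Redrigid}, so the whole content is indecomposability. The plan is to exploit Corollary~\ref{cor:hH-unique}, which identifies $\End_H(\Red(M))$ with $\End_\hH(M)/(\veps\End_\hH(M))$ for rigid $M$. So the task reduces to a purely ring-theoretic statement: if $\Lambda := \End_\hH(M)$ is the endomorphism ring of an indecomposable module over the semiperfect Noetherian $R$-algebra $\hH$, then $\Lambda/(\veps\Lambda)$ is again local.

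\medskip

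\textbf{Key steps.} First I would record that $M$ indecomposable means $\Lambda = \End_\hH(M)$ is local, by Remark~\ref{Rem-cplnoethA}. Second, I would observe that $\Lambda$ is a module-finite $R$-algebra: indeed $M \in \rep_\lf(\hH)$, so by Lemma~\ref{lem:homfree} (applied with $N = M$) the $R$-module $\End_\hH(M) = \Hom_\hH(M,M)$ is free of finite rank over $R = \FF\pow{\veps}$; in particular $\Lambda$ is complete in the $\veps$-adic topology and $\veps$ lies in its Jacobson radical (since $\Lambda$ is local and $\veps$ is a non-unit central element). Third — and this is the key point — in a local ring $\Lambda$ any quotient $\Lambda/J$ is again local, because local-ness is equivalent to $\Lambda/\rad(\Lambda)$ being a division ring, and $\Lambda/(\veps\Lambda)$ has the same unique maximal ideal image $\rad(\Lambda)/(\veps\Lambda)$ with quotient $\Lambda/\rad(\Lambda)$ still a division ring (here one uses $\veps \in \rad(\Lambda)$). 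So $\End_H(\Red(M)) \cong \Lambda/(\veps\Lambda)$ is local. Fourth, conclude via Remark~\ref{Rem-cplnoethA} applied to $H$ — or more simply, a module whose endomorphism ring is local is indecomposable — that $\Red(M)$ is indecomposable. Combined with Corollary~\ref{cor:Redrigid} for rigidity, this finishes the lemma.

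\medskip

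\textbf{Main obstacle.} There is no serious obstacle; the argument is essentially the observation that quotients of local rings are local, packaged through the already-proven Corollaries~\ref{cor:Redrigid} and~\ref{cor:hH-unique}. The only point requiring a little care is justifying $\veps \in \rad(\End_\hH(M))$ — i.e. that $\veps$ acts as a non-unit on the local ring $\End_\hH(M)$ — which follows because if $\veps$ were invertible on $M$ then $M$ would be an $\hH_\veps = \tH$-module (Proposition~\ref{prop:hH-basic}(c)), but every locally free $\hH$-module is $\veps$-torsion-free and $\veps$ acts nilpotently on $\Red(M) = M/\veps M \ne 0$; alternatively, since $M$ is a nonzero finitely generated module over the $\veps$-adically complete ring $\hH$, Nakayama gives $\veps M \ne M$, so the image of $\veps$ in the local ring $\End_\hH(M)$ cannot be a unit and hence lies in its radical. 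With that in hand the rest is immediate.
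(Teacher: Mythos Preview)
Your proof is correct and follows essentially the same route as the paper: both use that $\End_\hH(M)$ is local (Remark~\ref{Rem-cplnoethA}), that $\End_H(\Red(M))$ is a quotient of it (the paper cites Lemma~\ref{lem:functorRed1}(b) directly, you cite its consequence Corollary~\ref{cor:hH-unique}), and that nonzero quotients of local rings are local, with rigidity handled by Corollary~\ref{cor:Redrigid}. Your extra care in checking $\veps\in\rad(\End_\hH(M))$ is not strictly needed---it suffices to observe $\Red(M)\neq 0$ so the quotient ring is nonzero---but it does no harm.
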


\begin{proof}
Let $M \in \rep_\lf(\hH)$ be indecomposable rigid.
Then $\End_\hH(M)$ is a local ring, since $\rep(\hH)$ is
a Krull-Remak-Schmidt category.
Now Lemma~\ref{lem:functorRed1}(b) implies
that $\End_H(\Red(M))$ is isomorphic to a
factor ring of $\End_\hH(M)$.
Thus $\End_H(\Red(M))$ is also local, which implies that $\Red(M)$
is indecomposable.
We know already from Corollary~\ref{cor:Redrigid} that $\Red$ preserves rigidity.
\end{proof}

\begin{Lem} \label{lem:hH-unique}
Let $M, N\in\rep_\lf(\hH)$ with $\Ext_\hH^1(M,N) = 0$.
Then the natural map
\[
\Red_{M,N}\df \FF \otimes_R
\Hom_\hH(M,N) \to \Hom_H(\Red(M),\Red(N))
\]
is an isomorphism of $\FF$-vector spaces.
\end{Lem}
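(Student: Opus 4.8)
The plan is to run the same five-term exact sequence argument as in Lemma~\ref{lem:re-basic}, but now exploiting the extra hypothesis $\Ext_\hH^1(M,N)=0$ to improve right-exactness to genuine exactness. Start with a projective resolution $0\to P_1\to P_0\to M\to 0$ in $\rep_\lf(\hH)$ (which exists by Proposition~\ref{prop:hH-pdim}(a), and note $P_0,P_1$ are locally free). Applying $\Hom_\hH(-,N)$ gives the exact sequence
\[
0 \to \Hom_\hH(M,N) \to \Hom_\hH(P_0,N) \to \Hom_\hH(P_1,N) \to \Ext_\hH^1(M,N) = 0,
\]
so that in fact $\Hom_\hH(P_0,N)\to\Hom_\hH(P_1,N)$ is surjective. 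By Lemma~\ref{lem:homfree} all three nonzero terms are free $R$-modules of finite rank, so this is a short exact sequence of free $R$-modules, hence split; therefore it stays exact after applying $\FF\otimes_R-$, yielding
\[
0 \to \FF\otimes_R\Hom_\hH(M,N) \to \FF\otimes_R\Hom_\hH(P_0,N) \to \FF\otimes_R\Hom_\hH(P_1,N) \to 0.
\]

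Next I would identify the middle and right terms with the corresponding Hom-spaces over $H$. As already used in the proof of Lemma~\ref{lem:re-basic}, there is a natural isomorphism $\FF\otimes_R\Hom_\hH(P_i,N)\cong\Hom_H(\Red(P_i),\Red(N))$ for $i=0,1$, coming from $\Red(\hH e_i)\cong He_i$ together with additivity and the identification $\Hom_\hH(\hH e_i,N)\cong e_iN$, $\Hom_H(He_i,\bar N)\cong e_i\bar N$, $e_i\bar N\cong e_iN/(\veps e_iN)\cong\FF\otimes_R e_iN$. On the other side, Lemma~\ref{lem:functorRed1} provides the commutative ladder and in particular (using $\Ext_H^1(\bar M,\bar N)=\Red(\Ext^1)=0$ by Lemma~\ref{lem:functorRed1}(c), since $\Ext_\hH^1(M,N)=0$) the bottom row
\[
0 \to \Hom_H(\Red(M),\Red(N)) \to \Hom_H(\Red(P_0),\Red(N)) \to \Hom_H(\Red(P_1),\Red(N)) \to 0
\]
is exact. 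Comparing the two short exact sequences via the natural maps $\FF\otimes_R\Red_{P_i,N}$ (which are isomorphisms) and the five lemma forces $\FF\otimes_R\Red_{M,N}$ — i.e.\ the map called $\Red_{M,N}$ in the statement, after the identification $\FF\otimes_R\Hom_\hH(M,N)$ with its image — to be an isomorphism.

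The only point requiring a little care, and the one I expect to be the main obstacle, is the naturality/compatibility bookkeeping: one must check that the identifications $\FF\otimes_R\Hom_\hH(P_i,N)\cong\Hom_H(\Red(P_i),\Red(N))$ are compatible with the connecting maps induced by $P_1\to P_0$ on both rows, so that the comparison square actually commutes before invoking the five lemma. This is exactly the commutativity recorded in the diagram of Lemma~\ref{lem:functorRed1} tensored with $\FF$, so it is routine but should be spelled out. Everything else — splitting of a short exact sequence of free $R$-modules, right-exactness of $\FF\otimes_R-$, and Lemma~\ref{lem:functorRed1}(c) — is already available.
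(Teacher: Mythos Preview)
Your argument is correct. The five-lemma approach works exactly as you describe: the splitting of the short exact sequence of free $R$-modules ensures exactness after tensoring with $\FF$, the identifications $\FF\otimes_R\Hom_\hH(P_i,N)\cong\Hom_H(\Red(P_i),\Red(N))$ are precisely those established in the proof of Lemma~\ref{lem:re-basic}, and the commutativity of the comparison squares is inherited from the ladder in Lemma~\ref{lem:functorRed1}. No step fails.

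The paper, however, takes a shorter route. It observes that $\Hom_\hH(M,N)$ is a free $R$-module (Lemma~\ref{lem:homfree}) and computes its rank directly as $h=\bil{\rkv_\hH(M),\rkv_\hH(N)}$; hence $\dim_\FF\bigl(\FF\otimes_R\Hom_\hH(M,N)\bigr)=h$. On the other side, $\Ext_\hH^1(M,N)=0$ forces $\Ext_H^1(\Red(M),\Red(N))=0$ by Lemma~\ref{lem:functorRed1}(c), so Proposition~\ref{prop:GLS1}(c) gives $\dim_\FF\Hom_H(\Red(M),\Red(N))=h$ as well. The map is then an isomorphism for dimension reasons (surjectivity being supplied by Lemma~\ref{lem:functorRed1}(b)). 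So the paper trades your diagram chase for a dimension count via the Euler form; your approach is more self-contained in that it does not invoke the numerical identity $\rk_R\Hom_\hH(M,N)=\bil{\rkv_\hH(M),\rkv_\hH(N)}$, while the paper's is quicker once that identity is accepted.
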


\begin{proof}  
We know from Lemma~\ref{lem:homfree} that $\Hom_\hH(M,N)$ is a free $R$-module.
It is straightforward to check that its rank is 
\[
h := \bil{\rkv_\hH(M),\rkv_\hH(N)}.
\] 
By Lemma~\ref{lem:functorRed1}(c) we have moreover 
$\Ext^1_H(\Red(M),\Red(N))=0$. 
Thus we have 
\[
h = \dim_\FF \Hom(\Red(M),\Red(N)) 
\]
which implies the claim for dimension reasons.
\end{proof}

\begin{Prop}\label{prop:hH-uniqueb}
If $M,N\in\rep_\lf(\hH)$ are rigid with $\rkv_\hH(M)=\rkv_\hH(N)$, then $M\cong N$.
\end{Prop}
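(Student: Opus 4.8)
The plan is to transport the problem to the residue algebra $H$, where uniqueness of rigid locally free modules with a prescribed rank vector is already available (Remark~\ref{rem:runique}), and then to lift an isomorphism back up to $\hH$ using the homological comparison of Lemma~\ref{lem:functorRed1} together with a Nakayama argument over $R=\FF\pow{\veps}$.

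First I would observe that $\Red(M)$ and $\Red(N)$ are rigid locally free $H$-modules by Corollary~\ref{cor:Redrigid}, and that they share the same rank vector, since by Lemma~\ref{lem:functors1} we have $\rkv_H(\Red(M))=\rkv_\hH(M)=\rkv_\hH(N)=\rkv_H(\Red(N))$. Hence Remark~\ref{rem:runique} produces an isomorphism $\phi\colon\Red(M)\xrightarrow{\ \sim\ }\Red(N)$ in $\rep_\lf(H)$. The next step is to lift $\phi$: as $\Red(M)\cong\Red(N)$ is rigid, $\Ext^1_H(\Red(M),\Red(N))=0$, so Lemma~\ref{lem:functorRed1}(b) gives $\Ext^1_\hH(M,N)=0$ and surjectivity of $\Red_{M,N}\colon\Hom_\hH(M,N)\to\Hom_H(\Red(M),\Red(N))$ (this is also visible from Lemma~\ref{lem:hH-unique}), so I may choose $\psi\colon M\to N$ in $\rep_\lf(\hH)$ with $\Red(\psi)=\phi$.

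To see that $\psi$ is an isomorphism, I would apply the right exact functor $\Red$ to the exact sequence $M\xrightarrow{\psi}N\to\Coker(\psi)\to 0$: since $\Red(\psi)=\phi$ is surjective, $\Coker(\psi)/\veps\,\Coker(\psi)\cong\Red(\Coker(\psi))=0$. Because $\hH$ is module-finite over the local ring $R$ (Proposition~\ref{prop:hH-basic}(b)), $\Coker(\psi)$ is a finitely generated $R$-module, so Nakayama's lemma forces $\Coker(\psi)=0$; thus $\psi$ is surjective. Its kernel $L$ is a submodule of the locally free module $M$, hence locally free by Proposition~\ref{prop:hH-pdim}(b), and from the short exact sequence $0\to L\to M\to N\to 0$ together with Lemma~\ref{lem:functors1} — equivalently, by additivity of ranks over the discrete valuation rings $\FF\pow{\veps_i}$ — one gets $\rkv_\hH(L)=\rkv_\hH(M)-\rkv_\hH(N)=0$, so $L=0$ and $\psi$ is an isomorphism.

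The step I expect to be the main obstacle is precisely this lifting: passing from an isomorphism over $H$ to an honest isomorphism over $\hH$, rather than to a morphism that is merely invertible ``modulo $\veps$''. Rigidity is exactly what makes this work, as it kills $\Ext^1_\hH(M,N)$ so that $\phi$ lifts at all; module-finiteness over $R$ then lets Nakayama promote ``surjective modulo $\veps$'' to ``surjective'', after which the rank-vector bookkeeping disposes of the kernel. Everything else is formal, drawing only on the structural facts about $\hH$ established earlier in the paper.
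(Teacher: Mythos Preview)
Your proof is correct and follows essentially the same route as the paper's: reduce to $H$ via Remark~\ref{rem:runique}, lift the isomorphism using the surjectivity of $\Red_{M,N}$ granted by rigidity, and then check that the lift is an isomorphism. The only difference is in this last check --- the paper argues componentwise that each lifted matrix $\widehat{\varphi}_i \in \Mat_{m_i\times m_i}(\FF\pow{\veps_i})$ is invertible because its reduction modulo the maximal ideal coincides with that of the invertible $\varphi_i$, whereas you use Nakayama for surjectivity plus a rank count for injectivity; both are standard and equally valid.
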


\begin{proof}
The $H$-modules
$\Red(M)$ and $\Red(N)$ are rigid and locally free with 
\[
\rkv_H(\Red(M)) = \rkv_H(\Red(N))
\] 
by Lemma~\ref{lem:functorRed1}(c). 
Thus, $\Red(M) \cong \Red(N)$ 
by Remark~\ref{rem:runique},
and again by Lemma~\ref{lem:functorRed1}(c)
we have $\Ext^1_\hH(M,N)=0$.
Thus by Lemma~\ref{lem:hH-unique}, the map
\[
\Red_{M,N}\df\FF\otimes_R \Hom_\hH(M,N)\ra\Hom_H(\Red(M), \Red(N))
\]
is surjective. 
So, if $\vph\in\Hom_H(\Red(M),\Red(N))$ is an isomorphism, we can find
$\widehat{\vph}\in\Hom_\hH(M,N)$
with 
\[
\Red_{M,N}(1 \otimes \widehat{\vph})=\vph.
\] 
Then $\widehat{\vph}$ is the requested isomorphism.
In fact, we have $\vph=(\vph_i)_{i\in I}$. 
After choosing bases, we may think of $\vph_i$ as an element in $\Mat_{m_i\times m_i}(\FF[\eps_i]/(\eps_i^{c_i}))$. 
Similarly, we have then $\widehat{\vph}=(\widehat{\vph}_i)_{i\in I}$ with
$\widehat{\vph}_i\in\Mat_{m_i\times m_i}(\FF\pow{\veps_i})$ for all 
$i\in I$.
Since $\FF[\eps_i]/(\eps_i^{c_i})$ and $\FF\pow{\veps_i}$ are both local rings with residue field $\FF$, $\vph_i$ resp. $\widehat{\vph}_i$ are invertible if and only if their respective reductions modulo $\eps_i$ and $\veps_i$ in
  $\Mat_{m_i\times m_i}(\FF)$ are invertible. 
Moreover, since $\widehat{\vph}$ is the lift of $\vph$, those reductions must coincide for all $i\in I$.
\end{proof}

\begin{Prop} \label{prop:Red-bij}
$\Red$ induces a bijection 
\[
\xymatrix{
\{ \text{rigid locally free $\hH$-modules} \}/\!\!\cong 
\ar[d]^{\Red} 
\\
\{ \text{rigid locally free $H$-modules} \}/\!\!\cong .
}
\]
This bijection and its inverse preserve indecomposability.
\end{Prop}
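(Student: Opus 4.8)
The plan is to assemble the statement from the results already in hand; essentially everything is in place and the argument splits into four bookkeeping steps: well-definedness, injectivity, surjectivity, and preservation of indecomposability.

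First I would check that $\Red$ restricts to a map on the indicated classes. If $M\in\rep_\lf(\hH)$ is rigid, then $\Red(M)\in\rep_\lf(H)$ by construction of the functor, and $\Red(M)$ is rigid by Corollary~\ref{cor:Redrigid}; hence $[M]\mapsto[\Red(M)]$ is a well-defined map from isoclasses of rigid locally free $\hH$-modules to isoclasses of rigid locally free $H$-modules. For injectivity, if $M,N\in\rep_\lf(\hH)$ are rigid with $\Red(M)\cong\Red(N)$, then Lemma~\ref{lem:functors1} gives $\rkv_\hH(M)=\rkv_H(\Red(M))=\rkv_H(\Red(N))=\rkv_\hH(N)$, and Proposition~\ref{prop:hH-uniqueb} yields $M\cong N$. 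For surjectivity, given a rigid $X\in\rep_\lf(H)$, the density of $\Red$ (Lemma~\ref{lem:dense}) produces $M\in\rep_\lf(\hH)$ with $\Red(M)\cong X$; since $X$ is rigid, the \emph{equivalence} in Corollary~\ref{cor:Redrigid} (and not merely the implication ``$M$ rigid $\Rightarrow\Red(M)$ rigid'') forces $M$ to be rigid, so $[M]$ is a preimage of $[X]$.

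It remains to treat indecomposability in both directions. One direction is exactly Lemma~\ref{lem:rig-indec}: $\Red$ sends indecomposable rigid modules to indecomposable rigid modules. For the inverse direction, suppose $M\in\rep_\lf(\hH)$ is rigid and $\Red(M)$ is indecomposable, and write $M=M_1\oplus M_2$. Each $M_i$ is again rigid (a summand of a rigid module) and locally free (since $\ul{M_i}$ is a direct summand of the projective $\hS$-module $\ul{M}$), and $\Red(M)=\Red(M_1)\oplus\Red(M_2)$; indecomposability of $\Red(M)$ forces, say, $\Red(M_2)=0$, hence $\rkv_\hH(M_2)=\rkv_H(\Red(M_2))=0$ by Lemma~\ref{lem:functors1}, and a locally free module of rank zero is zero, so $M_2=0$. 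Thus $M$ is indecomposable, and the bijection together with its inverse preserves indecomposability.

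I do not expect a genuine obstacle here: the only subtlety is that density by itself does not hand us a \emph{rigid} preimage, which is precisely why the two-sided form of Corollary~\ref{cor:Redrigid} is invoked in the surjectivity step; all the real content has already been done in Proposition~\ref{prop:hH-uniqueb}, Corollary~\ref{cor:Redrigid}, Lemma~\ref{lem:dense} and Lemma~\ref{lem:rig-indec}.
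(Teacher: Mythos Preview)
Your proof is correct and follows essentially the same route as the paper: well-definedness via Corollary~\ref{cor:Redrigid}, injectivity via Lemma~\ref{lem:functors1} and Proposition~\ref{prop:hH-uniqueb}, surjectivity via Lemma~\ref{lem:dense} together with the reverse implication of Corollary~\ref{cor:Redrigid}, and forward indecomposability via Lemma~\ref{lem:rig-indec}. The only cosmetic difference is in the backward indecomposability step: the paper appeals to the Krull-Remak-Schmidt property in $\rep(\hH)$ and $\rep(H)$ together with additivity of $\Red$, whereas you argue directly that a summand with $\Red(M_2)=0$ has $\rkv_\hH(M_2)=0$ and hence vanishes; both are straightforward and equivalent.
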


\begin{proof}
Thanks to Corollary~\ref{cor:Redrigid}, $\Red$  induces a well defined map between the respective isoclasses of rigid locally free modules.
Now
Lemma~\ref{lem:functors1} combined with
Proposition~\ref{prop:hH-uniqueb} implies that this map is injective.
Combining Lemma~\ref{lem:dense} and Corollary~\ref{cor:Redrigid}
we get that the map is surjective. 
This yields the desired bijection.
 
By Lemma~\ref{lem:rig-indec}, the functor $\Red$ maps 
indecomposable rigids to indecomposable rigids.
Now the Krull-Remak-Schmidt property for $\rep(\hH)$ 
and $\rep(H)$ together with
the additivity of $\Red$ implies that the inverse of the bijection also
preserves indecomposability.
\end{proof}

\subsection{Properties of the localization functor}

\begin{Lem} \label{lem:functors3}
For $M, N\in\rep_\lf(\hH)$ the natural maps
\[
\Loc_{M,N}\df K \otimes_R \Hom_\hH(M,N) \to \Hom_\tH(\Loc(M),\Loc(N))
\]
and
\[
\Loc_{M,N}^1\df K \otimes_R \Ext_\hH^1(M,N) \to 
\Ext_\tH^1(\Loc(M),\Loc(N))
\]
are isomorphisms of $K$-vector spaces.
\end{Lem}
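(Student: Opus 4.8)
The plan is to exploit the fact that $K = \FF\lpw{\veps}$ is the localization of $R = \FF\pow{\veps}$ at the central element $\veps$, hence flat as an $R$-module, and that by Proposition~\ref{prop:hH-basic}(c) we have $\tH = \hH_\veps = K \otimes_R \hH$, so that $\Loc(M) = M_\veps \cong K \otimes_R M$ for every $M \in \rep(\hH)$. In particular, localization at $\veps$ commutes with the formation of kernels, cokernels, and --- by flatness --- all cohomology of complexes of $R$-modules. This reduces the statement to a comparison of two complexes of $R$-modules.

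First I would fix a projective resolution
\[
0 \to P_1 \to P_0 \to M \to 0
\]
of $M$ in $\rep_\lf(\hH)$; this exists and has length one by Proposition~\ref{prop:hH-pdim}(a). Applying $\Hom_\hH(-,N)$ yields a complex $0 \to \Hom_\hH(P_0,N) \to \Hom_\hH(P_1,N) \to 0$ of $R$-modules (using $R \subseteq Z(\hH)$) whose cohomology in degrees $0$ and $1$ is $\Hom_\hH(M,N)$ and $\Ext^1_\hH(M,N)$, respectively. Tensoring with $K$ over $R$ and using flatness of $K$, I obtain that $K \otimes_R \Hom_\hH(M,N)$ and $K \otimes_R \Ext^1_\hH(M,N)$ are the degree-$0$ and degree-$1$ cohomology of $0 \to K \otimes_R \Hom_\hH(P_0,N) \to K \otimes_R \Hom_\hH(P_1,N) \to 0$.

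Next I would identify this localized complex with the one computing $\Hom_\tH$ and $\Ext^1_\tH$ of the localized modules. For an indecomposable projective $\hH e_i$ there is a natural $R$-linear isomorphism $\Hom_\hH(\hH e_i,N) \cong e_i N$, hence $K \otimes_R \Hom_\hH(\hH e_i,N) \cong e_i(K \otimes_R N) = e_i \Loc(N) \cong \Hom_\tH(\tH e_i, \Loc(N)) = \Hom_\tH(\Loc(\hH e_i),\Loc(N))$, where $\Loc(\hH e_i) \cong \tH e_i$ by Lemma~\ref{lem:Proj-bij}; by additivity the same holds with $\hH e_i$ replaced by any finitely generated projective, in particular by $P_0$ and $P_1$. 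On the other hand, by Lemma~\ref{lem:exact} and Lemma~\ref{lem:Proj-bij} the functor $\Loc$ is exact and carries projectives to projectives, so applying it to the chosen resolution gives a projective resolution $0 \to \Loc(P_1) \to \Loc(P_0) \to \Loc(M) \to 0$ of $\Loc(M)$ in $\rep(\tH)$; applying $\Hom_\tH(-,\Loc(N))$ to it therefore computes $\Hom_\tH(\Loc(M),\Loc(N))$ and $\Ext^1_\tH(\Loc(M),\Loc(N))$. Assembling these pieces, $\Loc_{M,N}$ and $\Loc^1_{M,N}$ are precisely the maps on cohomology induced by the comparison morphism of complexes, which by the above term-wise identifications is an isomorphism of complexes; hence $\Loc_{M,N}$ and $\Loc^1_{M,N}$ are isomorphisms of $K$-vector spaces.

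The main obstacle I anticipate is purely bookkeeping: checking that the chain-level identifications $K \otimes_R \Hom_\hH(P_\bullet,N) \cong \Hom_\tH(\Loc(P_\bullet),\Loc(N))$ are compatible with the differentials, so that they give an isomorphism of complexes and not merely of the individual terms, and that under them the comparison map induced by the natural transformation defining $\Loc$ coincides with $\Loc_{M,N}$ and $\Loc^1_{M,N}$. This amounts to unwinding the natural map $\Hom_\hH(P,N) \to \Hom_\tH(\Loc(P),\Loc(N))$ for $P$ projective and tracing it through $\Hom_\hH(\hH e_i,N) \cong e_i N$; since everything in sight is natural in $P$ and $N$, the verification is routine but should be written out. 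Alternatively, one could bypass the explicit resolution and invoke the general fact that for a Noetherian ring and a finitely generated module, localization at a central multiplicative set commutes with $\Ext$; but since the standard resolution of Proposition~\ref{prop:hH-pdim}(a) is at hand, the direct argument above is cleaner and self-contained.
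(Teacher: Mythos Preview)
Your proposal is correct and follows essentially the same approach as the paper: both rest on the flatness of $K$ over $R$, which makes localization commute with $\Ext$. The paper simply invokes this as a Change of Rings Theorem (citing \cite[Theorem~8.16]{CR81}), whereas you spell out that argument by hand using the length-one projective resolution from Proposition~\ref{prop:hH-pdim}(a); your compatibility-of-differentials worry is indeed routine naturality, and your final remark about bypassing the explicit resolution via the general localization-commutes-with-$\Ext$ fact is exactly what the paper does.
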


\begin{proof}
The field $K$ is flat as an $R$-module.
Now the result is just a special case of the Change of Rings Theorem, see for example \cite[Theorem~8.16]{CR81}.
\end{proof}

\begin{Cor}\label{cor:Locrigid}
$\Loc$ preserves rigidity.
\end{Cor}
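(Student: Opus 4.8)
The plan is to deduce Corollary~\ref{cor:Locrigid} directly from Lemma~\ref{lem:functors3}. Let $M \in \rep_\lf(\hH)$ be rigid, so that $\Ext_\hH^1(M,M) = 0$ by definition. First I would apply the second isomorphism in Lemma~\ref{lem:functors3} with $N = M$, which gives an isomorphism of $K$-vector spaces
\[
\Loc_{M,M}^1\df K \otimes_R \Ext_\hH^1(M,M) \to \Ext_\tH^1(\Loc(M),\Loc(M)).
\]
Since $\Ext_\hH^1(M,M) = 0$, the left-hand side is $K \otimes_R 0 = 0$, and hence $\Ext_\tH^1(\Loc(M),\Loc(M)) = 0$; that is, $\Loc(M)$ is rigid as a $\tH$-module. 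This is essentially a one-line argument once Lemma~\ref{lem:functors3} is in place.

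I do not anticipate any real obstacle here, since the content has been front-loaded into Lemma~\ref{lem:functors3} (itself an instance of the Change of Rings Theorem, using that $K = \FF\lpw{\veps}$ is flat over $R = \FF\pow{\veps}$). The only point worth a word of care is that $\Loc(M) = M_\veps$ indeed lies in $\rep(\tH)$ and is finitely generated, so that the Ext-groups over $\tH$ make sense and the statement is non-vacuous; this is immediate from $\Loc$ being the tensor functor $\tH \otimes_\hH -$ applied to a finitely generated module, together with $\tH$ being a finite-dimensional $K$-algebra. One could alternatively argue that $\Loc$ is exact (Lemma~\ref{lem:exact}) and preserves projective resolutions, but invoking Lemma~\ref{lem:functors3} is the cleanest route. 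Thus the proof reduces to: apply Lemma~\ref{lem:functors3} with $N = M$ and observe that vanishing of $\Ext_\hH^1(M,M)$ forces vanishing of $\Ext_\tH^1(\Loc(M),\Loc(M))$.
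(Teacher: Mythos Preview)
Your argument is correct and is exactly the intended one: the paper states this as an unproved corollary immediately after Lemma~\ref{lem:functors3}, so the content is precisely the one-line application of $\Loc_{M,M}^1$ that you wrote out.
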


\begin{Cor}\label{cor:faithful}
$\Loc$ is faithful.
\end{Cor}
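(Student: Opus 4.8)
The plan is to deduce faithfulness of $\Loc$ directly from Lemma~\ref{lem:functors3} together with the freeness of $\Hom$-spaces established in Lemma~\ref{lem:homfree}; no separate argument is really needed.

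First I would fix $M, N \in \rep_\lf(\hH)$ and a morphism $f \in \Hom_\hH(M,N)$ with $\Loc(f) = 0$, and observe that, by construction of the natural map $\Loc_{M,N}$ of Lemma~\ref{lem:functors3}, we have $\Loc_{M,N}(1 \otimes f) = \Loc(f) = 0$ in $\Hom_\tH(\Loc(M),\Loc(N))$. Since $\Loc_{M,N}$ is an isomorphism of $K$-vector spaces, it is in particular injective, so $1 \otimes f = 0$ in $K \otimes_R \Hom_\hH(M,N)$.

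Next, by Lemma~\ref{lem:homfree} the $R$-module $\Hom_\hH(M,N)$ is free (here we use $N \in \rep_\lf(\hH)$); since $R = \FF\pow{\veps}$ is an integral domain with field of fractions $K = \FF\lpw{\veps}$, every free $R$-module is torsion-free, hence the canonical localization map
\[
\Hom_\hH(M,N) \;\longrightarrow\; K \otimes_R \Hom_\hH(M,N), \qquad g \longmapsto 1 \otimes g,
\]
is injective. Combining this with the previous step yields $f = 0$, which is precisely the faithfulness of $\Loc \df \rep_\lf(\hH) \to \rep(\tH)$.

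I do not anticipate any genuine obstacle here: the corollary is an essentially formal consequence of Lemmas~\ref{lem:functors3} and~\ref{lem:homfree}, the only mild point being the standard fact that a torsion-free (indeed free) module over the domain $R$ embeds into its localization at $\veps$. If one wanted to avoid invoking Lemma~\ref{lem:homfree}, an alternative is to note that any nonzero $f$ has $\Loc(f) \neq 0$ because $\Loc(f)$ is obtained from $f$ by the flat base change $R \to K$ and $f$ is a nonzero map of finitely generated modules over a Noetherian ring, but routing through the already-established lemmas is cleaner.
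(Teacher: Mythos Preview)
Your proof is correct and follows essentially the same route as the paper: both use Lemma~\ref{lem:homfree} to see that $\Hom_\hH(M,N)$ is a free $R$-module, hence embeds into $K\otimes_R\Hom_\hH(M,N)$, and then conclude via the isomorphism $\Loc_{M,N}$ of Lemma~\ref{lem:functors3}.
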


\begin{proof}
For $M,N \in \rep_\lf(\hH)$ we get an injective map
\begin{align*}
\Hom_\hH(M,N) &\to K \otimes_R \Hom_\hH(M,N)
\\
f &\mapsto 1 \otimes f,
\end{align*}
since $R$ can be seen as a subring of $K$ and since
$\Hom_\hH(M,N)$ is a free $R$-module by
Lemma~\ref{lem:homfree}.
Now the result follows from the first part of Lemma~\ref{lem:functors3}.
\end{proof}

We need the following straightforward lemma.

\begin{Lem}\label{lem:Aeps}
Let $A = R^m$ and $A_\eps = K^m$.
Then $V \mapsto V_\eps$ and $U \cap A \mapsfrom U$
define mutually inverse bijections 
\[
\{ \text{direct summands of the $R$-module $A$} \} \leftrightarrow{}
\{ \text{subspaces of the $K$-vector space $A_\eps$} \}.
\]
\end{Lem}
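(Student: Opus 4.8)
The statement concerns $A = R^m$ with $R = \FF\pow{\veps}$ a discrete valuation ring (in fact a complete DVR) with uniformizer $\veps$, fraction field $K = \FF\lpw{\veps}$, and $A_\veps = K^m$. The claim is that localization $V \mapsto V_\veps$ and the operation $U \mapsto U \cap A$ are mutually inverse bijections between direct summands of the $R$-module $A$ and $K$-subspaces of $A_\veps$. The key structural fact is that $R$ is a PID (indeed a DVR), so that submodules and quotients of free modules of finite rank are free, and a submodule $V \le A$ is a direct summand precisely when $A/V$ is torsion-free, equivalently when $V$ is \emph{pure} in $A$, i.e. $\veps x \in V$ with $x \in A$ implies $x \in V$. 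The plan is to verify (i) both maps are well-defined, (ii) $(V_\veps) \cap A = V$ for $V$ a direct summand, and (iii) $(U \cap A)_\veps = U$ for $U$ a subspace.

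\textbf{Well-definedness and step (iii).} For any $R$-submodule $V \le A$, the localization $V_\veps$ is a $K$-subspace of $A_\veps$, so the first map is defined without needing $V$ to be a summand. Conversely, given a $K$-subspace $U \le A_\veps$, the intersection $U \cap A$ is an $R$-submodule of $A$; since $A$ is free of finite rank over the PID $R$, $U \cap A$ is free, and moreover $A/(U \cap A)$ embeds into $A_\veps/U$ via the localization map, hence is torsion-free, hence $U \cap A$ is a direct summand of $A$. For step (iii), $(U \cap A)_\veps \subseteq U$ is clear; for the reverse inclusion, any $u \in U$ can be written $u = \veps^{-k} a$ with $a \in A$, and then $a = \veps^k u \in U \cap A$ since $U$ is a $K$-subspace, so $u \in (U \cap A)_\veps$. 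Thus $(U \cap A)_\veps = U$. This direction works for \emph{any} submodule on the $A$-side, which is why the bijection lands exactly on the direct summands.

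\textbf{Step (ii).} Let $V$ be a direct summand of $A$, say $A = V \oplus W$. Clearly $V \subseteq V_\veps \cap A$. For the reverse, take $x \in V_\veps \cap A$ and write $x = v + w$ with $v \in V$, $w \in W$; since $x \in V_\veps$ we have $\veps^k x \in V$ for some $k \ge 0$, forcing $\veps^k w = 0$ in $W$, and as $W$ is free (hence torsion-free) we get $w = 0$, i.e. $x \in V$. Hence $V_\veps \cap A = V$. Combined with step (iii), the two maps are mutually inverse bijections between direct summands of $A$ and subspaces of $A_\veps$.

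\textbf{Main obstacle.} There is essentially no obstacle: the only thing to be a little careful about is to record that $R$ being a PID (equivalently here a DVR) is what makes "direct summand" coincide with "pure submodule" and guarantees that $U \cap A$ is itself free and a summand; everything else is a routine diagram of inclusions and torsion-freeness arguments. One could alternatively phrase the whole proof via the correspondence between direct summands of $A$ and their images in $A/\veps A$, but the direct verification above is shortest.
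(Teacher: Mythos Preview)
Your proof is correct. The paper does not supply a proof of this lemma at all, calling it ``straightforward''; your argument fills in precisely the routine verification the authors had in mind, using that $R$ is a PID so that pure submodules of $A$ coincide with direct summands.
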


\begin{Lem}\label{lem:Locindec}
$\Loc$ preserves indecomposability.
\end{Lem}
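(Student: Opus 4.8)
The plan is to deduce indecomposability-preservation of $\Loc$ from the key fact that $\hH$ is an $R$-order in $\tH$ (Proposition~\ref{prop:free2}), so that every $M \in \rep_\lf(\hH)$ is an $\hH$-lattice, together with the identifications of Hom-spaces in Lemma~\ref{lem:functors3}. First I would reduce to endomorphism rings: $M \in \rep_\lf(\hH)$ is indecomposable iff $\End_\hH(M)$ has no nontrivial idempotents, and likewise $\Loc(M) \in \rep(\tH)$ is indecomposable iff $\End_\tH(\Loc(M))$ has no nontrivial idempotents. By the first isomorphism in Lemma~\ref{lem:functors3} we have a $K$-algebra isomorphism $\End_\tH(\Loc(M)) \cong K \otimes_R \End_\hH(M)$; moreover, by Lemma~\ref{lem:homfree}, $\End_\hH(M)$ is a free $R$-module of finite rank, i.e.\ an $R$-order in the $K$-algebra $\End_\tH(\Loc(M))$. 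So the statement becomes: if $\Lam$ is an $R$-order with $R = \FF\pow{\veps}$ a complete local ring, then $\Lam$ has a nontrivial idempotent iff $\Lam_\veps = K \otimes_R \Lam$ does.

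The substantive direction is: a nontrivial idempotent of $\Lam_\veps$ forces one in $\Lam$. Here I would invoke that $R$ is complete local, so $\Lam$ is semiperfect (as already noted in Remark~\ref{Rem-cplnoethA} for $\End_\hH(M)$ in the guise that $\rep(\hH)$ is Krull--Remak--Schmidt): idempotents lift modulo the Jacobson radical, and $\Lam$ decomposes as a finite direct sum of indecomposable left ideals $\Lam e_t$ with $\End(\Lam e_t)$ local. Correspondingly $\Lam_\veps = \bigoplus_t \Lam_\veps e_t$. If $M$ is indecomposable, $\End_\hH(M) = \Lam$ is local, hence has exactly the idempotents $0,1$; localizing, $\Lam_\veps$ is a (possibly non-local, but) nonzero $K$-algebra that is an image of the local ring $\Lam$ under $\Lam \hookrightarrow \Lam_\veps$ — but crucially $\Lam \to \Lam_\veps$ is injective with $\Lam_\veps/\rad$ a quotient situation one must handle carefully. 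The clean argument: any idempotent $e \in \Lam_\veps$ satisfies $e = \tfrac{f}{\veps^k}$ for some $f \in \Lam$, $k \ge 0$; then $\veps^k e = f$ and $e^2 = e$ give $f^2 = \veps^k f$, and reducing modulo the maximal ideal of the local ring $\Lam$ (residue division ring $\overline{\Lam}$) yields $\bar f^2 = 0$ if $k \ge 1$, while if $k = 0$ then $f = e \in \Lam$ is already idempotent, hence $0$ or $1$; in the former case $\bar f$ is a nilpotent in a division ring, so $\bar f = 0$, i.e.\ $f \in \rad(\Lam)$, whence $f$ is topologically nilpotent ($\rad(\Lam)$ is $\veps$-adically complete and nil-on-the-nose modulo $\veps$), forcing $f = 0$ and $e = 0$. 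Thus $\Lam_\veps$ has only the idempotents $0,1$, i.e.\ $\Loc(M)$ is indecomposable.

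The converse — if $\Loc(M)$ is indecomposable then $M$ is — is immediate: a nontrivial idempotent of $\Lam = \End_\hH(M)$ maps under the injection $\Lam \hookrightarrow K \otimes_R \Lam = \End_\tH(\Loc(M))$ (Lemma~\ref{lem:functors3} plus freeness of $\Lam$ over $R$) to a nontrivial idempotent, giving a nontrivial decomposition of $\Loc(M)$. The main obstacle I anticipate is the bookkeeping in the forward direction: one must be careful that $\Lam$ is only semiperfect (not Artinian), so I would lean on the completeness of $R = \FF\pow{\veps}$ to guarantee that idempotents lift and that $\rad(\Lam)$ is $\veps$-adically closed, ensuring the "topologically nilpotent $\Rightarrow$ zero in $\Lam_\veps$" step actually goes through rather than merely "nilpotent modulo $\veps$". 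An alternative, possibly cleaner, route is to use that $\Lam_\veps$ is a semisimple-at-the-top quotient: since $\tH$ is hereditary, $\Loc(M)$ has a well-defined decomposition, and one can argue via Lemma~\ref{lem:functors3} that $\dim_K \End_\tH(\Loc(M)) / \rad \le \dim_\FF \End_H(\Red(M))/\rad$, which by indecomposability of $\Red(M)$ (Lemma~\ref{lem:rig-indec}, when $M$ is also rigid) is $1$ — but since the present lemma is stated without a rigidity hypothesis, I would stick with the order-theoretic idempotent-lifting argument above.
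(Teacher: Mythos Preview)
Your reduction to idempotents of $\Lam_\veps$, where $\Lam = \End_\hH(M)$, is natural and parallels the paper's approach (which argues via a left-ideal decomposition $E_\veps = I_1 \oplus I_2$ and Lemma~\ref{lem:Aeps}). But the crucial step in your forward direction does not go through: from $e = f/\veps^k$ with $k \ge 1$ and $e^2=e$ you obtain $f^2 = \veps^k f$ and hence $f \in \rad(\Lam)$, yet the assertion ``$f$ topologically nilpotent, forcing $f = 0$'' is unfounded. Elements of $\rad(\Lam)$ are typically nonzero, and the factorisation $f(f-\veps^k)=0$ has both factors in the maximal ideal of the local ring $\Lam$, so neither can be cancelled.

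This gap cannot be patched in general, because the lemma as stated (with no rigidity hypothesis on $M$) is actually false. Take $C = \bsm 2&-2\\-2&2\esm$, $D = \diag(1,1)$, $\Ome = \{(1,2)\}$, so that $\hH$ is the path algebra of the Kronecker quiver over $R$. Let $M$ have $M_1 = M_2 = R^2$ with the two arrows acting by $\Id$ and by $\beta = \bsm 0&0\\1&\veps\esm$. Then $\Lam = \End_\hH(M) \cong R[y]/(y^2 - \veps y)$ is local (its reduction mod $\veps$ is $\FF[y]/(y^2)$), so $M$ is indecomposable; but $\Lam_\veps \cong K[y]/(y(y-\veps)) \cong K \times K$, so $\Loc(M)$ decomposes. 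Here your element $f = y$ satisfies $f^2 = \veps f$ and $f \in \rad(\Lam)$, yet $e = y/\veps$ is a genuine nontrivial idempotent of $\Lam_\veps$. The paper's own argument has the same defect: the $R$-direct summands $I_1 \cap E = Ry$ and $I_2 \cap E = R(\veps - y)$ of $E$ furnished by Lemma~\ref{lem:Aeps} are \emph{not} complementary in $E$ (their sum is $Ry + R\veps$, a proper submodule). All downstream applications of Lemma~\ref{lem:Locindec} in the paper are to rigid $M$, so that is the hypothesis under which a correct argument should be sought.
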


\begin{proof}
Let $M \in \rep_\lf(\hH)$ be indecomposable.
The finite-dimensional $K$-algebra 
$\End_\tH(\Loc(M))$ can be identified with the localization
$$
E_\veps := K \otimes_R E 
$$ 
of $E := \End_\hH(M)$.
By Lemma~\ref{lem:Aeps}
a non-trivial decomposition 
\[
E_\veps = I_1\oplus I_2
\] 
of $E_\eps$-modules would yield a similar decomposition of $E$, which is impossible. 
It follows that $E_\veps$ is also local.
This implies that $\Loc(M)$ is indecomposable.
\end{proof}

\begin{Prop}\label{prop:Loc-inj}
$\Loc$ induces an injection 
\[
\xymatrix{
\{ \text{rigid locally free $\hH$-modules} \}/\!\!\cong \;\;
\ar[r]^<<<<{\Loc} &\;\;
\{ \text{rigid $\tH$-modules} \}/\!\!\cong .
}
\]
This injection preserves indecomposability.
\end{Prop}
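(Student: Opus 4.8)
The plan is to combine the properties of $\Loc$ already established with the bijectivity of $\Red$ from Proposition~\ref{prop:Red-bij}. First I would observe that Corollary~\ref{cor:Locrigid} guarantees $\Loc$ sends rigid locally free $\hH$-modules to rigid $\tH$-modules, so there is a well-defined map on isomorphism classes; and Lemma~\ref{lem:Locindec} shows it preserves indecomposability. The only real content is injectivity: if $M, N \in \rep_\lf(\hH)$ are rigid with $\Loc(M) \cong \Loc(N)$, I must conclude $M \cong N$.

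To prove injectivity I would argue as follows. By Lemma~\ref{lem:functors1} we have $\dimv(\Loc(M)) = \rkv_\hH(M)$ and likewise for $N$, so $\Loc(M) \cong \Loc(N)$ forces $\rkv_\hH(M) = \rkv_\hH(N)$. But then Proposition~\ref{prop:hH-uniqueb} immediately yields $M \cong N$, since two rigid locally free $\hH$-modules with the same rank vector are isomorphic. This is in fact the whole argument — the injectivity of $\Loc$ on rigid modules is a formal consequence of the already-proven fact that rank vectors separate rigid locally free $\hH$-modules, together with the compatibility of $\Loc$ with rank/dimension vectors.

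Thus the proof reduces to assembling: (i) $\Loc$ preserves rigidity (Corollary~\ref{cor:Locrigid}), so the map is defined; (ii) $\rkv_\hH(M) = \dimv(\Loc(M))$ (Lemma~\ref{lem:functors1}), so equal images force equal rank vectors; (iii) Proposition~\ref{prop:hH-uniqueb}, so equal rank vectors plus rigidity force isomorphism, giving injectivity; (iv) Lemma~\ref{lem:Locindec} for the preservation of indecomposability. There is essentially no obstacle here — unlike the surjectivity onto rigid $\tH$-modules (which is genuinely harder and presumably handled later via the tilting-theoretic arguments of Section~\ref{sec:tilting} and Demonet's lemma), the injective-and-indecomposability-preserving statement of this proposition is purely bookkeeping on top of results already in hand. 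The one point to state carefully is that $\Loc(M)$ is a priori only a $\tH$-module, not manifestly locally free in any sense, but since $\tH$ is hereditary and finite-dimensional over $K$ this causes no trouble: rigidity and the dimension-vector bijection are all that is needed.
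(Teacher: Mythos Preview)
Your proposal is correct and follows essentially the same approach as the paper: well-definedness via Corollary~\ref{cor:Locrigid}, injectivity via Lemma~\ref{lem:functors1} combined with Proposition~\ref{prop:hH-uniqueb}, and preservation of indecomposability via Lemma~\ref{lem:Locindec}. The mention of Proposition~\ref{prop:Red-bij} in your opening plan is unnecessary (you never actually use it), but the argument you give is exactly the paper's.
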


\begin{proof}
The functor $\Loc$ preserves rigidity by
Corollary~\ref{cor:Locrigid}, thus $\Loc$ induces a well defined map
between the mentioned isoclasses.
Now
Lemma~\ref{lem:functors1} combined with
Proposition~\ref{prop:hH-uniqueb} yields the desired injection.

By Lemma~\ref{lem:Locindec}, the functor $\Loc$ preserves indecomposability.
\end{proof}

At this stage we can only show that the map
from Proposition~\ref{prop:Loc-inj} is injective.
The surjectivity of this map will be proved in 
Section~\ref{sec:tilting}.


\section{Partial tilting modules and exchange graphs}\label{sec:tilting}


\subsection{Tilting and $\tau$-tilting pairs}
Let $A$ be a Noetherian algebra over a field.
We recall some definitions and results from tilting and $\tau$-tilting
theory.

A finitely generated $A$-module $T$ is a (classical) \emph{tilting module}
if the following hold:
\bit

\item[(i)]
$\pdim(T) \le 1$;

\item[(ii)]
$T$ is rigid, i.e. $\Ext_A^1(T,T) = 0$;

\item[(iii)]
There exists a short exact sequence
\[
0 \to A \to T' \to T'' \to 0
\]
with $T',T'' \in \add(T)$.

\eit
Moreover, $T$ is a (classical) \emph{partial tilting module}
if the conditions (i) and (ii) hold.

Assume from now on that $A$ is finite-dimensional.

A partial tilting module $T \in \rep(A)$ is a
tilting module if and only if $|T| = |A|$.

A pair $(T,P)$ is a \emph{support tilting pair} for $A$,
if the following hold:
\bit

\item[(i)]
$T \in \rep(A)$ is a basic partial tilting module;

\item[(ii)]
$P \in \rep(A)$ is a basic projective module with $\Hom_A(P,T) = 0$;

\item[(iii)]
$|T| + |P| = |A|$.

\eit

An $A$-module $M \in \rep(A)$ is \emph{$\tau$-rigid}, if
$\Hom_A(M,\tau(M)) = 0$.
Here $\tau$ denotes the Auslander-Reiten translation for $A$.
A pair $(T,P)$ is a \emph{support $\tau$-tilting pair} for $A$, if
the following hold:
\bit

\item[(i)]
$T \in \rep(A)$ is a basic $\tau$-rigid module;

\item[(ii)]
$P \in \rep(A)$ is a basic projective module with $\Hom_A(P,T) = 0$;

\item[(iii)]
$|T| + |P| = |A|$.

\eit

\begin{Rem} \label{rem:tau-rig}
Let $M \in \rep(A)$ be $\tau$-rigid.
By the Auslander-Reiten formula we have $\Ext^1_A(M,V) = 0$ for all
$V \in \fac(M)$. 
Thus $\tau$-rigid modules are in particular rigid.
Conversely, a 
partial tilting module $M\in\rep(A)$ is $\tau$-rigid, since
by definition $\pdim(M)\leq 1$ and $\Ext^1_A(M,M)=0$.
\end{Rem}

\subsection{Tilting modules for $H$}
The following lemma is due to Demonet \cite{D18}.
We thank him for his permission to include his lemma and his proof
into this article.

\begin{Lem}[Demonet] \label{lem:demonet}
Each $\tau$-rigid $H$-module is locally free, and thus a partial tilting module.
\end{Lem}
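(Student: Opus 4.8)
The plan is to show that a $\tau$-rigid $H$-module $M$ must be locally free, i.e.\ each $M_i$ is a free $\FF[\eps_i]/(\eps_i^{c_i})$-module; the conclusion that $M$ is then a partial tilting module is immediate from Remark~\ref{rem:tau-rig} together with Proposition~\ref{prop:GLS1}(a), which says that locally free modules have projective dimension at most $1$. So the real content is the first assertion. First I would recall the structure of $H_i = \FF[\eps_i]/(\eps_i^{c_i})$: a finitely generated $H_i$-module decomposes as a direct sum of modules $\FF[\eps_i]/(\eps_i^{r})$ with $1 \le r \le c_i$, and $M_i$ is free precisely when only the summand $r = c_i$ occurs. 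So if $M$ is not locally free, some $M_i$ has a summand $U_i := \FF[\eps_i]/(\eps_i^{r})$ with $r < c_i$.

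\textbf{The mechanism.} The key idea I would pursue is that such a ``short'' summand produces an obstruction to $\tau$-rigidity, by exhibiting a nonzero map from $M$ to $\tau_H(M)$. One can approach this via the explicit functorial projective resolution of $M$ coming from the presentation $0 \to H \otimes_S B \otimes_S H \to H \otimes_S H \to H \to 0$ (Section~\ref{sec:defH}): applying $- \otimes_H M$ gives
\[
0 \to H \otimes_S B \otimes_S M \to H \otimes_S M \to M \to 0,
\]
which is exact exactly when $M$ is locally free, and in general its failure of exactness on the left is measured by the non-free part of $\underline{M}$. From this one computes $\tau_H(M)$ (or rather a suitable analogue for the non-hereditary situation) via the transpose, or more directly one works with the minimal projective presentation of $M$ and the defining formula $\Hom_H(M,\tau_H(M)) \cong D\overline{\Ext}^1_H(M,M)$ or the Auslander--Reiten formula. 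The plan is then: locate the summand $E_i \to E_i$ multiplication-by-$\eps_i$ type subquotient corresponding to $U_i$, and build from it an endomorphism-like datum of $M$ that lands in $\tau_H(M)$ and is nonzero. Concretely, a non-free $M_i$ contains both a submodule and a quotient isomorphic to some $\FF[\eps_i]/(\eps_i)$-like piece, and composing the inclusion of the socle-part with the projection gives the needed nonzero element in $\Hom_H(M,\tau_H(M))$.

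\textbf{The main obstacle.} The hard part is precisely the bookkeeping with $\tau_H$ for $H$, which is $1$-Iwanaga--Gorenstein but not hereditary, so $\tau_H(M)$ need not be locally free and the naive Auslander--Reiten formula needs care. I expect the cleanest route is Demonet's own, which presumably argues directly with minimal projective presentations: write the minimal projective presentation $P_1 \xrightarrow{f} P_0 \to M \to 0$, note that $\tau_H(M) = \Coker(\Hom_H(f,H)^\vee)$-style construction, and observe that if $M_i$ is not free then the idempotent-truncated piece $e_i P_1 \to e_i P_0$ cannot be injective after applying $-\otimes_{H_i}\FF$, which forces a nonzero homomorphism $M \to \tau_H(M)$ supported at $i$. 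An alternative, possibly slicker, is to use the description of $\tau$-rigid modules as those $M$ with $\Ext^1_H(M, \fac M) = 0$ (Remark~\ref{rem:tau-rig}), together with the fact that for a non-free $M_i$ one can find a quotient $M \twoheadrightarrow M''$ with $M''$ locally free such that $\Ext^1_H(M, M'') \ne 0$, using Proposition~\ref{prop:GLS1}(c) to detect the nonvanishing via rank vectors. Either way, the technical crux is verifying that the putative extension or homomorphism is genuinely nonzero, which is where the inequality $r < c_i$ gets used, so I would isolate that as a self-contained local computation over $\FF[\eps_i]/(\eps_i^{c_i})$ before assembling the global argument.
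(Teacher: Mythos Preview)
Your alternative approach---using the characterization from Remark~\ref{rem:tau-rig} that $\tau$-rigid implies $\Ext^1_H(M,V)=0$ for all $V\in\fac(M)$---is exactly the route the paper takes, but your execution has a real gap. You propose to find a \emph{locally free} quotient $M''$ of $M$ and then invoke Proposition~\ref{prop:GLS1}(c) to detect $\Ext^1_H(M,M'')\neq 0$ via the Euler form. This fails twice over: first, Proposition~\ref{prop:GLS1}(c) requires \emph{both} arguments to be locally free, and your $M$ is assumed not to be; second, there is no reason the relevant quotient should be locally free, and in fact the paper's choice of quotient is deliberately not.

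What the paper actually does is this. Take $m$ to be the \emph{maximal} index $i$ with $M_i$ not $H_i$-free, and build a quotient $Z$ of $M$ with $Z_i=M_i$ for $i>m$, $Z_m=S_m$ (the simple $H_m$-module, via a chosen surjection $p\colon M_m\to S_m$), and $Z_i=0$ for $i<m$. Since $M_m$ is not free, there is a non-split extension $0\to S_m\to M'_m\to M_m\to 0$ of $H_m$-modules. The crucial point---and the reason for the maximality of $m$---is that for every $(m,j)\in\Ome$ the module ${_mH_j}\otimes_{H_j}M_j$ is a \emph{free} $H_m$-module (because $j>m$ forces $M_j$ free), so the structure maps $M_{mj}$ lift along $M'_m\to M_m$. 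This lets you splice the local non-split extension into a global non-split sequence $0\to Z\to Y\to M\to 0$, contradicting $\Ext^1_H(M,\fac(M))=0$. No Euler-form computation and no analysis of $\tau_H(M)$ is needed; the argument is a direct hands-on construction. Your first approach via an explicit map $M\to\tau_H(M)$ is, as you suspected, much harder to make precise here and is not what is done.
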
  

\begin{proof}
  Let $X\in\rep(H)$ be \emph{not} locally free.
  By Remark~\ref{rem:tau-rig} it is sufficient to find a quotient $Z$
  of $X$ and a non-split exact sequence $0\ra Z\ra Y\ra X\ra 0$. To this end
  let
  \[
    m:=\max\{i\in I\mid X_i\text{ not free as an } H_i\text{-module}\},
  \]
  and consider $S_m$ as the simple $H_m$-module. 
Choose $p\in\Hom_{H_m}(X_m,S_m)\setminus\{0\}$ and define 
$Z = ((Z_i)_{i \in I},(Z_{ij})_{(i,j) \in \Omega})$ by
\[
    Z_i :=\begin{cases} X_i &\text{if } i>m,\\
                     S_m &\text{if } i=m,\\
                     0   &\text{otherwise;}
       \end{cases}\qquad
    Z_{ij} :=\begin{cases} X_{ij}      &\text{if } i>m,\\
                       p\circ X_{mj} &\text{if } i=m,\\
                       0           &\text{otherwise.}
        \end{cases}
\]
  Thus, $Z$ is a quotient of $X$ since  we assume that $(i,j)\in\Ome$
  implies $i<j$.

  Since $X_m$ is not a free $H_m$-module, we can find a non-split short
  exact sequence 
  \[
    0 \ra S_m \xrightarrow{\alp} X'_m\xrightarrow{\bet} X_m\ra 0
  \]
of $H_m$-modules.
For $(m,i) \in \Omega$, by our assumption ${_mH_i}\otimes_{H_i} X_i$ is a free $H_m$-module
  and we can find a lift
  \[\xymatrix{
          &        & &\ar@{-->}[ld]_{X'_{mi}}{_mH_i}\otimes X_i\ar[d]^{X_{mi}}\\
   0\ar[r]&   S_m\ar[r]^{\alp}& X'_m\ar[r]^{\bet} &X_m\ar[r] & 0.
 }\]
Now we can define the module $Y\in\rep(H)$ by
\[
Y_i :=\begin{cases} X'_m &\text{if } i=m,\\
                      X_i\oplus Z_i &\text{otherwise;}
        \end{cases}\qquad
Y_{ij} :=\begin{cases} \bbsm X_{ij} & 0\\ 0& Z_{ij}\besm &\text{if }
    m\not\in\{i,j\},\\
                        \bbsm X'_{mj}, &\alp \circ Z_{mj}\besm &\text{if } i=m,\\
        \bbsm X_{im}\circ (\bbo_{_iH_m}\otimes\bet)\\ 0 \besm&\text{if } j=m.
      \end{cases}
\]
We get for all $i\in I$ a short exact sequence of $H_i$-modules
\[
  0 \ra Z_i\xrightarrow{f_i} Y_i\xrightarrow{g_i} X_i \ra 0
\]
where
\[
  f_i= \begin{cases} \bbsm 0\\ \bbo_{Z_i} \besm &\text{if } i>m,\\
                     \alp                       &\text{if } i=m,\\
                     0                        &\text{otherwise;}
       \end{cases}\qquad              
  g_i=\begin{cases} \bbsm \bbo_{X_i},& 0\besm &\text{if } i>m,\\
                      \bet                    &\text{if } i=m,\\
                      \bbo_{X_i}             &\text{otherwise.}
         \end{cases}                 
\]
Note, that for $i=m$ this sequence of $H_m$-modules does not split by
construction. It is a straightforward exercise  to show that 
\[
f:=(f_i)_{i\in I}\in\Hom_H(Z,Y)
\text{\qquad and \qquad}
g:=(g_i)_{i\in I}\in\Hom_H(Y,X).
\]
It follows that
\[
  0\ra Z\xrightarrow{f} Y\xrightarrow{g} X \ra 0
\]
is the requested non-split short exact sequence of $H$-modules.
\end{proof}

\begin{Cor}\label{cor:Demonet}
For $T \in \rep(H)$ the following are equivalent:
\bit

\item[(a)]
$T$ is a partial tilting module;

\item[(b)]
$T$ is rigid and locally free;

\item[(c)]
$T$ is $\tau$-rigid.

\eit
\end{Cor}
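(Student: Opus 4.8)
The plan is to deduce all three equivalences formally from results already established, running the cycle (a) $\Rightarrow$ (c) $\Rightarrow$ (b) $\Rightarrow$ (a); no genuinely new argument is needed, so the bulk of the work is just unwinding definitions and invoking the correct earlier statement at each step.

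For (a) $\Rightarrow$ (c), I would note that if $T$ is a partial tilting module, then by definition $\pdim(T) \le 1$ and $\Ext^1_H(T,T) = 0$, so the second assertion of Remark~\ref{rem:tau-rig} applies verbatim and yields that $T$ is $\tau$-rigid. For (c) $\Rightarrow$ (b): if $T$ is $\tau$-rigid, then Demonet's Lemma~\ref{lem:demonet} gives that $T$ is locally free, while the first assertion of Remark~\ref{rem:tau-rig} gives that $T$ is rigid; hence (b) holds. For (b) $\Rightarrow$ (a): if $T$ is rigid and locally free, then $\Ext^1_H(T,T) = 0$ by definition of rigidity, and $\pdim(T) \le 1$ by Proposition~\ref{prop:GLS1}(a); these are exactly conditions (i) and (ii) in the definition of a partial tilting module, so $T$ is partial tilting.

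The only point of real substance is the implication (c) $\Rightarrow$ (b), and this rests entirely on Demonet's Lemma~\ref{lem:demonet}; the remaining two implications are nothing more than the translation ``$\pdim \le 1 \iff$ locally free'' from Proposition~\ref{prop:GLS1}(a) combined with the bookkeeping in Remark~\ref{rem:tau-rig}. Accordingly I expect no obstacle here: the Corollary is a formal consequence of the preceding lemma, proposition, and remark, and the ``hard part'' has already been carried out in the proof of Lemma~\ref{lem:demonet}.
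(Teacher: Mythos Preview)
Your proof is correct and follows exactly the approach of the paper, which simply says ``Combine Remark~\ref{rem:tau-rig}, Lemma~\ref{lem:demonet} and Proposition~\ref{prop:GLS1}.'' You have merely made explicit the cycle of implications that this one-line proof leaves to the reader.
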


\begin{proof}
Combine Remark~\ref{rem:tau-rig}, Lemma~\ref{lem:demonet} and Proposition~\ref{prop:GLS1}.
\end{proof}

It remains an open problem if all rigid $H$-modules are locally
free.

We denote by $\cT(H)$ the exchange graph of support tilting 
pairs
for $H$. Its vertices are the isoclasses of support tilting pairs
$(T,P)$. 
Two different vertices $(T,P)$ and $(T',P')$ are
joined by an edge if and only if 
the basic $H$-modules $T \oplus P$ and $T' \oplus P'$ 
have
(up to isomorphism) exactly $n-1$ common indecomposable
direct summands.
(We refer to \cite[Section~2.3]{AIR14} for a more detailed 
explanation.)

\begin{Prop}
The exchange graph $\cT(H)$ of $H$ is $n$-regular.
\end{Prop}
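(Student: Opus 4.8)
The plan is to reduce the statement to the corresponding fact for the hereditary algebra $\tH$ via the $\tau$-tilting machinery of Adachi–Iyama–Reiten \cite{AIR14}, combined with Demonet's Lemma~\ref{lem:demonet} and Corollary~\ref{cor:Demonet}. By Corollary~\ref{cor:Demonet}, the basic partial tilting $H$-modules are exactly the basic $\tau$-rigid $H$-modules, so support tilting pairs for $H$ coincide with support $\tau$-tilting pairs for $H$. Hence $\cT(H)$ is literally the support $\tau$-tilting exchange graph of $H$, and a general theorem of \cite{AIR14} (the support $\tau$-tilting graph of any finite-dimensional algebra is $|A|$-regular, i.e. every almost complete support $\tau$-tilting pair has exactly two completions) would immediately give $n$-regularity, since $|H| = n$. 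So at minimum one invokes \cite[Corollary~2.38 or Theorem~2.18]{AIR14}. I would phrase the proof as: a support tilting pair for $H$ is the same as a support $\tau$-tilting pair by Corollary~\ref{cor:Demonet}; then cite the fact that for any finite-dimensional algebra $A$ every basic almost complete support $\tau$-tilting pair is a direct summand of exactly two support $\tau$-tilting pairs, so the exchange graph is $|A|$-regular; applied to $A=H$ with $|H|=n$ this yields the claim.

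\medskip

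Alternatively — and this may be the version the authors want, since Theorem~\ref{thm:mainresult1} is the running theme — one can transport the statement through the functors $\Red$ and $\Loc$ once the isomorphism $\cT(H)\cong\cT(\tH)$ of exchange graphs has been established (this is part of what Section~\ref{sec:tilting} is building toward). Since $\tH$ is a finite-dimensional hereditary algebra, its support tilting graph is classically known to be $n$-regular: every basic almost complete partial tilting module over a hereditary algebra has exactly two complements (this is Riedtmann–Schofield / Unger / Happel–Unger for the hereditary case, or again a special case of \cite{AIR14}), and allowing "support" (i.e. removing simple projectives) does not change the regularity. Pulling this back along the graph isomorphism $\cT(H)\cong\cT(\tH)$ from Theorem~\ref{thm:mainresult1} gives that $\cT(H)$ is $n$-regular. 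Since at this point in the paper the full Theorem~\ref{thm:mainresult1} has not yet been proved, the cleaner route for a proposition placed here is the direct $\tau$-tilting argument of the first paragraph.

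\medskip

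The main obstacle — or rather, the one point that needs a sentence of care rather than a bare citation — is making sure that "support tilting pair for $H$" as defined in this section really does coincide with "support $\tau$-tilting pair for $H$", so that the general $\tau$-tilting regularity theorem applies verbatim. This is exactly where Lemma~\ref{lem:demonet} / Corollary~\ref{cor:Demonet} is essential: for a general finite-dimensional algebra the classical tilting exchange graph is \emph{not} regular (indecomposable complements may fail to exist or the graph may be disconnected), and it is only the coincidence of $\tau$-rigid and partial tilting modules over $H$ that rescues regularity. Once that identification is spelled out, the regularity is a black-box application of \cite{AIR14}, and no genuine computation is required; I would not grind through the Bongartz-type completion argument but simply cite it.
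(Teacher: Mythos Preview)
Your first paragraph is exactly the paper's proof: identify support tilting pairs for $H$ with support $\tau$-tilting pairs via Corollary~\ref{cor:Demonet}, then invoke \cite[Theorem~2.18]{AIR14} to conclude $n$-regularity. The alternative route via $\cT(\tH)$ that you sketch is not used here (and, as you note, would be circular at this point).
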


\begin{proof}
By \cite[Theorem~2.18]{AIR14}
the exchange graph of support $\tau$-tilting pairs for $H$ is $n$-regular.
(This holds in fact for any finite-dimensional algebra with
$n$ isoclasses of simple modules.)
Now the statement follows
from Corollary~\ref{cor:Demonet}.
\end{proof}

\subsection{Partial tilting modules for $\hH$}\label{sec:hH-tilt}

By Remark~\ref{Rem-cplnoethA}, $\rep(\hH)$ 
has the Krull-Remak-Schmidt property.
By the same remark we can
apply \cite[Corollary~3.7.11]{CF04} to conclude that a partial tilting module
$T\in\rep(H)$ is a tilting module if and only if $\abs{T}=\abs{I}$.
(Note that in \cite{CF04} tilting modules are by definition finitely presented. Since $\hH$ is Noetherian, all finitely generated
$\hH$-modules are also finitely presented.) 

\begin{Lem} \label{lem:hH-ptilt}
For $T \in \rep(\hH)$ the following are equivalent:
\bit

\item[(a)]
$T$ is a partial tilting module;

\item[(b)]
$T$ is rigid and locally free. 

\eit
\end{Lem}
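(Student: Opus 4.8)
The plan is to prove the two implications separately, with the direction (a) $\Rightarrow$ (b) being essentially immediate and (b) $\Rightarrow$ (a) requiring the genuine work.

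First I would observe that (a) $\Rightarrow$ (b) is almost free: if $T$ is a partial tilting module then by definition $\pdim(T) \le 1$ and $\Ext^1_\hH(T,T) = 0$, so $T$ is rigid. To see $T$ is locally free, recall from Proposition~\ref{prop:hH-pdim}(a) that locally free $\hH$-modules have $\pdim \le 1$; the point is the converse. If $T$ were not locally free, then $\ul{T}$ fails to be a projective $\hS$-module, meaning some $T_i$ is not free over $\hH_i = \FF\pow{\veps_i}$. Since $\FF\pow{\veps_i}$ is a principal ideal domain, a finitely generated non-free module has a nonzero torsion submodule, hence a submodule isomorphic to $\FF[\eps_i]/(\eps_i^k)$ for some $1 \le k < c_i$, which has infinite projective dimension over $\hH_i$ and therefore (pulling back along $e_i \hH e_i = \FF\pow{\veps_i}$) contributes a submodule of $T$ of infinite projective dimension over $\hH$. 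This contradicts $\pdim(T) \le 1$ together with Proposition~\ref{prop:hH-pdim}(b), which says submodules of locally free modules are locally free --- or more directly, one invokes that $\pdim(T) < \infty$ forces local freeness by the same PID argument applied summandwise. So (a) implies (b).

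For (b) $\Rightarrow$ (a), suppose $T$ is rigid and locally free. Then $\pdim(T) \le 1$ by Proposition~\ref{prop:hH-pdim}(a), and $\Ext^1_\hH(T,T) = 0$ by hypothesis, so conditions (i) and (ii) in the definition of partial tilting module hold. That is precisely all that is required --- a partial tilting module needs only (i) and (ii), not the third short-exact-sequence condition. Hence $T$ is a partial tilting module, and we are done.

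The main subtlety --- and I expect this is where the lemma earns its place --- is that the equivalence is really asserting that the \emph{a priori} weaker homological condition "$\pdim(T) \le 1$ and $\Ext^1_\hH(T,T) = 0$" already forces local freeness, even though $\hH$ is not finite-dimensional and has global dimension $2$ (Proposition~\ref{prop:hH-pdim}(c)). The crucial inputs are Proposition~\ref{prop:hH-basic}(e), giving $e_i \hH e_i \cong \FF\pow{\veps_i}$ so that each "coordinate" $T_i = e_i T$ is an honest $\FF\pow{\veps_i}$-module, and the fact that $\FF\pow{\veps_i}$ is a local PID, so that finitely generated modules over it are free if and only if they are torsion-free if and only if they have finite projective dimension. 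Transporting the failure of local freeness into a submodule of $T$ of infinite projective dimension (using Proposition~\ref{prop:hH-pdim}(b)) gives the contradiction. Once that PID observation is in hand, the lemma is a formal consequence of the definitions and the earlier propositions; the rigidity hypothesis is only used for the forward reference to later results (e.g.\ Corollary~\ref{cor:Redrigid}), not for this equivalence itself.
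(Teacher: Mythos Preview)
Your direction (b) $\Rightarrow$ (a) is fine and matches the paper. The problem is entirely in (a) $\Rightarrow$ (b).

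You argue that $\pdim_{\hH}(T) \le 1$ alone forces $T$ to be locally free, but this is false. You have confused $\hH_i = \FF\pow{\veps_i}$ with $H_i = \FF[\eps_i]/(\eps_i^{c_i})$: over the discrete valuation ring $\FF\pow{\veps_i}$ every finitely generated module has projective dimension at most $1$, so a torsion $\hH_i$-module does \emph{not} have infinite projective dimension. In fact the paper points out, immediately after Proposition~\ref{prop:hH-pdim}, that the simple $\hH$-module $S_1$ satisfies $\pdim(S_1)=1$ while failing to be locally free. So your claim that ``$\pdim(T)<\infty$ forces local freeness by the same PID argument'' is precisely the step that breaks, and your final remark that rigidity plays no role in this equivalence is therefore also incorrect.

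The paper's argument uses rigidity in an essential way. From $\pdim(T)\le 1$ and $\Ext^1_{\hH}(T,T)=0$ one gets $\Ext^1_{\hH}(T,X)=0$ for every $X\in\fac(T)$ (apply $\Hom_{\hH}(T,-)$ to $0\to K\to T^m\to X\to 0$ and use $\Ext^2_{\hH}(T,-)=0$). Then one runs the construction of Lemma~\ref{lem:demonet} verbatim for $\hH$: if some $T_m$ is not free over $\FF\pow{\veps_m}$, there is a non-split extension $0\to S_m\to T_m'\to T_m\to 0$ of $\hH_m$-modules, from which one builds a quotient $Z$ of $T$ and a non-split sequence $0\to Z\to Y\to T\to 0$, contradicting $\Ext^1_{\hH}(T,Z)=0$. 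The point is that local freeness is extracted not from the projective-dimension bound but from vanishing of $\Ext^1$ against all factor modules.
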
  

\begin{proof}
Recall that for a partial tilting module $T$ we have 
$\Ext^1_\hH(T,X) = 0$ for
all $X\in\fac(T)$. Thus we can proceed as in the proof of
Proposition~\ref{lem:demonet}: If $T$ were not locally free, we
find a factor module
$X$ of $T$ together with a non-split exact sequence $0\ra X\ra Y\ra T\ra 0$,
a contradiction.

Conversely, if $T$ is locally free, we have $\pdim(T)\leq 1$ by
Proposition~\ref{prop:hH-pdim}(a).
\end{proof}

\subsection{Isomorphisms of exchange graphs}
\label{subsec:exgraph}
In view of Section~\ref{sec:hH-tilt}
we define a \emph{support tilting pair}
for $\hH$ to be a pair $(T,P)$, where $T\in\rep_\lf(\hH)$ is basic 
rigid, and $P\in\rep_\lf(\hH)$ is basic projective
such that $\Hom_\hH(P,T)=0$ and $\abs{T}+\abs{P}=\abs{I}$.

\begin{Prop} \label{prop:RcT-iso}
$\Red$ induces an isomorphism 
\[
\Red\df \cT(\hH) \xrightarrow{\sim} \cT(H)
\]
between the exchange graph of support tilting pairs for $\hH$ and
the exchange graph of support tilting pairs for $H$. 
In particular, $\cT(\hH)$ is an $\abs{I}$-regular graph.   
\end{Prop}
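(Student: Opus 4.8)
The idea is to promote the bijections of Proposition~\ref{prop:Red-bij} (rigid locally free modules) and Lemma~\ref{lem:Proj-bij} (projective modules) to a bijection on vertices of the exchange graphs, and then to observe that the edge relation --- defined through counting common indecomposable summands --- is preserved. The $|I|$-regularity will then be inherited from $\cT(H)$, which is $|I|$-regular by the preceding Proposition.

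\textbf{Step 1: $\Red$ is well defined on vertices.} Let $(T,P)$ be a support tilting pair for $\hH$. By Corollary~\ref{cor:Redrigid}, $\Red(T)$ is rigid and locally free, hence a partial tilting $H$-module by Corollary~\ref{cor:Demonet}. Writing a basic decomposition $T = \bigoplus_k T_k$ into indecomposables, the $\Red(T_k)$ are indecomposable (Lemma~\ref{lem:rig-indec}) and pairwise non-isomorphic, since $\Red$ is injective on isoclasses of rigid locally free modules (Proposition~\ref{prop:Red-bij}); thus $\Red(T)$ is basic with $|\Red(T)| = |T|$. Likewise $\Red(P)$ is basic projective with $|\Red(P)| = |P|$ by Lemma~\ref{lem:Proj-bij}, the $He_i$ being pairwise non-isomorphic indecomposable projectives. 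Since $P$ is projective, $\Ext^1_\hH(P,T)=0$, so Lemma~\ref{lem:hH-unique} gives $\FF\otimes_R\Hom_\hH(P,T)\cong\Hom_H(\Red(P),\Red(T))$, whence $\Hom_H(\Red(P),\Red(T))=0$ because $\Hom_\hH(P,T)=0$. Finally $|\Red(T)|+|\Red(P)| = |T|+|P| = |I|$, so $(\Red(T),\Red(P))$ is a support tilting pair for $H$.

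\textbf{Step 2: bijectivity on vertices.} Injectivity is immediate from Proposition~\ref{prop:Red-bij} and Lemma~\ref{lem:Proj-bij}. For surjectivity, take a support tilting pair $(T_0,P_0)$ for $H$; by Corollary~\ref{cor:Demonet}, $T_0$ is rigid locally free, so Proposition~\ref{prop:Red-bij} provides a rigid locally free $\hH$-module $T$ with $\Red(T)\cong T_0$ and $|T|=|T_0|$, and Lemma~\ref{lem:Proj-bij} provides a basic projective $\hH$-module $P$ with $\Red(P)\cong P_0$ and $|P|=|P_0|$. To see that $(T,P)$ is a support tilting pair for $\hH$, the only point needing care is $\Hom_\hH(P,T)=0$: this Hom-space is a finitely generated (indeed free, by Lemma~\ref{lem:homfree}) $R$-module, and $\FF\otimes_R\Hom_\hH(P,T)\cong\Hom_H(\Red(P),\Red(T))=\Hom_H(P_0,T_0)=0$ by Lemma~\ref{lem:hH-unique}, so it vanishes by Nakayama's lemma over the local ring $R$. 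Together with $|T|+|P|=|I|$ this shows $(T,P)$ maps to $(T_0,P_0)$.

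\textbf{Step 3: edges, and $|I|$-regularity.} Since $\Red$ is injective on isoclasses of indecomposable rigid locally free $\hH$-modules and bijective on isoclasses of indecomposable projective $\hH$-modules, two such indecomposables $U,U'$ satisfy $U\cong U'$ if and only if $\Red(U)\cong\Red(U')$. Hence, for support tilting pairs $(T,P)$ and $(T',P')$, the number of common indecomposable summands of $T\oplus P$ and $T'\oplus P'$ equals the number of common indecomposable summands of $\Red(T)\oplus\Red(P)$ and $\Red(T')\oplus\Red(P')$. Therefore $\Red$ sends edges to edges and non-edges to non-edges, i.e.\ it is a graph isomorphism $\cT(\hH)\xrightarrow{\sim}\cT(H)$. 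As $\cT(H)$ is $|I|$-regular by the previous Proposition, so is $\cT(\hH)$. The one genuinely delicate point above is the implication $\Hom_H(\Red(P),\Red(T))=0\Rightarrow\Hom_\hH(P,T)=0$ in Step~2: it does not follow from exactness of $\Red$ (indeed $\Red$ is not faithful), but only from freeness of $\Hom_\hH(P,T)$ over $R$ combined with Nakayama via Lemma~\ref{lem:hH-unique}.
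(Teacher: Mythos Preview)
Your proof is correct and follows essentially the same strategy as the paper's: both arguments hinge on Proposition~\ref{prop:Red-bij} for the bijection on rigid locally free modules, on Lemma~\ref{lem:Proj-bij} for projectives, and on Lemma~\ref{lem:hH-unique} (applied with $\Ext^1_\hH(P,T)=0$) to obtain the equivalence $\Hom_\hH(P,T)=0\iff\Hom_H(\Red(P),\Red(T))=0$. Your write-up is simply more explicit than the paper's --- you spell out the Nakayama/freeness step for the backward implication and the edge-preservation argument, both of which the paper leaves implicit --- but there is no genuine difference in approach.
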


\begin{proof}
By Lemma~\ref{lem:hH-ptilt}
the $T\in\rep_\lf(\hH)$ which are rigid are precisely the partial
tilting modules for $\hH$. For such $T$ also $\Red(T)\in\rep_\lf(H)$ is a partial
tilting module for $H$, and $\abs{T}=\abs{\Red(T)}$. If $P\in\rep_\lf(\hH)$ is
projective we have in particular $\Ext^1_\hH(P,T)=0$ and thus
$\Hom_\hH(P,T)=0$ if and only if $\Hom_H(\Red(P), \Red(T))=0$ by
Lemma~\ref{lem:hH-unique}. We conclude, that a pair $(T,P)$ in
$\rep_\lf(\hH)$ is a support tilting pair if and only if $(\Red(T),\Red(P))$
is a support tilting pair for $H$. 
The result follows now from
Propositions~\ref{prop:hH-uniqueb}
and \ref{prop:Red-bij}.
\end{proof}

\begin{Prop}\label{prop:EcT-iso}
$\Loc$ induces an isomorphism 
\[
\Loc\df \cT(\hH) \xrightarrow{\sim} \cT(\tH)
\]
between the exchange graph of support tilting pairs for $\hH$ and
the exchange graph of support tilting pairs for $\tH$. 
In particular, $\cT(\hH)$ is a connected, $\abs{I}$-regular graph.   
\end{Prop}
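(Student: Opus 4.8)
The strategy closely parallels the proof of Proposition~\ref{prop:RcT-iso}, with $\Loc$ replacing $\Red$; the one genuinely new ingredient is that the surjectivity direction requires some input on the image of $\Loc$. First I would observe that by Lemma~\ref{lem:hH-ptilt} the rigid locally free $\hH$-modules are exactly the partial tilting $\hH$-modules, and that $\Loc$ carries such a $T$ to a rigid $\tH$-module $\Loc(T)$ which is automatically a partial tilting $\tH$-module (since $\tH$ is hereditary, condition (i) is vacuous). Next, I would use Lemma~\ref{lem:functors1} to see $|T| = |\Loc(T)|$ on the level of rank/dimension vectors of indecomposable summands, combined with Lemma~\ref{lem:Locindec} (preservation of indecomposability) and Proposition~\ref{prop:Loc-inj}: this shows $\Loc$ sends a basic rigid locally free $T$ to a basic rigid $\tH$-module with the same number of non-isomorphic indecomposable summands. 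For the projective part, $\Loc$ restricts to the bijection of Lemma~\ref{lem:Proj-bij} between projective $\hH$-modules and projective $\tH$-modules, and by Lemma~\ref{lem:functors3} (together with $\Ext^1_\hH(P,T)=0$ for $P$ projective) we get $\Hom_\hH(P,T)=0 \iff \Hom_\tH(\Loc(P),\Loc(T))=0$. Hence $(T,P)$ is a support tilting pair for $\hH$ if and only if $(\Loc(T),\Loc(P))$ is a support tilting pair for $\tH$.

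This already shows that $\Loc$ induces a well-defined, injective map on vertices of the exchange graphs that reflects the adjacency relation (two pairs share $n-1$ common indecomposable summands on one side iff they do on the other, by the same counting/indecomposability facts). The remaining point is \textbf{surjectivity on vertices}, i.e.\ every support tilting pair for $\tH$ is in the image. Since the projective part is handled by Lemma~\ref{lem:Proj-bij}, it suffices to show every basic partial tilting $\tH$-module arises as $\Loc(T)$ for some rigid locally free $\hH$-module $T$; equivalently, every rigid $\tH$-module is in the image of $\Loc$ on rigid locally free $\hH$-modules. The cleanest route is a connectedness argument: $\cT(\tH)$ is connected (this is the categorical realization of the acyclic cluster algebra exchange graph, cited in the introduction, and will be recalled here), $\cT(\hH)$ is $|I|$-regular by Proposition~\ref{prop:RcT-iso}, and the $\Loc$-induced graph map is a local isomorphism near every vertex in its image (it matches the $|I|$ neighbours bijectively, since a mutation on the $\hH$-side maps to a mutation on the $\tH$-side and $\cT(\tH)$ is also $|I|$-regular). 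A connected graph which receives a covering-like locally bijective map from a graph, with the map injective, must have that map surjective onto the connected component of any point in the image — so starting from the image of the trivial tilting pair $(\hH,0)\mapsto(\tH,0)$ and propagating along edges, one reaches all of $\cT(\tH)$.

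Concretely I would argue: let $(\Loc(T),\Loc(P))$ be in the image, and let $(\bar T',\bar P')$ be an adjacent vertex in $\cT(\tH)$, obtained by a single mutation at an indecomposable summand. Because $\cT(\hH)$ is $|I|$-regular and $\Loc$ matches neighbours injectively, the corresponding mutation of $(T,P)$ in $\cT(\hH)$ produces a support tilting pair $(T',P')$ for $\hH$ with $(\Loc(T'),\Loc(P'))$ adjacent to $(\Loc(T),\Loc(P))$; by the local bijectivity of neighbours, $(\Loc(T'),\Loc(P')) = (\bar T',\bar P')$. By induction along paths in the connected graph $\cT(\tH)$, every vertex is reached, so $\Loc$ is surjective on vertices. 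Together with injectivity and the fact that it preserves and reflects edges, $\Loc\df\cT(\hH)\to\cT(\tH)$ is a graph isomorphism; since $\cT(\tH)$ is connected and $\cT(\hH)$ is $|I|$-regular, $\cT(\hH)$ is a connected $|I|$-regular graph.

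\textbf{Main obstacle.} The only real difficulty is surjectivity: establishing that each rigid $\tH$-module is realized by $\Loc$. The example in Section~\ref{subsec:example} shows $\Loc$ is far from essentially surjective on all of $\rep_\lf(\hH)$ (it wildly identifies modules), so one cannot lift modules naively — one must lift along the \emph{mutation structure} of the exchange graph and use connectedness of $\cT(\tH)$ as the external input. A subtle point to verify carefully is that a single mutation step on the $\hH$-side indeed maps to the prescribed single mutation on the $\tH$-side (rather than to some other neighbour); this follows because $\Loc$ takes the "Bongartz-type" completion / exchange sequence on the $\hH$-side to the analogous exchange sequence on the $\tH$-side, using exactness of $\Loc$ (Lemma~\ref{lem:exact}) and the Hom/Ext comparison (Lemma~\ref{lem:functors3}), but it should be spelled out.
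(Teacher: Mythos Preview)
Your proposal is correct and follows essentially the same approach as the paper: establish that $\Loc$ induces an injective map of graphs $\cT(\hH)\to\cT(\tH)$ by arguments parallel to those for $\Red$, then invoke the connectedness and $|I|$-regularity of $\cT(\tH)$ together with the $|I|$-regularity of $\cT(\hH)$ (from Proposition~\ref{prop:RcT-iso}) to force surjectivity. One minor remark: the ``subtle point'' you flag at the end---whether a specific mutation on the $\hH$-side lands on the prescribed neighbour on the $\tH$-side---need not be verified, since the pigeonhole argument (injectivity on vertices plus edge-preservation plus equal valency) already shows that \emph{all} $|I|$ neighbours of $(\Loc(T),\Loc(P))$ lie in the image, which is all the inductive step requires.
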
  

\begin{proof}
Arguing as in the preparation for Proposition~\ref{prop:RcT-iso} we conclude that $\Loc$ induces an injective map of graphs from $\cT(\hH)$ to $\cT(\tH)$.

Now, 
as a consequence of \cite[Theorem~19]{Hu11},
the graph $\cT(\tH)$ is connected and $\abs{I}$-regular. 
Since $\cT(\hH)$ is $\abs{I}$-regular by 
Proposition~\ref{prop:RcT-iso}, this injection must be also surjective.
\end{proof}

\begin{Cor}\label{cor:Tilt-bij}
The graph isomorphisms
\[
\xymatrix{
\cT(\hH)
\ar[r]^{\Loc} \ar[d]^{\Red} & \;\;
\cT(\tH)
\\
\cT(H)
}
\]
induce bijections
\[
\xymatrix{
\{ \text{tilting $\hH$-modules} \}/\!\!\cong  \;\;
\ar[d]^{\Red} \ar[r]^{\Loc} 
&\;\;
\{ \text{tilting $\tH$-modules} \}/\!\!\cong
\\
\{ \text{tilting $H$-modules in $\rep_\lf(H)$} \}/\!\!\cong .
}
\]
\end{Cor}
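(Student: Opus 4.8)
The plan is to deduce the corollary from the graph isomorphisms of Propositions~\ref{prop:RcT-iso} and~\ref{prop:EcT-iso} by pinning down which vertices of the exchange graphs correspond to tilting modules. For each $A \in \{\hH, H, \tH\}$, a support tilting pair $(T,P)$ satisfies $P = 0$ precisely when $|T| = |I|$, and for a basic partial tilting module this is equivalent to $T$ being a tilting module: for $H$ this is the characterization recalled at the start of Section~\ref{sec:tilting}, for $\hH$ it is the statement of Section~\ref{sec:hH-tilt} (combining \cite[Corollary~3.7.11]{CF04} with Lemma~\ref{lem:hH-ptilt}), and for $\tH$ it is the classical fact for finite-dimensional hereditary algebras. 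So the vertices of $\cT(A)$ of the form $(T,0)$ are canonically in bijection with the isoclasses of basic tilting $A$-modules.

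Next I would observe that the graph isomorphisms respect this distinguished class of vertices. Recall from the proofs of Propositions~\ref{prop:RcT-iso} and~\ref{prop:EcT-iso} that $\Red$ sends a vertex $(T,P)$ to $(\Red(T),\Red(P))$ and $\Loc$ sends it to $(\Loc(T),\Loc(P))$. Since $\Red$ and $\Loc$ are additive and, by Lemma~\ref{lem:Proj-bij}, induce bijections between the isoclasses of indecomposable projective modules, we get $|\Red(P)| = |P| = |\Loc(P)|$ for every basic projective $P \in \rep_\lf(\hH)$; in particular $\Red(P) = 0$ if and only if $P = 0$ if and only if $\Loc(P) = 0$. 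As $\Red$ and $\Loc$ (and their inverses) are bijective on vertices, it follows that they restrict to bijections from $\{(T,0) \in \cT(\hH)\}$ onto $\{(T',0) \in \cT(H)\}$ and onto $\{(T',0) \in \cT(\tH)\}$ respectively: given, say, a tilting $H$-module $U$, the $\Red$-preimage of $(U,0)$ is some $(T,P)$ with $\Red(P)=0$, hence $P=0$, so $T$ is a tilting $\hH$-module with $\Red(T)=U$.

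Combining the two steps and using the identification of the first step, $\Red$ and $\Loc$ induce the asserted bijections between the isoclasses of basic tilting modules over $\hH$, $H$ and $\tH$; as both arrows in the diagram emanate from $\cT(\hH)$, there is no commutativity to check. The only delicate point, and it is a minor one, is the bookkeeping around \emph{basic} modules and counts of indecomposable summands: one needs $\Red$ and $\Loc$ to preserve the number of indecomposable summands separately on the $T$-part and on the $P$-part, which rests on their additivity together with Lemma~\ref{lem:Proj-bij} and the indecomposability-preserving bijections of Propositions~\ref{prop:Red-bij} and~\ref{prop:Loc-inj}.
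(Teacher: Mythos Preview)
Your proposal is correct and follows essentially the same approach as the paper: the paper's proof also rests on Lemma~\ref{lem:Proj-bij} to argue that the graph isomorphisms send pairs of the form $(T,0)$ to pairs of the form $(T',0)$, and then identifies such pairs with (basic) tilting modules. You have simply spelled out in more detail the identification of the $(T,0)$-vertices with tilting modules and the reason the inverse maps also preserve vanishing of the projective component.
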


\begin{proof}
The functors $\Red$ and $\Loc$ preserve isomorphism classes 
of projectives, see Lemma~\ref{lem:Proj-bij}.
In particular, $(T,0)$ is a support tilting pair for $\hH$ if and only
if $(\Red(T),0)$ and $(\Loc(T),0)$ are support tilting pairs for
$H$ and $\tH$, respectively.
\end{proof}

\begin{Cor}\label{cor:Loc-bij}
$\Loc$ induces a bijection 
\[
\xymatrix{
\{ \text{rigid locally free $\hH$-modules} \}/\!\!\cong \;\;
\ar[r]^<<<<{\Loc} &\;\;
\{ \text{rigid $\tH$-modules} \}/\!\!\cong .
}
\]
This bijection and its inverse preserve indecomposability.
\end{Cor}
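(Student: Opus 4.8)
The plan is to combine the injectivity statement already established in Proposition~\ref{prop:Loc-inj} with a surjectivity argument that goes through tilting modules, using the isomorphism of exchange graphs from Proposition~\ref{prop:EcT-iso}. The point is that an indecomposable rigid $\tH$-module occurs as a summand of a tilting $\tH$-module, and tilting modules are already known to lift along $\Loc$.

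First I would recall from Proposition~\ref{prop:Loc-inj} that $\Loc$ induces a well-defined injection on isoclasses of rigid locally free modules which preserves indecomposability; so it remains only to prove surjectivity, and by additivity and the Krull-Remak-Schmidt property (Remark~\ref{Rem-cplnoethA} and the fact that $\rep(\tH)$ is Krull-Remak-Schmidt as $\tH$ is finite-dimensional) it suffices to hit every \emph{indecomposable} rigid $\tH$-module $X$. Since $\tH$ is a finite-dimensional hereditary algebra, $X$ is a partial tilting module, hence a direct summand of some basic tilting $\tH$-module $T_{\tH}$ (complete $X$ to a tilting module; this is the Bongartz completion, valid over any Artin algebra). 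By Corollary~\ref{cor:Tilt-bij} there is a tilting $\hH$-module $T_{\hH}$ with $\Loc(T_{\hH}) \cong T_{\tH}$. Now $T_{\hH}$ is rigid and locally free, and $\Loc$ preserves indecomposability (Lemma~\ref{lem:Locindec}) and is additive, so $\Loc$ carries the indecomposable summands of $T_{\hH}$ bijectively onto the indecomposable summands of $T_{\tH}$; in particular one indecomposable summand $M$ of $T_{\hH}$ satisfies $\Loc(M) \cong X$. Since $M$ is rigid and locally free, this shows $X$ lies in the image, proving surjectivity.

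For the last sentence, that the bijection and its inverse preserve indecomposability: the forward direction is Lemma~\ref{lem:Locindec}, and the inverse direction follows from the Krull-Remak-Schmidt property on both sides together with the additivity of $\Loc$ — if $\Loc(M)$ is indecomposable but $M = M_1 \oplus M_2$ nontrivially, then $\Loc(M) = \Loc(M_1) \oplus \Loc(M_2)$ is a nontrivial decomposition (neither summand can vanish, e.g.\ by Lemma~\ref{lem:functors1} since rank vectors are preserved), a contradiction.

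I do not expect a serious obstacle here; the only mild subtlety is making sure the Bongartz-type completion is being invoked for the correct algebra ($\tH$, where classical tilting theory over an Artin algebra applies cleanly) rather than for $\hH$, and then transporting back via the already-proved exchange-graph isomorphism. Everything else is bookkeeping with Krull-Remak-Schmidt and the additivity and rank-preservation properties of $\Loc$ recorded in Lemma~\ref{lem:functors1} and Lemma~\ref{lem:Locindec}.
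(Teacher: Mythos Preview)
Your proposal is correct and follows essentially the same route as the paper: use Proposition~\ref{prop:Loc-inj} for injectivity, complete a given rigid $\tH$-module to a tilting module via Bongartz, lift the tilting module through Corollary~\ref{cor:Tilt-bij}, and match indecomposable summands using Lemma~\ref{lem:Locindec} and Krull--Remak--Schmidt. The only cosmetic difference is that you reduce to indecomposable rigids first whereas the paper completes an arbitrary basic rigid module directly; the content is the same.
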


\begin{proof}
We know already from Proposition~\ref{prop:Loc-inj} 
that the map in the statement is injective.
Let $M \in \rep(\tH)$ be rigid.
Without loss of generality we can assume that $M$ is basic.
Then there exists some rigid $C \in \rep(\tH)$ such that
$T := M \oplus C$ is a basic tilting module.
By Proposition~\ref{prop:EcT-iso} there is some some basic tilting
module $\widehat{T} \in \rep(\hH)$ with
$\Loc(\widehat{T}) \cong T$.
(Here we used Corollary~\ref{cor:Tilt-bij}.)
Now we decompose $\widehat{T} = \widehat{T}_1 \oplus \cdots \oplus
\widehat{T}_n$ and $T = T_1 \oplus \cdots \oplus T_n$ into
indecomposables.
Now we use Lemma~\ref{lem:Locindec} and 
Proposition~\ref{prop:Loc-inj}
to see that there is a permutation
$\sigma$ such that $\Loc(\widehat{T}_i) \cong T_{\sigma(i)}$
for $1 \le i \le n$.
In particular, there is some rigid locally free $\widehat{M} 
\in \add(\widehat{T})$ with $\Loc(\widehat{M}) \cong M$.
This yields the desired bijection.

By Lemma~\ref{lem:Locindec}, the functor $\Loc$ preserves
indecomposability.
Now the Krull-Remak-Schmidt property for $\rep(\hH)$ and
$\rep(\tH)$ together with
the additivity of $\Loc$ implies that the inverse of the bijection also
preserves indecomposability.
\end{proof}

Combining 
Proposition~\ref{prop:Red-bij} and 
Corollary~\ref{cor:Loc-bij}
finishes the proof of Theorem~\ref{thm:mainresult1}.


\section{{Proof of Theorem~\ref{thm:mainresult2}}}\label{sec:proof}


\subsection{{Proof of Theorem~\ref{thm:mainresult2}(a)}}
\label{subsec:proof2}
Let $\tH := \tH_{K}(C,D,\Ome)$.
Recall, that by the results of Crawley-Boevey \cite{CB93} and Ringel \cite{Rin94} we have a bijection
\begin{align*}
\{ \text{indecomposable rigid $\tH$-modules} \}/\!\! \cong &\to
\Del_\rS(C,\Ome)
\\ 
M &\mapsto \dimv(M) = (\dim_{\FF\lpw{\veps_i}} M_i)_{i\in I}.
\end{align*}
In particular, if $M\in\rep(\tH)$ is indecomposable rigid, then
\[
\bil{\dimv(M),\dimv(M)}=c_i
\] 
for some $i\in I$. 
Moreover, in this case $\End_\tH(M) \cong \FF\lpw{\veps_i}$.

Now, if $M\in\rep_\lf(H)$ is indecomposable rigid, by
Proposition~\ref{prop:Red-bij} there exists an, up to isomorphism unique,
indecomposable rigid $\widehat{M}\in\rep_\lf(\hH)$ with 
$\Red(\widehat{M}) \cong M$, and we know that
$\rkv_\hH(\widehat{M}) = \rkv_H(M)$. 
By Corollary~\ref{cor:Locrigid} and Lemma~\ref{lem:Locindec}
also
$\Loc(\widehat{M})\in\rep(\tH)$ is indecomposable rigid, and we
know that
$\rkv_\hH(\widehat{M}) = \dimv(\Loc(\widehat{M}))$.
Thus $\rkv_H(M) \in \Del_\rS(C,\Ome)$.

Conversely, if $\alp \in \Del_\rS(C,\Ome)$ there exists a, up to isomorphism unique, 
indecomposable rigid representation $N\in\rep(\tH)$ with $\dimv_\tH(N)=\alp$. 
By Lemma~\ref{lem:Locindec} and Corollary~\ref{cor:Loc-bij}
there is an indecomposable
rigid $\widehat{N} \in \rep_\lf(\hH)$ with
$\Loc(\widehat{N})\cong N$. 
We know that
$\rkv_\hH(\widehat{N}) = \dimv_\tH(N)$.
Now, $\Red(\widehat{N}) \in \rep_\lf(H)$ is indecomposable rigid by
Lemma~\ref{lem:rig-indec}.
We obtain
\[
\rkv_H(\Red(\widehat{N})) = \rkv_\hH(\widehat{N}) = \alp.
\]

This finishes the proof of 
Theorem~\ref{thm:mainresult2}(a).

\subsection{Bongartz complement for hereditary algebras}
\label{subsec:Bongartz}
For the proof of Theorem~\ref{thm:mainresult2}(b),
we need the following result about hereditary algebras, which should be well known.

\begin{Lem} \label{lem:bong} 
Let $A$ be a finite-dimensional hereditary $\FF$-algebra, and let $T\in\rep(A)$ be indecomposable rigid and non-projective.  
Then there exists $X\in\rep(A)$ such that $T\oplus X$
  is a tilting module and $\Hom_A(T,X)=0$.
\end{Lem}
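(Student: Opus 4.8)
The plan is to produce the complement $X$ as a Bongartz complement, using the special features of the hereditary situation to force the vanishing $\Hom_A(T,X)=0$. First I would recall the classical Bongartz construction: since $T$ is rigid with $\pdim(T)\le 1$ (automatic over a hereditary algebra), pick a short exact sequence
\[
0 \to A \to E \to T^{d} \to 0
\]
representing a basis of $\Ext^1_A(T,A)$ as an $\End_A(T)^{\op}$-module (so $d = \dim \Ext^1_A(T,A)$ computed appropriately over the endomorphism ring), and set $B := E \oplus T$. The standard argument shows $B$ is a partial tilting module with $\Ext^1_A(B,B)=0$, and that $\mathrm{Fac}(T) \subseteq {}^{\perp_1}B$; moreover $B$ has $|B| = |A|$ summands up to the usual counting, hence is a tilting module. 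So $X := E$ (or rather the basic module underlying $E$, after removing any copies of summands of $T$) gives $T \oplus X$ tilting.

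The remaining point is to arrange $\Hom_A(T,X)=0$. Here I would use that $A$ is hereditary: apply $\Hom_A(T,-)$ to the defining sequence $0 \to A \to E \to T^{d} \to 0$ to get
\[
0 \to \Hom_A(T,A) \to \Hom_A(T,E) \to \Hom_A(T,T^{d}) \xrightarrow{\ \partial\ } \Ext^1_A(T,A) \to \Ext^1_A(T,E) \to 0,
\]
where the last term vanishes because $B$ is rigid. By construction the connecting map $\partial$ is surjective — indeed the sequence was chosen so that $\partial$ hits a generating set of $\Ext^1_A(T,A)$ — and in fact one chooses the extension precisely so that $\partial$ is an isomorphism: $\Ext^1_A(T,A)$ is a free $\End_A(T)$-module (as $T$ is a brick-like rigid module over a hereditary algebra, with $\End_A(T)$ a division ring by Schur, since $T$ is indecomposable rigid over a hereditary algebra), so a basis exists and $d$ is its rank over $\End_A(T)$. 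Then exactness forces $\Hom_A(T,E) \cong \Hom_A(T,A)$, and since $T$ is indecomposable non-projective we have $\Hom_A(T,A)=0$ (any nonzero map from an indecomposable non-projective to $A$ factors through a projective, but a hereditary algebra has no maps from a non-projective indecomposable into a projective — more carefully, $\Hom_A(T,A)=0$ follows because $T$ has no projective summand and $A$ is hereditary, so $\mathrm{proj.\,dim}$ and the structure of the AR quiver give this; alternatively use $\langle \dimv T, \dimv A\rangle = \dim\Hom - \dim\Ext^1 \le 0$ together with the known fact that indecomposable non-projective modules over hereditary algebras satisfy $\Hom_A(T,A)=0$). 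Hence $\Hom_A(T,E)=0$, so $X:=E$ works.

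The main obstacle I anticipate is pinning down the claim $\Hom_A(T,A)=0$ cleanly and the freeness of $\Ext^1_A(T,A)$ over $\End_A(T)$ in the possibly non-algebraically-closed, non-split setting relevant to $\tH$ (which is a species, not a path algebra of a quiver over $\FF$). Over such an algebra $\End_A(T)$ is a finite-dimensional division algebra, not necessarily $\FF$ itself, and one must be careful that the Bongartz sequence is built using an $\End_A(T)$-module basis of $\Ext^1_A(T,A)$ rather than an $\FF$-basis; the count $|B|=|A|$ in the tilting criterion then needs the version over the endomorphism ring. I would handle this by invoking the Bongartz lemma in the general form valid for finite-dimensional hereditary algebras (e.g.\ as in Ringel's or Happel--Ringel's treatment) rather than re-deriving it, and by citing that for a hereditary algebra $\Hom_A(T,A)=0$ whenever $T$ has no projective direct summand — this is a standard consequence of the fact that over a hereditary algebra $\mathrm{rad}^\infty$ considerations give no nonzero morphisms from a non-projective indecomposable into $\mathrm{add}(A)$ (equivalently, $A$ is in the "preprojective-free of torsion at $T$" part). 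With that, the proof is short: take the Bongartz complement $X$ and read off $\Hom_A(T,X)=0$ from the long exact $\Hom$--$\Ext$ sequence as above.
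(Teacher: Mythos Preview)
Your proposal is correct and follows essentially the same approach as the paper: build the Bongartz complement $X$ via a universal extension $0\to A\to X\to T^s\to 0$ over the division ring $\End_A(T)$, then read off $\Hom_A(T,X)\cong\Hom_A(T,A)=0$ from the long exact sequence, the last vanishing holding because submodules of projectives are projective over a hereditary algebra. Your parenthetical about stripping copies of $T$ from $E$ is unnecessary (it follows from $\Hom_A(T,E)=0$), and your justification of $\Hom_A(T,A)=0$ is a bit scattered, but the argument is sound; the paper cites Ringel for $\End_A(T)$ being a division ring, whereas you invoke the Happel--Ringel lemma directly, which is equally valid.
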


\begin{proof} 
By \cite{Rin94}, $B:=\End_A(T)$ is isomorphic to the endomorphism ring of a simple $A$-module. In particular,
  $B$ is a division algebra. 
Let $s$ be the dimension of the (right) $B$-vector
  space $\Ext_A^1(T,A)$, and let
  $\eta_1,\ldots,\eta_s$ be a $B$-basis of $\Ext_A^1(T,A)$.
Following the proof of \cite[Lemma~2.1]{B81} let
\[
0 \to A \to X \to T^s \to 0
\]
be the short exact sequence whose pullback under the $i$th inclusion
$T \to T^s$ is $\eta_i$ for $1 \le i \le s$.
Applying $\Hom_A(T,-)$ yields an exact sequence
\begin{multline*}
0 \to \Hom_A(T,A) \to \Hom_A(T,X) \to \Hom_A(T,T^s)\\
\xrightarrow{\delta} \Ext_A^1(T,A) \to \Ext_A^1(T,X)\to 0
\end{multline*}
of $B$-linear maps,
where the connecting homomorphism $\delta$ is surjective by construction.
Since $T$ is a brick, $\delta$ is for dimension reasons an isomorphism.
Thus 
\[
\Hom_A(T,A) \cong \Hom_A(T,X) \quad\text{and}\quad \Ext^1_A(T,X)=0.
\]
It is easy to see that also $\Ext^1_A(X,X)=0=\Ext^1_A(X,T)$. Thus $T\oplus X$
is by definiton a tilting module. Finally, since $A$ is hereditary and
$T$ is indecomposable and non-projective, we have $\Hom_A(T,A)=0$.
\end{proof}

\subsection{{Proof of Theorem~\ref{thm:mainresult2}(b)}}\label{subsec:proof3}
We begin with the following key result, which is interesting on its own right.

\begin{Prop} \label{prop:hHendo}
Let $M\in\rep_\lf(\hH)$ be indecomposable rigid. 
Then
\bit

\item[(a)]
$\bil{\rkv_\hH(M),\rkv_\hH(M)} = c_i$ for some $i\in I$.

\item[(b)]
$\End_\hH(M) \cong \FF\pow{\eta}$ as an $R$-algebra,
where $\eta$ is a variable.

\item[(c)]
$M$ is free as an $\End_\hH(M)$-module.

\eit
\end{Prop}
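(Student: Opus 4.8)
The plan is to transport everything through the localization functor $\Loc$, which by Corollary~\ref{cor:Loc-bij} gives a bijection between indecomposable rigid locally free $\hH$-modules and indecomposable rigid $\tH$-modules, and then use the known structure theory of the hereditary algebra $\tH$. First I would set $N := \Loc(M)$, which is an indecomposable rigid $\tH$-module by Corollary~\ref{cor:Locrigid} and Lemma~\ref{lem:Locindec}. By the Crawley-Boevey--Ringel classification (invoked in Section~\ref{subsec:proof2}), $\dimv_\tH(N)$ is a real Schur root, and $\End_\tH(N)$ is isomorphic to the endomorphism ring of a simple $\tH$-module, which is one of the division algebras $\FF\lpw{\veps_i}$; correspondingly $\bil{\dimv_\tH(N),\dimv_\tH(N)} = c_i$ for the relevant $i$. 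Since $\rkv_\hH(M) = \dimv(\Loc(M))$ by Lemma~\ref{lem:functors1}, part (a) follows immediately.

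For (b), the idea is to compute $\End_\hH(M)$ as a lattice inside $\End_\tH(N)$. By Lemma~\ref{lem:homfree}, $E := \End_\hH(M)$ is a free $R$-module, and by Lemma~\ref{lem:functors3} the natural map $K \otimes_R E \to \End_\tH(N) \cong \FF\lpw{\veps_i}$ is an isomorphism of $K$-algebras. So $E$ is an $R$-order inside the field $\FF\lpw{\veps_i}$, hence a commutative local domain, finitely generated and free as an $R = \FF\pow{\veps}$-module. On the other hand, by Remark~\ref{Rem-cplnoethA} (or because $M$ is indecomposable in a Krull-Remak-Schmidt category), $E$ is local; and by Corollary~\ref{cor:hH-unique}, $E/(\veps E) \cong \End_H(\Red(M))$. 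I would then argue that a complete local Noetherian domain $E$ which is module-finite and free over $\FF\pow{\veps}$, with fraction field the degree-$c_i$ extension $\FF\lpw{\veps_i} = \FF\lpw{\veps^{1/c_i}}$, must be isomorphic to $\FF\pow{\veps^{1/c_i}} = \FF\pow{\eta}$ with $\eta = \veps^{1/c_i}$ — the key point being that the integral closure of $\FF\pow{\veps}$ in $\FF\lpw{\veps^{1/c_i}}$ is exactly $\FF\pow{\veps^{1/c_i}}$, and any intermediate order that is free of the right rank over $R$ and local must equal it, since proper suborders of $\FF\pow{\eta}$ (like $\FF\pow{\eta} $ with some power conditions) either fail to be free of rank $c_i$ or fail the local/domain conditions. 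Part (c) is then immediate: $E \cong \FF\pow{\eta}$ is a PID, and $M$, being finitely generated over $E$ (it is finitely generated over $\hH$ hence over the subring $E$? — more carefully, $M$ is finitely generated as an $\FF\pow{\veps}$-module by Lemma~\ref{lem:free1}, hence finitely generated over the larger ring $E$) and torsion-free over the domain $E$ (torsion-freeness because $M$ embeds into $\Loc(M) = K\otimes_R M$, on which $E_\veps \cong \FF\lpw{\veps_i}$ acts faithfully), is free.

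The main obstacle I anticipate is the purely ring-theoretic step in (b): pinning down $E$ as $\FF\pow{\eta}$ rather than merely "some $R$-order in $\FF\lpw{\veps^{1/c_i}}$." One clean route is to use Corollary~\ref{cor:hH-unique} together with the first main theorem's package: since $E/(\veps E)\cong \End_H(\Red(M))$ and $\Red(M)$ is indecomposable rigid, one could try to show $\dim_\FF E/(\veps E) = 1$, which would force $E/(\veps E) \cong \FF$ and then, since $E$ is $\veps$-adically complete (being a finite free $\FF\pow{\veps}$-module), $E$ would be generated by one element over $\FF\pow{\veps}$; combined with $\rk_R E = c_i$ one gets $E \cong \FF\pow{\eta}/(\text{relation})$ and the relation must be $\eta^{c_i} = \veps$ because $\Quot(E) = \FF\lpw{\veps^{1/c_i}}$. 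Alternatively, and perhaps more robustly, one invokes that the integral closure $\overline{R}$ of $R$ in $\FF\lpw{\veps^{1/c_i}}$ is the complete DVR $\FF\pow{\veps^{1/c_i}}$, finite over $R$, and that $E$, being a domain finite over the complete DVR $R$, is automatically contained in $\overline{R}$ and is itself a complete local ring; then rank considerations ($\rk_R E = c_i = \rk_R \overline{R}$) give $E = \overline{R} = \FF\pow{\eta}$. I would present the argument via whichever of these requires the least extra machinery, most likely the "$\dim_\FF E/(\veps E) = 1$" route since it ties directly into results already in the paper; the remaining verifications (torsion-freeness of $M$ over $E$, finite generation) are routine.
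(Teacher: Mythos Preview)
Parts (a) and (c) are fine and essentially match the paper. The problem is (b), where both routes you sketch have genuine gaps.

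Route 1 rests on a false claim: $\dim_\FF E/(\veps E)$ is $c_i$, not $1$. Since $E$ is free of rank $c_i$ over $R$, the quotient $E/(\veps E)$ is $c_i$-dimensional over $\FF$; in fact the paper eventually shows (as a \emph{consequence} of this proposition, in Corollary~\ref{cor:Hendo}) that $\End_H(\Red(M)) \cong \FF[\eps]/(\eps^{c_i})$. So for $c_i>1$ the endomorphism ring of $\Red(M)$ is never a field, and the argument collapses.

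Route 2 fails because rank alone does not force $E$ to coincide with the integral closure $\overline{R}=\FF\pow{\veps^{1/c_i}}$. For a concrete obstruction take $c_i=2$, write $\eta=\veps^{1/2}$, and set $E'=R+R\eta^3\subset\FF\pow{\eta}$. Then $E'$ is a complete local domain, free of rank $2$ over $R$, with $E'_\veps=\FF\lpw{\eta}$, yet $E'\subsetneq\FF\pow{\eta}$ (and $E'$ is not regular). So the properties you list---local, domain, free of the right rank, correct localization---do not single out $\FF\pow{\eta}$ among $R$-orders in $\FF\lpw{\eta}$.

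The missing idea, which the paper supplies, is a \emph{finite global dimension} bound on $E$. For non-projective $M$ one passes to $\tH$, takes the Bongartz-type complement $\Loc(N)$ of $\Loc(M)$ with $\Hom_\tH(\Loc(M),\Loc(N))=0$ (Lemma~\ref{lem:bong}), lifts to a tilting module $M\oplus N$ over $\hH$, and uses faithfulness of $\Loc$ to get $\Hom_\hH(M,N)=0$. Then
\[
\gldim(\End_\hH(M))\ \le\ \gldim(\End_\hH(M\oplus N))\ \le\ \gldim(\hH)+1\ \le\ 3,
\]
so $E$ is a one-dimensional complete local Noetherian domain of finite global dimension, hence regular, hence $E\cong\GG\pow{\eta}$ for its residue field $\GG$. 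A separate field-theoretic argument (comparing irreducibility of minimal polynomials over $\FF\lpw{\veps_i}$ and $\GG\lpw{\eta}$) is still needed to conclude $\GG=\FF$. Without a substitute for this regularity input, the identification $E\cong\FF\pow{\eta}$ does not go through.
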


\begin{proof}
Recall that $\Loc(M)\in\rep(\tH)$ is indecomposable rigid by
Lemma~\ref{lem:Locindec} and 
Proposition~\ref{prop:Loc-inj}.
Thus $\rkv_\hH(M)=\dimv_\tH(\Loc(M))$ is a real
Schur root. This shows (a).

In order to show (b) let us consider first the case when $M=\hH e_i$ is
an indecomposable  projective  $\hH$-module. 
In this case, we have
\[
\End_\hH(\hH e_i)\cong e_i\hH e_i\cong\FF\pow{\veps_i}.
\]

Next, let $M$ be non-projective.
As a consequence of Corollary~\ref{cor:Loc-bij}, 
$\Loc(M)\in\rep(\tH)$ is indecomposable rigid and non-projective. 
Let $N \in \rep_\lf(\hH)$ be rigid such that
$\Loc(N)$ is the Bongartz complement of $\Loc(M)$. 
In particular, $M \oplus N$ is a tilting $\hH$-module. 
(Here we used Proposition~\ref{prop:EcT-iso} and Corollary~\ref{cor:Loc-bij}.)
By Lemma~\ref{lem:bong}  we have 
$\Hom_\tH(\Loc(M),\Loc(N)) = 0$.
Since $\Loc$ is faithful by Corollary~\ref{cor:faithful},
this implies $\Hom_\hH(M,N)=0$.
We conclude that
\[
\gldim(\End_\hH(M)) \leq \gldim(\End_\hH(M\oplus N)) \leq \gldim(\hH)+1 \leq 3.
\]
Here the first inequality holds because
$\Hom_\hH(M,N)=0$.
The second inequality holds by \cite[Proposition~3.6.1]{CF04}, since
$M \oplus N \in \rep_\lf(\hH)$ is a tilting module. 
The last inequality holds by Proposition~\ref{prop:hH-pdim}(c). 
  
On the other hand,  $E := \End_\hH(M)$ is a local 
$R$-algebra, which is free of rank $c_i$ as an $R$-module,
see Lemma~\ref{lem:homfree}.
By Lemma~\ref{lem:functors3} we have
\[
E_\veps \cong K \otimes_R E \cong \End_\tH(\Loc(M)) \cong \FF\lpw{\veps_i}
\]
with $c_i = \bil{\rkv_\hH(M),\rk_\hH(M)}$.
In particular, $E$ is a  commutative integral domain.
Thus $E$ is a commutative complete local
$R$-algebra of Krull dimension $1$ with maximal ideal
$\mathfrak{m}$. 
Since $E/(\veps E)$ is a finite-dimensional local $\FF$-algebra, we conclude that the residue field 
\[
\GG := E/\mathfrak{m}
\] 
is a finite extension of $\FF$, say of degree $[\GG:\FF] = d$.
Since moreover $E$ has finite global dimension, we conclude that $E$ is regular. 
Thus there is an isomorphism 
\[
\phi\df E \to \GG\pow{\eta}
\]
of $\FF$-algebras , where $\eta$ is a variable,
see for example \cite[Theorem~19.12, Proposition~10.16]{Eis95}. 

Claim 1: $\GG = \FF$.

Proof: 
Let $\iota\df R \to E$ be the obvious embedding.
This turns $E$ into an $R$-algebra.
The composition
\[
R \xrightarrow{\iota} E \xrightarrow{\phi} \GG\pow{\eta}
\]
is $\FF$-linear, and it gives an embedding of $R$
into $\GG\pow{\eta}$.
In this way $\GG\pow{\eta}$ can be seen as an $R$-algebra,
and $\phi$ becomes an $R$-algebra isomorphism.

By abuse of notation we write $\veps$ for $\iota(\veps)$ and
for $(\phi \circ \iota)(\veps)$.
Since $\veps$ is not invertible in $\GG\pow{\eta}$, we have
\[
\veps \in \eta \GG\pow{\eta}.
\]
It follows that
\[
\GG\pow{\eta}_\veps = \GG\pow{\eta}_\eta = \GG\lpw{\eta}.
\]
In particular, we have an $\FF$-algebra isomorphism
\[
\FF\lpw{\veps_i} \to \GG\lpw{\eta}.
\]

Now let $\alpha \in \GG \setminus \FF$, and let 
$p \in \FF[X]$ be the minimum polynomial of $\alpha$
over $\FF$.
Then $p$ is reducible in $\GG[X]$ and therefore also reducible
in $\GG\lpw{\veps}[X]$.

Claim 2:
$p$ is irreducible in $\FF\lpw{\veps_i}[X]$.

Proof:
Let
\[
{\rm ev}_\alpha\df \FF\lpw{\veps_i}[X] \to \FF(\alpha)\lpw{\veps_i}
\]
be the $\FF\lpw{\veps_i}$-algebra homomorphism, which is defined
by $X \mapsto \alpha$.
Let $m := \deg(p)$.
Then $\FF\lpw{\veps_i} \subset \FF(\alpha)\lpw{\veps_i}$ is
a field extension of degree $m$.
Furthermore $(p) \subseteq \Ker({\rm ev}_\alpha)$.
Since ${\rm ev}_\alpha$ is surjective, we get 
$(p) = \Ker({\rm ev}_\alpha)$ for dimension reasons.
Thus $p$ must be irreducible in $\FF\lpw{\veps_i}[X]$.
This proves Claim 2.

We established that $p$ is irreducible over $\FF\lpw{\veps_i}$
and reducible over $\GG\lpw{\eta}$.
This is clearly a contradiction to the existence of the $\FF$-algebra isomorphism
\[
\FF\lpw{\veps_i} \to \GG\lpw{\eta}.
\]
It follows that $\FF = \GG$, which finally proves our Claim 1.

Thus we know that $\phi$ is after all an $R$-algebra isomorphism
\[
\phi\df E \to \FF\pow{\eta}.
\]
This finishes the proof of (b).

To show (c), we use that $E$ is a principal ideal domain,
which was established in (b). 
We also know that $M$ is free and finitely generated 
as an $R$-module.
Now, suppose that $M$ is not free as an $E$-module, then 
\[
{_E}M = {_E}M' \oplus {_E}M''
\]
with $M'$ free and $M'' \neq 0$ of finite length. 
By restricting the action of $E$ to $R$, the above decomposition
yields a direct sum decomposition
\[
{_R}M = {_R}M' \oplus {_R}M''
\]
with ${_R}M''$ non-zero and of finite length.
This contradiction shows (c).
\end{proof}

\begin{Cor}\label{cor:Hendo}
Let $M\in\rep_\lf(H)$ be indecomposable rigid. 
Then
\bit

\item[(a)]
$\End_H(M) \cong \FF[\eps_i]/(\eps_i^{c_i})$ with
$c_i = \bil{\rkv_H(M),\rkv_H(M)}$.

\item[(b)]
$M$ is free as an $\End_H(M)$-module.

\eit
\end{Cor}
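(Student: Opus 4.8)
The plan is to transport Proposition~\ref{prop:hHendo} down to $H$ along the reduction functor. By Proposition~\ref{prop:Red-bij} there is an indecomposable rigid $\widehat{M}\in\rep_\lf(\hH)$, unique up to isomorphism, with $\Red(\widehat{M})\cong M$; by Lemma~\ref{lem:functors1} one has $\rkv_H(M)=\rkv_\hH(\widehat{M})$, and by Corollary~\ref{cor:hH-unique} there is an isomorphism of $R$-algebras $\End_H(M)\cong E/(\veps E)$, where $E:=\End_\hH(\widehat{M})$. So the whole statement is reduced to understanding $E$, which is precisely the content of Proposition~\ref{prop:hHendo}: $E\cong\FF\pow{\eta}$ as an $R$-algebra, $\bil{\rkv_\hH(\widehat{M}),\rkv_\hH(\widehat{M})}=c_i$ for some $i\in I$, and $\widehat{M}$ is free as an $E$-module.

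For part (a) I would argue as follows. Under the isomorphism $E\cong\FF\pow{\eta}$ the image $g$ of $\veps$ is a nonzero non-unit: it is a non-unit because otherwise $\veps E=E$ would force $\End_H(M)\cong E/(\veps E)=0$, and it is nonzero because $\End_H(M)=\Hom_H(M,M)$ is finite-dimensional over $\FF$ while $E$ is not. Hence $g=u\eta^{m}$ with $u\in\FF\pow{\eta}$ a unit and $m\geq 1$, so $\End_H(M)\cong\FF\pow{\eta}/(\eta^{m})\cong\FF[\eps_i]/(\eps_i^{m})$. It then remains to identify $m$. Since $M$ is rigid and locally free, Proposition~\ref{prop:GLS1}(c) gives $\dim_\FF\End_H(M)=\bil{\rkv_H(M),\rkv_H(M)}$, while $\dim_\FF\FF\pow{\eta}/(\eta^m)=m$, so $m=\bil{\rkv_H(M),\rkv_H(M)}$. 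Finally, by Lemma~\ref{lem:functors1} and Proposition~\ref{prop:hHendo}(a), $\bil{\rkv_H(M),\rkv_H(M)}=\bil{\rkv_\hH(\widehat{M}),\rkv_\hH(\widehat{M})}=c_i$ for some $i\in I$. This proves (a).

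For part (b) I would use that $\widehat{M}$ is free over $E$, say $\widehat{M}\cong E^r$ as $E$-modules, by Proposition~\ref{prop:hHendo}(c). Since $\veps\in R\subseteq Z(E)$ and $\Red(\widehat{M})=\widehat{M}/(\veps\widehat{M})$, reduction modulo $\veps$ yields $M\cong E^r/(\veps E^r)\cong(E/(\veps E))^r$ as $E/(\veps E)$-modules. By functoriality of $\Red$ together with Corollary~\ref{cor:hH-unique}, the $\End_H(M)$-module structure on $M=\Red(\widehat{M})$ is exactly the $E$-action restricted along the quotient map $E\to E/(\veps E)\cong\End_H(M)$, so $M$ is free of rank $r$ over $\End_H(M)$.

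The argument is essentially bookkeeping, and there is no serious obstacle; the only point requiring care in (a) is to keep track of the $R$-algebra (rather than merely abstract $\FF$-algebra) structure of $E$, so that the dimension comparison via Proposition~\ref{prop:GLS1}(c) pins the exponent down to exactly $c_i$ rather than to some unidentified $m\geq 1$. Alternatively, one could deduce $m=c_i$ from the fact that $E$ is free of rank $c_i$ over $R$ (noted in the proof of Proposition~\ref{prop:hHendo}), since $\FF\pow{\eta}$ is then free over the image of $R$ of rank equal to the $\eta$-adic valuation of $g$.
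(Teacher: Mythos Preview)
Your proof is correct and follows essentially the same approach as the paper: lift $M$ to an indecomposable rigid $\widehat{M}\in\rep_\lf(\hH)$ via Proposition~\ref{prop:Red-bij}, invoke Proposition~\ref{prop:hHendo} for the structure of $E=\End_\hH(\widehat{M})$, and pass to $E/(\veps E)\cong\End_H(M)$ via Corollary~\ref{cor:hH-unique}. The only cosmetic difference is that the paper identifies the exponent $m$ by computing $\dim_\FF E/(\veps E)$ from the fact that $E$ is free of rank $c_i$ over $R$ (your ``alternative''), whereas you first compute it via Proposition~\ref{prop:GLS1}(c); both routes are equally short.
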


\begin{proof}
(a):
By Proposition~\ref{prop:Red-bij}
there is some indecomposable rigid $\hH$-module 
$\widehat{M} \in \rep_\lf(\hH)$ with 
\[
\Red(\widehat{M}) \cong M.
\]
Let $\widehat{E} := \End_\hH(\widehat{M})$.
By Lemma~\ref{lem:homfree}, $\widehat{E}$ is free as an $R$-module.
It follows from Lemmas~\ref{lem:rig-indec} and \ref{lem:hH-unique} that the rank of this free
$R$-module is $c_i = \bil{\rkv_\hH(\widehat{M}),\rkv_\hH(\widehat{M})}$.

This implies
\[
\dim_\FF \widehat{E}/(\veps \widehat{E}) = c_i.
\]
Since $\widehat{E}$ is isomorphic to a power series algebra by Proposition~\ref{prop:hHendo}(b), this implies
\[
\widehat{E}/(\veps \widehat{E}) \cong \FF[\eps_i]/(\eps_i^{c_i}).
\]
Set $E := \End_H(M)$.
By Corollary~\ref{cor:hH-unique} there is an isomorphism
\[
E \cong \widehat{E}/(\veps \widehat{E}).
\]
This finishes the proof of (a).

(b):
Since $\widehat{M}$ is free as an $\widehat{E}$-module by Proposition~\ref{prop:hHendo}(c), we get that
$M \cong \widehat{M}/(\veps \widehat{M})$ is free as a module
over $E \cong \widehat{E}/(\veps \widehat{E})$.
\end{proof}

Clearly, Corollary~\ref{cor:Hendo} implies  Theorem~\ref{thm:mainresult2}(b).

\subsection{{Proof of Theorem~\ref{thm:mainresult2}(c)}}\label{subsec:pf-c}
Let $M\in\rep_\lf(H)$ be indecomposable rigid, and let
 $E := \End_H(M)$.
By Corollary~\ref{cor:Hendo}(a) we have
\[
E \cong \FF[\veps]/(\veps^{c_i})
\] 
where $c_i = \bil{\rkv_H(M),\rkv_H(M)}$.
Furthermore, by Corollary~\ref{cor:Hendo}(b)
$M$ is free as an $E$-module.
This implies that the $j$th component $d_j$ of the dimension vector $\dimv(M/\rad_E(M))$ is related to the $j$th component $r_j$ of the rank vector $\rkv_H(M)$
by
\[
d_j = \frac{c_j}{c_i} r_j. 
\] 
We know that 
\[
\beta := \sum_j r_j\alpha_j \in \Delta_\rS(C,\Omega).
\]
Let $\widetilde{\beta}$ and $\widetilde{\alpha}_j$ denote the 
dual roots for $(C^T,\Omega)$, as in Section~\ref{sec:schurroots}.
We have 
\[
 \widetilde{\beta} = \frac{c\beta}{\langle\beta,\beta\rangle} = \frac{c\beta}{c_i},
\]
and similarly
\[
\widetilde{\alpha}_j = \frac{c\alpha_j}{c_j}, 
\]
hence
\[
\sum_j d_j \widetilde{\alpha}_j = \sum_j \frac{c_j}{c_i} r_j \frac{c\alpha_j}{c_j} = \frac{c}{c_i}\sum_jr_j\alpha_j=\widetilde{\beta}.
\]
Therefore $\dimv(M/\rad_E(M))$ is the dual of the Schur root $\rkv_H(M)$ expressed in the basis $(\widetilde{\alpha}_j)$.

By Demonet's Lemma~\ref{lem:demonet}
the $\tau$-rigid
$H$-modules are precisely the rigid locally free $H$-modules. 
As a consequence,
by the DIJ-correspondence \cite[Theorem~4.1]{DIJ17}
\begin{align*}
\{ \text{indec. $\tau$-rigids in $\rep(H)$} \}/\!\!\cong \;\;
&\longrightarrow \;\;
\{ \text{left finite bricks in $\rep(H)$} \}/\!\!\cong
\\
M &\mapsto M/\rad_E(M)
\end{align*}
the $H$-module $M/\rad_E(M)$ is a left finite brick, and all
left finite bricks are of this form.
For the definition of a left finite brick we refer to 
\cite[Section~1]{As18}.
Note that $M/\rad_E(M)$ is in general not locally free. 
This concludes the proof of  Theorem~\ref{thm:mainresult2}(c).


\section{Examples}\label{sec:examples}


\subsection{Type $B_3$}
The following example is discussed in \cite[Section~13.7]{GLS1}.
Let
\[
C =
\begin{pmatrix}
2 & -1 & 0\\
-1 & 2 & -1\\
0 & -2 & 2
\end{pmatrix}
\]
be a Cartan matrix of type $B_3$
with symmetrizer $D = \diag(2,2,1)$
and orientation $\Omega = \{ (1,2),(2,3) \}$.
The algebra $H = H_\FF(C,D,\Omega)$ is then
given by the quiver
\[
\xymatrix{
1 \ar@(ul,ur)^{\eps_1} & 2 \ar[l]^{\alpha_{12}}\ar@(ul,ur)^{\eps_2} & 3
\ar[l]^{\alpha_{23}}
}
\]
with relations $\eps_1^2 = \eps_2^2 = 0$ and
$\eps_1\alpha_{12} = \alpha_{12}\eps_2$.
The Auslander-Reiten quiver of 
$H$ is shown in Figure~\ref{Fig:B3}.
As vertices we have the graded dimension vectors (arising from the obvious $\Z$-covering of $H$) of the indecomposable 
$H$-modules. 
(The three modules on the leftmost column have to be identified
with the corresponding three modules on the rightmost column.)
The indecomposable rigid locally free $H$-modules are framed. 
The corresponding left finite bricks are colored in blue.
Thus the real Schur roots in $\Delta_\rS(C,\Omega)$ are
\begin{align*}
(1,1,1), && (0,1,1), && (0,0,1),
\\
(1,1,0), && (1,2,2), && (0,1,2),
\\
(1,0,0), && (0,1,0), && (1,1,2),
\end{align*}
and the corresponding real Schur roots in $\Delta_\rS(C^T,\Omega)$ are
\begin{align*}
(2,2,1), && (0,2,1), && (0,0,1),
\\
(1,1,0), && (1,2,1), && (0,1,1),
\\
(1,0,0), && (0,1,0), && (1,1,1).
\end{align*}
\begin{landscape}
{\tiny
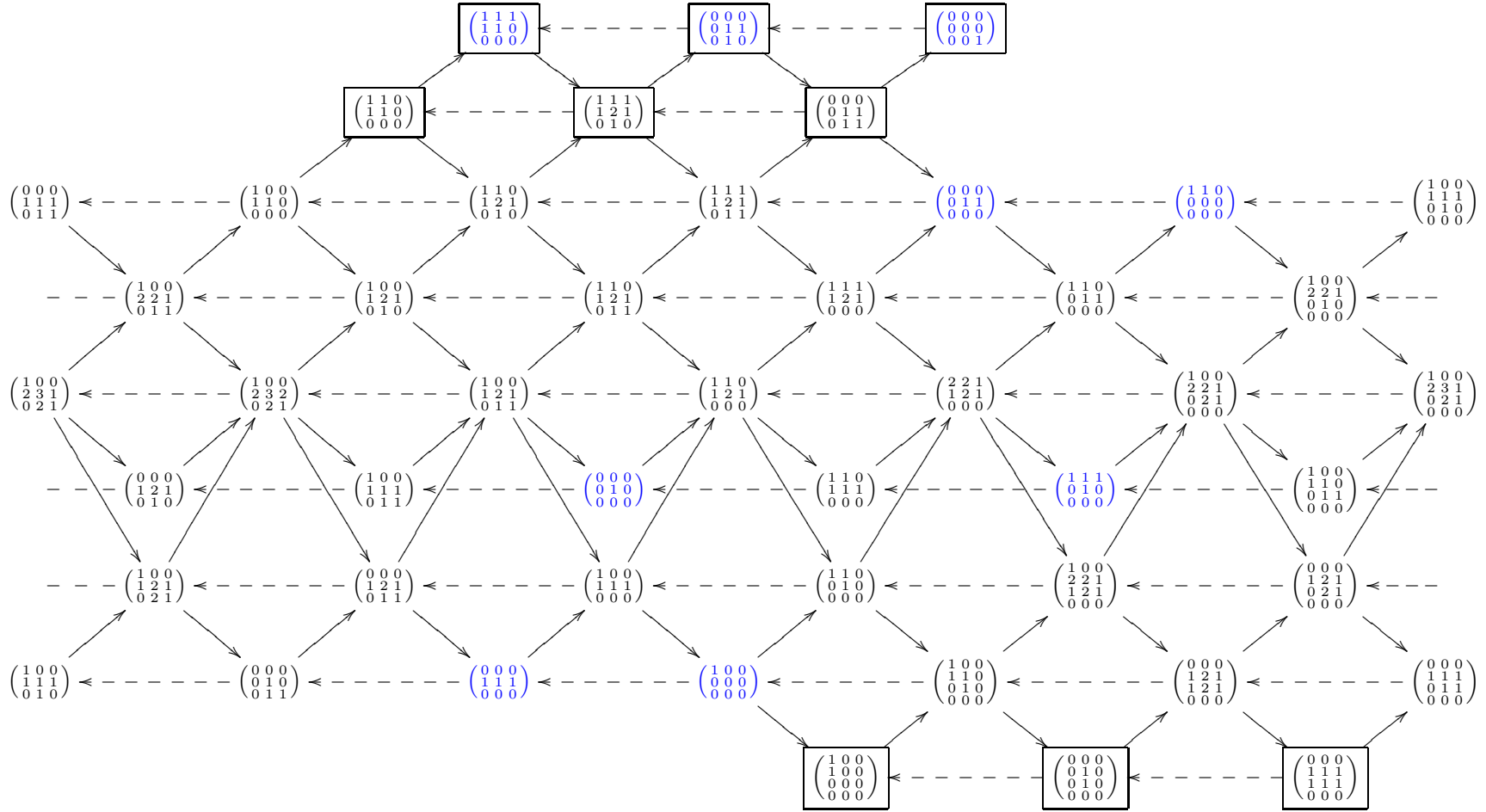
\begin{figure}[!htb]
\[
\xymatrix@-2.5ex{
&&&&*+[F]{\blue{\bsm 1&1&1\\1&1&0\\0&0&0 \esm}}\ar[dr]
&&*+[F]{\blue{\bsm 0&0&0\\0&1&1\\0&1&0 \esm}}\ar[dr]\ar@{-->}[ll]
&&*+[F]{\blue{\bsm 0&0&0\\0&0&0\\0&0&1 \esm}}\ar@{-->}[ll]
&&&&
\\
&&&*+[F]{\bsm 1&1&0\\1&1&0\\0&0&0 \esm}\ar[ur]\ar[dr]
&&*+[F]{\bsm 1&1&1\\1&2&1\\0&1&0 \esm}\ar[ur]\ar[dr]\ar@{-->}[ll]
&&*+[F]{\bsm 0&0&0\\0&1&1\\0&1&1 \esm}\ar@{-->}[ll]\ar[ur]\ar[dr]
\\
{\bsm 0&0&0\\1&1&1\\0&1&1 \esm}\ar[dr]
&&{\bsm 1&0&0\\1&1&0\\0&0&0 \esm}\ar@{-->}[ll]\ar[dr]\ar[ur]
&&{\bsm 1&1&0\\1&2&1\\0&1&0 \esm}\ar@{-->}[ll]\ar[dr]\ar[ur]
&&{\bsm 1&1&1\\1&2&1\\0&1&1 \esm}\ar@{-->}[ll]\ar[dr]\ar[ur]
&&{\blue{\bsm 0&0&0\\0&1&1\\0&0&0 \esm}}\ar@{-->}[ll]\ar[dr]
&&{\blue{\bsm 1&1&0\\0&0&0\\0&0&0 \esm}}\ar@{-->}[ll]\ar[dr]
&&{\bsm 1&0&0\\1&1&1\\0&1&0\\0&0&0 \esm}\ar@{-->}[ll]
\\
&{\bsm 1&0&0\\2&2&1\\0&1&1 \esm}\ar[ur]\ar[dr]\ar@{--}[l]
&&{\bsm 1&0&0\\1&2&1\\0&1&0 \esm}\ar[dr]\ar[ur]\ar@{-->}[ll]
&&{\bsm 1&1&0\\1&2&1\\0&1&1 \esm}\ar[ur]\ar[dr]\ar@{-->}[ll]
&&{\bsm 1&1&1\\1&2&1\\0&0&0 \esm}\ar[ur]\ar[dr]\ar@{-->}[ll]
&&{\bsm 1&1&0\\0&1&1\\0&0&0 \esm}\ar[ur]\ar[dr]\ar@{-->}[ll]
&&{\bsm 1&0&0\\2&2&1\\0&1&0\\0&0&0 \esm}\ar[ur]\ar[dr]
\ar@{-->}[ll]&\ar@{-->}[l]
\\
{\bsm 1&0&0\\2&3&1\\0&2&1 \esm}\ar[dr]\ar[ur]\ar[ddr]
&&{\bsm 1&0&0\\2&3&2\\0&2&1 \esm}\ar[dr]\ar[ur]\ar[ddr]\ar@{-->}[ll]
&&{\bsm 1&0&0\\1&2&1\\0&1&1 \esm}\ar[dr]\ar[ur]\ar[ddr]\ar@{-->}[ll]
&&{\bsm 1&1&0\\1&2&1\\0&0&0 \esm}\ar[dr]\ar[ur]\ar[ddr]\ar@{-->}[ll]
&&{\bsm 2&2&1\\1&2&1\\0&0&0 \esm}\ar[dr]\ar[ur]\ar[ddr]\ar@{-->}[ll]
&&{\bsm 1&0&0\\2&2&1\\0&2&1\\0&0&0 \esm} \ar[dr]\ar[ur]\ar[ddr]\ar@{-->}[ll]
&&{\bsm 1&0&0\\2&3&1\\0&2&1\\0&0&0 \esm}\ar@{-->}[ll]
\\
&{\bsm 0&0&0\\1&2&1\\0&1&0 \esm}\ar[ur]\ar@{--}[l]
&&{\bsm 1&0&0\\1&1&1\\0&1&1 \esm}\ar[ur]\ar@{-->}[ll]
&&{\blue{\bsm 0&0&0\\0&1&0\\0&0&0 \esm}}\ar[ur]\ar@{-->}[ll]
&&{\bsm 1&1&0\\1&1&1\\0&0&0 \esm}\ar[ur]\ar@{-->}[ll]
&&{\blue{\bsm 1&1&1\\0&1&0\\0&0&0 \esm}}\ar[ur]\ar@{-->}[ll]
&&{\bsm 1&0&0\\1&1&0\\0&1&1\\0&0&0 \esm}\ar[ur]\ar@{-->}[ll]
&\ar@{-->}[l]
\\
&{\bsm 1&0&0\\1&2&1\\0&2&1 \esm}\ar[uur]\ar[dr]\ar@{--}[l]
&&{\bsm 0&0&0\\1&2&1\\0&1&1 \esm}\ar[uur]\ar[dr]\ar@{-->}[ll]
&&{\bsm 1&0&0\\1&1&1\\0&0&0 \esm}\ar[uur]\ar[dr]\ar@{-->}[ll]
&&{\bsm 1&1&0\\0&1&0\\0&0&0 \esm}\ar[uur]\ar[dr]\ar@{-->}[ll]
&&{\bsm 1&0&0\\2&2&1\\1&2&1\\0&0&0 \esm}\ar[uur]\ar[dr]
\ar@{-->}[ll]
&&{\bsm 0&0&0\\1&2&1\\0&2&1\\0&0&0 \esm}\ar[uur]\ar[dr]
\ar@{-->}[ll]&\ar@{-->}[l]
\\
{\bsm 1&0&0\\1&1&1\\0&1&0 \esm}\ar[ur]
&&{\bsm 0&0&0\\0&1&0\\0&1&1 \esm}\ar[ur]\ar@{-->}[ll]
&&{\blue{\bsm 0&0&0\\1&1&1\\0&0&0 \esm}}\ar[ur]\ar@{-->}[ll]
&&{\blue{\bsm 1&0&0\\0&0&0\\0&0&0 \esm}}\ar[ur]\ar[dr]\ar@{-->}[ll]
&&{\bsm 1&0&0\\1&1&0\\0&1&0\\0&0&0 \esm}\ar[ur]\ar[dr]
\ar@{-->}[ll]
&&{\bsm 0&0&0\\1&2&1\\1&2&1\\0&0&0 \esm}\ar[ur]\ar[dr]
\ar@{-->}[ll]
&&{\bsm 0&0&0\\1&1&1\\0&1&1\\0&0&0 \esm}\ar@{-->}[ll]
\\
&&&&&&&
*+[F]{\bsm 1&0&0\\1&0&0\\0&0&0\\0&0&0 \esm}\ar[ur]
&&*+[F]{\bsm 0&0&0\\0&1&0\\0&1&0\\0&0&0 \esm}\ar[ur]\ar@{-->}[ll]
&&*+[F]{\bsm 0&0&0\\1&1&1\\1&1&1\\0&0&0 \esm}\ar[ur]\ar@{-->}[ll]
&
}
\] 
\caption{
The Auslander-Reiten quiver of $H_\FF(C,D,\Omega)$ of type $B_3$ with
$D$ minimal.}
\label{Fig:B3}
\end{figure}
}
\end{landscape}

\subsection{Type $\widetilde{C}_2$}
Let
\[
C =
\begin{pmatrix}
2 & -1 & 0\\
-2 & 2 & -2\\
0 & -1 & 2
\end{pmatrix}
\]
be a Cartan matrix of type $\widetilde{C}_2$
with symmetrizer $D = \diag(2,1,2)$
and orientation $\Omega = \{ (1,2),(2,3) \}$.
The algebra $H = H_\FF(C,D,\Omega)$ is then
given by the quiver
\[
\xymatrix{
1 \ar@(ul,ur)^{\eps_1} & 2 \ar[l]^{\alpha_{12}} & 3
\ar[l]^{\alpha_{23}}\ar@(ul,ur)^{\eps_3}
}
\]
with relations $\eps_1^2 = \eps_3^2 = 0$.
Thus $H$ is a representation-infinite gentle algebra.
Thus $H$ is a string algebra in the sense of \cite{BR87}.
For each string $C$ let $M(C)$ be the corresponding string module,
see \cite{BR87} for detailed definitions.
For $i = 1,2,3$ let $1_i$ be the string of length $0$ associated with
$i$.
(Then $M(1_i)$ is the simple $H$-module $S_i$.)

Let $P(i)$ (resp. $I(i)$) be the indecomposable projective (resp. injective) $H$-modules associated to the vertex $i \in \{ 1,2,3 \}$.
Up to isomorphism, we have then 
\begin{align*}
P(1) &= M(p_1), &
P(2) &= M(p_2), &
P(3) &= M(p_3),
\\
I(1) &= M(q_1), &
I(2) &= M(q_2), &
I(3) &= M(q_3),
\end{align*}
where 
\begin{align*}
p_1 &:= \eps_1, &
p_2 &:= \eps_1\alpha_{12}, &
p_3 &:= \eps_1\alpha_{12}\alpha_{23}\eps_3\alpha_{23}^{-1}
\alpha_{12}^{-1}\eps_1^{-1},
\\
q_1 &:= \eps_3^{-1}\alpha_{23}^{-1}\alpha_{12}^{-1}\eps_1
\alpha_{12}\alpha_{23}\eps_3, &
q_2 &= \alpha_{23}\eps_3, &
q_3 &= \eps_3.
\end{align*}
We get
\begin{align*}
\rkv_H(P(1)) &= (1,0,0), &
\rkv_H(P(2)) &= (1,1,0), &
\rkv_H(P(3)) &= (2,2,1),
\\
\rkv_H(I(1)) &= (1,2,2), &
\rkv_H(I(2)) &= (0,1,1), &
\rkv_H(I(3)) &= (0,0,1).
\end{align*}

Now let 
\begin{align*}
h_1 &:= \alpha_{12}, &
h_2 &:= \alpha_{23}\eps_3^{-1}\alpha_{23}^{-1}\alpha_{12}^{-1}\eps_1^{-1},
\\
c_2 &:=
\alpha_{12}^{-1}\eps_1\alpha_{12}\alpha_{23}\eps_3,
& 
c_3 &:= \alpha_{23}^{-1}.
\end{align*}
Then $h_1$ and $h_2$ are hooks and $c_2$ and $c_3$ are
cohooks in the sense of \cite{BR87}.
By \cite{BR87} we have
\begin{align*}
\tau^{-2n}(P(2)) &= M((h_1h_2)^{-n}p_2(h_2h_1)^n),
\\
\tau^{-2n-1}(P(2)) &= M(h_1^{-1}(h_1h_2)^{-n}p_2(h_2h_1)^nh_2),
\\
\tau^{-2n}(P(i)) &= M((h_1h_2)^{-n}p_i(h_1h_2)^n),
\\
\tau^{-2n-1}(P(i)) &= M(h_1^{-1}(h_1h_2)^{-n}p_i(h_1h_2)^nh_1),
\\
\tau^{2n}(I(2)) &= M((c_2c_3)^{-n}q_2(c_3c_2)^n),
\\
\tau^{2n+1}(I(2)) &= M(c_2^{-1}(c_2c_3)^{-n}q_2(c_3c_2)^nc_3),
\\
\tau^{2n}(I(i)) &= M((c_3c_2)^{-n}q_i(c_3c_2)^n),
\\
\tau^{2n+1}(I(i)) &= M(c_3^{-1}(c_3c_2)^{-n}q_i(c_3c_2)^nc_3)
\end{align*}
for $i = 1,3$ and $n \ge 0$.
Furthermore, set
\begin{align*}
R_1 &:= M(1_2), &
R_2 &:= M(\eps_1\alpha_{12}\alpha_{23}\eps_3).
\end{align*}
The modules $\tau^{-n}(P(i))$ and $\tau^n(I(i))$
for $i \in \{1,2,3\}$ and $n \ge 0$ are the preprojective resp. 
preinjective $H$-modules as defined in \cite[Section~1.5]{GLS1}.
The modules $R_1$ and $R_2$ form the bottom of a tube of
rank $2$ in the Auslander-Reiten quiver of $H$.

Ricke \cite{R16} showed that the modules
$\tau^{n}(I(i))$ and $\tau^{-n}(P(i))$ with $i \in \{1,2,3\}$ and 
$n\ge 0$ together with
$R_1$ and $R_2$ form a complete set of representatives
of isoclasses of indecomposable rigid $H$-modules, and that
all of these are locally free.

Define
\begin{align*}
p_1' &:= 1_1, &
p_3' &:= \alpha_{23}^{-1}
\alpha_{12}^{-1}\eps_1^{-1},
\\
q_1' &:= \alpha_{12}\alpha_{23}\eps_3, &
q_3' &:= 1_3.
\end{align*}

Under the DIJ-correspondence \cite[Theorem~4.1]{DIJ17}
we get
\begin{align*}
\tau^{-n}(P(2)) &\mapsto \tau^{-n}(P(2)), 
\\
\tau^{-2n}(P(i)) &\mapsto M(p_i'(h_1h_2)^n),
\\
\tau^{-2n-1}(P(i)) &\mapsto M(p_i'(h_1h_2)^nh_1),
\\
\tau^n(I(2)) &\mapsto \tau^n(I(2)),
\\
\tau^{2n}(I(i)) &\mapsto M(q_i'(c_3c_2)^n),
\\
\tau^{2n+1}(I(i)) &\mapsto M(q_i'(c_3c_2)^nc_3),
\\
R_1 &\mapsto R_1,
\\
R_2 &\mapsto R_2
\end{align*}
for $i = 1,3$ and $n \ge 0$.

Now it is easy to compute the rank vectors of the indecomposable
$\tau$-rigids and the corresponding dimension vectors of the 
left finite bricks.
For example, for $n \ge 0$ we have
\[
\alpha(n) :=
\rkv_H(\tau^{-2n-1}(P(3))) = 
(2,2,1) + n(2,4,2) + (0,2,0).
\]
For the
corresponding left finite bricks we get the expected dimension
vector
\[
\widetilde{\alpha(n)} = (2,1,1) + n(2,2,2) + (0,1,0).
\]
(We know from \cite{GLS1} that 
\[
\bil{\rkv_H(\tau^{-n}(P(i))),\rkv_H(\tau^{-n}(P(i)))} = 
\bil{\rkv_H(\tau^n(I(i))),\rkv_H(\tau^n(I(i)))} 
= c_i
\]
for all $i \in \{1,2,3\}$ and $n \ge 0$. 
This makes it easy to calculate
$\widetilde{\alpha(n)}$.)

Note that 
there is a ${\mathbb P}^1(\FF)$-parameter
family of bricks in $\rep(H)$ which are not left finite.
This phenomenon occurs for example also for the Kronecker quiver.

\bigskip
{\parindent0cm \bf Acknowledgements.}\,
The first named author acknowledges partial support from 
CoNaCyT grant no. 239255, and he thanks the Max-Planck Institute for
Mathematics in Bonn for one year of hospitality in 2017/18.
The third author thanks the SFB/Transregio TR 45 for 
financial support. 
We thank Laurent Demonet, Lidia Angeleri H\"ugel and Henning Krause for helpful 
discussions and for providing useful references.



\begin{thebibliography}{99999}

\bibitem[AIR14]{AIR14}
T. Adachi, O. Iyama, I. Reiten:
\emph{$\tau$-tilting theory}.
Compos. Math. 150 (2014), no. 3, 415--452. 
  
\bibitem[As18]{As18}
S. Asai:
\emph{Semibricks}, to appear in IMRN,
arXiv:1610.05860v5 [math.RT].

\bibitem[Aus55]{Aus55}
M. Auslander:
\emph{On the dimension of modules and algebras. III. Global dimension}. 
Nagoya Math. J. 9 (1955), 67--77. 

\bibitem[Aus78]{Aus78}
M. Auslander:
\emph{Functors and morphisms determined by objects}.
In: Representation Theory of Algebras.
(Proc. Conf., Temple Univ., Philadelphia, Pa., 1976),
Lecture Notes in Pure Appl. Math. 37.
Marcel Dekker, New York (1978), 1--244. Also in: Selected Works of Maurice
Auslander, Amer. Math. Soc. (1999).

\bibitem[B81]{B81}
K. Bongartz:
\emph{Tilted algebras}. 
In: Representations of algebras (Puebla, 1980), pp. 26--38, Lecture
Notes in Math., 903, Springer, Berlin-New York, 1981.

\bibitem[BR87]{BR87}
M.C.R. Butler, C.M. Ringel:
\emph{Auslander-Reiten sequences with few middle terms and applications to string algebras}. 
Comm. Algebra 15 (1987), no. 1-2, 145--179. 

\bibitem[CK06]{CK06}
P. Caldero, B. Keller: 
\emph{From triangulated categories to cluster algebras II}. 
Ann. Sci. {\'E}cole Norm. Sup. (4) 39 (2006), 983--1009.

\bibitem[CF04]{CF04}
R. R. Colby, K.R. Fuller:
\emph{Equivalence and duality for module categories.
With tilting and cotilting for rings}.
Cambridge Tracts in Mathematics, 161. Cambridge University Press,
Cambridge, 2004. x+152 pp.

\bibitem[CB93]{CB93}
W. Crawley-Boevey:
\emph{Exceptional sequences of representations of quivers}. In: Representations of algebras (Ottawa, ON, 1992), 117--124, CMS Conf. Proc., 14, Amer. Math. Soc., Providence, RI, 1993.

\bibitem[CB96]{CB96}
W. Crawley-Boevey:
\emph{Rigid integral representations of quivers}.
Representation theory of algebras (Cocoyoc, 1994), 155--163, CMS Conf. Proc., 18, Amer. Math. Soc., Providence, RI, 1996.

\bibitem[CR81]{CR81}
C.W. Curtis, I. Reiner: 
\emph{Methods of representation theory. Vol. I. With applications to finite groups and orders}. 
Reprint of the 1981 original. Wiley Classics Library. A Wiley-Interscience Publication. John Wiley \& Sons, Inc., New York, 1990. xxiv+819 pp.

\bibitem[DIJ17]{DIJ17}
L. Demonet, O. Iyama, G. Jasso:
\emph{$\tau$-tilting finite algebras, bricks, and g-vectors}.
Int. Math. Res. Not. IMRN 2019, no. 3, 852--892.

\bibitem[D18]{D18}
L. Demonet:
Private communication via Email (2018).

\bibitem[Eis95]{Eis95}
D. Eisenbud:
\emph{Commutative Algebra with a view toward Algebraic Geometry}.
Springer Graduate Texts in Mathematics 150
Springer-Verlag, New York, 1995. xvi+785 pp.

\bibitem[F12]{F12}
A. Facchini: 
\emph{Module theory. Endomorphism rings and direct sum decompositions in some classes of modules}. 
[2012 reprint of the 1998 original]. Modern Birkh\"auser Classics. Birkh\"auser/Springer Basel AG, Basel, 1998. xiv+285 pp.

\bibitem[GLS17]{GLS1}
Ch. Gei{\ss}, B. Leclerc, J. Schr\"oer:
\emph{Quivers with relations for symmetrizable Cartan matrices I: Foundations}. 
Invent. Math. 209 (2017), no. 1, 61--158.

\bibitem[GLS18]{GLS2}
Ch. Gei{\ss}, B. Leclerc, J. Schr\"oer:
\emph{Quivers with relations for symmetrizable Cartan matrices II: 
change of symmetrizers}.
Int. Math. Res. Not. IMRN 2018, no. 9, 2866-2898.

\bibitem[Hu]{Hu}
A. Hubery, 
\emph{Acyclic Cluster Algebras via Ringel-Hall Algebras}.
Preprint (unpublished).

\bibitem[Hu11]{Hu11}
A. Hubery:
\emph{The cluster complex of an hereditary artin algebra}.
Algebr. Represent. Theory 14 (2011), no. 6, 1163--1185.

\bibitem[HuK16]{HuK16}
A. Hubery, H. Krause:
\emph{A categorification of non-crossing partitions}.
J. Eur. Math. Soc. (JEMS) 18 (2016), no. 10, 2273--2313.

\bibitem[IR14]{IR14}
O. Iyama, I. Reiten:
\emph{Introduction to $\tau$-tilting theory}.
Proc. Natl. Acad. Sci. USA 111 (2014), no. 27, 9704--9711.

\bibitem[R16]{R16}
C. Ricke:
\emph{On tau-Tilting Theory and Perpendicular Categories}.
PhD Thesis, University of Bonn (2016).

\bibitem[Rin94]{Rin94}
C.M. Ringel:
\emph{The braid group action on the set of exceptional sequences of a hereditary Artin algebra}. 
In: Abelian group theory and related topics (Oberwolfach, 1993), 339--352, 
Contemp. Math., 171, Amer. Math. Soc., Providence, RI, 1994. 
 
\bibitem[Ru15]{Ru15}
D. Rupel:
\emph{Quantum cluster characters for valued quivers}. 
Trans. Amer. Math. Soc. 367 (2015), no. 10, 7061--7102.

\end{thebibliography}
\end{document}